\theoremstyle{plain}
\newcommand{\cc}{\mathcal}
\newcommand{\bb}{\mathbb}
\newcommand{\ra}[1]{\overrightarrow{#1}}
\newcommand{\la}[1]{\overleftarrow{#1}}
\newcommand{\lam}[1]{\lambda\left( {#1} \right)}
\declaretheorem{theorem}
\numberwithin{theorem}{section}
\newtheorem{lemma}[theorem]{Lemma}  
\newtheorem{cor}[theorem]{Corollary}
\newtheorem{defn}[theorem]{Definition}
\newtheorem{claim}[theorem]{Claim}
\newtheorem*{remark}{Remark}
\newcommand{\N}{\mathbb{N}}
\author{Joshua Erde\thanks{The author was supported by the Alexander von Humboldt Foundation.} }
\title{A unified treatment of linked and lean tree-decompositions}   
\date{}
\begin{document}
\maketitle

\begin{abstract}
There are many results asserting the existence of tree-decompositions of minimal width which still represent local connectivity properties of the underlying graph, perhaps the best-known being Thomas' theorem that proves for every graph $G$ the existence of a linked tree-decompositon of width tw$(G)$. We prove a general theorem on the existence of linked and lean tree-decompositions, providing a unifying proof of many known results in the field, as well as implying some new results. In particular we prove that every matroid $M$ admits a lean tree-decomposition of width tw$(M)$, generalizing the result of Thomas.
\end{abstract}

\section{Introduction}
Given a graph $G$ a \emph{tree-decomposition} is a pair $(T,\cc{V})$ consisting of a tree $T$, together with a collection of subsets of vertices $\cc{V} = \{ V_t \subseteq V(G) \, : \, t \in V(T)\}$, called \emph{bags}, such that: 
\begin{itemize}
\item $V(G) = \bigcup_{t \in T} V_t$;
\item For every edge $e \in E(G)$ there is a $t$ such that $e$ lies in $V_t$;
\item $V_{t_1} \cap V_{t_3} \subseteq V_{t_2}$ whenever $t_2 \in t_1 T t_3$.
\end{itemize}

The \emph{width} of this tree-decomposition is the number $\max \{ |V_t| -1 \, : \, t \in V(T) \}$ and its \emph{adhesion} is $\max \{ |V_t \cap V_{t'}| \, :\, (t,t') \in E(T) \}$. Given a graph $G$ its \emph{tree-width} tw$(G)$ is the smallest $k$ such that $G$ has a tree-decomposition of width $k$. 

\begin{defn}\label{d:link}
A tree decomposition $(T,\cc{V})$ is called \emph{linked} if for all $k \in \bb{N}$ and every $t,t' \in T$, either $G$ contains $k$ disjoint $V_{t}$-$V_{t'}$ paths or there is an $s \in tTt'$ such that $|V_s|<k$.
\end{defn}
Robertson and Seymour showed the following:

\begin{theorem}[Robertson and Seymour \cite{RS86}]\label{t:badlink}
Every graph admits a linked tree-decomposition of width $<3.2^{\text{tw}(G)}$.
\end{theorem}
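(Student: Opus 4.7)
The plan is the classical ``extremal tree-decomposition'' strategy. Set $k = \mathrm{tw}(G)$ and $w = 3 \cdot 2^k - 1$. The first step is to produce \emph{any} tree-decomposition of $G$ of width at most $w$: this is where the factor $3 \cdot 2^k$ will enter the bound, since one obtains such a decomposition by starting from a width-$k$ decomposition and enlarging each bag to record, for each interface, enough combinatorial data (connectivity types of interface vertices inside the associated subgraph, of which there are at most $\sim 2^{k+1}$) to control the subsequent surgery. I would organise this enlargement so that the resulting bags all have size at most $w + 1 = 3 \cdot 2^k$.

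Next, among all tree-decompositions of $G$ of width at most $w$, I would pick one, say $(T, \mathcal{V})$, that is extremal with respect to a well-founded measure such as the lexicographically least non-increasing sequence of bag sizes (equivalently, one minimising $\sum_{t \in V(T)} (w+2)^{|V_t|}$). The claim is that any such extremal $(T, \mathcal{V})$ is linked. Suppose it is not: then there exist $t, t' \in V(T)$ and $\ell \in \mathbb{N}$ such that $|V_s| \geq \ell$ for every $s \in tTt'$ while $G$ admits fewer than $\ell$ disjoint $V_t$--$V_{t'}$ paths. Choose such a witness minimising $|tTt'|$; Menger's theorem yields a separation $(A, B)$ of $G$ of order strictly less than $\ell$ with $V_t \subseteq A$ and $V_{t'} \subseteq B$ and separator $S := A \cap B$.

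The core step is a surgery that rewrites $(T, \mathcal{V})$ using $S$ to a strictly smaller tree-decomposition still of width $\leq w$. Pick an edge $s_1 s_2$ on $tTt'$ and split $T$ along it into components $T_1 \ni t$ and $T_2 \ni t'$; redefine bags via
\[
V_r' := \begin{cases}(V_r \cap A) \cup S & \text{if } r \in T_1,\\ (V_r \cap B) \cup S & \text{if } r \in T_2.\end{cases}
\]
The vertex- and edge-cover axioms follow from $A \cup B = V(G)$ and the fact that no edge of $G$ crosses $S$; the subtree axiom is inherited from $(T, \mathcal{V})$ once one checks that every vertex now sits on a single side. Since $|S| < \ell$ and every bag along the original path had size at least $\ell$, the new bags along the path all shrink, while off-path bags are unaffected in size (only vertices already outside $S$ can be lost). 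This produces a decomposition strictly smaller in the extremal measure, contradicting the choice of $(T, \mathcal{V})$.

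The main obstacle I expect is the subtree axiom after surgery: a vertex $v \in A \setminus B$ whose original bags all lay in $T_2$ would disappear from the new decomposition, breaking vertex-coverage. Dealing with this either requires first ``smoothing'' the decomposition so that every vertex appears on both sides of the cut edge, or choosing the separation $(A, B)$ to respect the tree structure at $s_1 s_2$ by forcing the adhesion $V_{s_1} \cap V_{s_2}$ to sit inside $S \cup (V_{s_1} \cap V_{s_2})$. The quantitative strength of the theorem — and in particular the exponential bound $3 \cdot 2^{\mathrm{tw}(G)}$ — comes exactly from having enough slack in the initial bags so that this compatibility can always be arranged; identifying the right enlargement of a width-$k$ decomposition that survives the surgery is the genuinely delicate part of the argument, and is the step I would spend the most effort on.
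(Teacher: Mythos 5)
This statement is quoted from Robertson and Seymour and is not proved in the paper; the paper instead proves the stronger fact (Theorem \ref{t:thomlink}, and its generalisations Theorems \ref{t:link} and \ref{t:lean}) that width $\mathrm{tw}(G)$ suffices, by an extremal-plus-surgery argument of exactly the kind you are attempting. Measured against that argument, your proposal has concrete gaps. First, the surgery is wrong as written. With $V_r' = (V_r\cap A)\cup S$ on $T_1$ and $(V_r\cap B)\cup S$ on $T_2$, off-path bags are \emph{not} ``unaffected in size'': every bag gains all of $S$, so a bag disjoint from $S$ grows by up to $\ell-1$ and the width bound $w$ can be violated; on-path bags need not shrink either (if $V_r\subseteq A$ and $r\in T_1$ then $V_r'=V_r\cup S\supseteq V_r$), so the claimed strict decrease of the measure is unjustified; and, as you note yourself, vertex coverage fails for vertices of $A\setminus B$ whose subtree lies in $T_2$. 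The standard repair is not ``smoothing'' or constraining $(A,B)$ by the adhesion set: one takes \emph{two copies of the whole tree}, one relativised to the $A$-side and one to the $B$-side, joined by a new edge whose adhesion is $S$, and one adds to each bag not all of $S$ but only the points where the $<\ell$ Menger paths meet that bag (Bellenbaum--Diestel), which is what guarantees both the width bound and the decrease of the measure; in the language of this paper this is precisely shifting the $S_k$-tree onto $(X,Y)$ and onto $(Y,X)$ and gluing the two shifts (proofs of Theorems \ref{t:link} and \ref{t:lean}). Relatedly, your extremal measure is shaky: minimising $\sum_t (w+2)^{|V_t|}$ is not equivalent to the lexicographic order on the bag-size sequence, and under the correct two-copy surgery the number of parts doubles, so such a sum can increase even when all large bags improve; one needs a top-down lexicographic measure (Bellenbaum--Diestel's fatness, or the $e(T_p)$, $c(T_p)$ ordering used here) in which a proliferation of small parts is harmless.

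Second, the quantitative part of your plan is never actually carried out: the initial ``enlargement recording connectivity types of interface vertices'' is not specified, and the claim that the exponential slack is what makes the surgery possible is misleading --- in the correct extremal argument the surgery does not increase width at all, which is exactly why Thomas obtains width $\mathrm{tw}(G)$ and why the $3\cdot 2^{\mathrm{tw}(G)}$ bound is only of historical interest (Robertson and Seymour's original proof obtains it by a genuinely different, non-extremal construction). So as it stands your argument proves neither the stated bound nor the stronger one: fix the surgery and the measure as above and you get Theorem \ref{t:thomlink}, of which the present statement is an immediate weakening.
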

This result was an essential part of their proof that for any $k$ the set of graphs of tree-width less than $k$ is well quasi ordered by the minor relation. Thomas gave a new proof of Theorem \ref{t:badlink}, improving the bound on the tree-width of the linked tree-decomposition from $3.2^{\text{tw}(G)} - 1$ to the best possible value of tw$(G)$.

\begin{theorem}[Thomas \cite{T90}]\label{t:thomlink}
Every graph $G$ admits a linked tree-decomposition of width $\text{tw}(G)$.
\end{theorem}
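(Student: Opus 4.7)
The plan is a lexicographic minimization argument due to Thomas. Assign to each tree-decomposition $(T,\cc{V})$ of $G$ its \emph{type}, namely the non-increasing rearrangement of $(|V_t|)_{t \in V(T)}$, and order types lexicographically. By restricting to decompositions in which no bag is contained in an adjacent bag (something one can always arrange by contracting offending edges), $T$ is forced to be finite and the possible types form a well-ordered set. Fix $(T,\cc{V})$ of width $\text{tw}(G)$ whose type is lex-minimum.

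Assume for contradiction that $(T,\cc{V})$ is not linked: there exist $t, t' \in V(T)$ and $k \in \bb{N}$ such that $G$ contains no $k$ pairwise disjoint $V_{t}$-$V_{t'}$ paths, while $|V_s| \geq k$ for every $s$ on the path $P = tTt'$. By Menger's theorem, there is a separator $S \subseteq V(G)$ of $V_t$ from $V_{t'}$ with $|S| < k$; write $V(G) = A \cup B$ with $A \cap B = S$, $V_t \subseteq A$, $V_{t'} \subseteq B$, and no $G$-edges between $A \setminus S$ and $B \setminus S$.

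I would then construct a tree-decomposition $(T^*, \cc{V}^*)$ of width at most $\text{tw}(G)$ with strictly smaller type, contradicting the choice of $(T,\cc{V})$. The construction reroutes the path $P$ so that the ``adhesion'' between its two halves becomes $S$ (of size $<k$) rather than any of the too-large intersections of adjacent bags on $P$. One natural recipe: pick an edge $uv \in P$; on the $u$-side of $uv$, redefine each bag $V_s$ as $(V_s \cap A) \cup S^*_s$, where $S^*_s \subseteq S$ collects those vertices of $S$ needed in $V_s$ to maintain the subtree property; do the symmetric thing on the $v$-side; and reattach off-path branches of $T$ to whichever side contains their vertices modulo $S$. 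Validity of $(T^*, \cc{V}^*)$ reduces to checking that $S$ is a genuine separator in $G$, so that no edge crosses between $A \setminus S$ and $B \setminus S$, together with the subtree condition on the spliced tree.

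The main obstacle is the quantitative step of showing that the type strictly decreases. Each bag $V_s$ on $P$, of size $\geq k$, is replaced by bags of sizes at most $|V_s \cap A| + |S|$ and $|V_s \cap B| + |S|$; since $|S| < k$, these are strictly smaller than $|V_s|$ exactly when $V_s$ has enough vertices on the opposite side of $S$. Forcing this to hold uniformly enough to produce a strict lexicographic improvement requires careful choices — such as selecting the splitting edge $uv$ where $|V_u \cap V_v|$ is maximized along $P$, or refining the lex-minimization by breaking ties with adhesion sizes — and is the technical heart of Thomas's argument. The resulting contradiction with minimality of $(T,\cc{V})$ proves the theorem.
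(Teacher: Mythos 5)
You have the right skeleton (lex-minimal decomposition, Menger separator of order $<k$, rebuild to contradict minimality), but the proof is not actually there: the step you yourself flag as ``the technical heart'' --- constructing the new decomposition and proving that its width stays at most $\text{tw}(G)$ while its type strictly decreases --- is exactly the content of the theorem, and your sketch of it would not work as written. Your recipe edits bags only along the path $P=tTt'$, adds to each such bag some subset $S^*_s \subseteq S$ ``needed to maintain the subtree property'', and reattaches off-path branches ``to whichever side contains their vertices modulo $S$''. Three concrete problems: (i) an off-path branch can contain vertices of both $A\setminus S$ and $B\setminus S$, so the reattachment rule is not well defined and the separation property of the spliced tree fails; (ii) adding up to $|S|$ vertices to $V_s\cap A$ gives no bound better than $|V_s\cap A|+|S|$, which can exceed $|V_s|$ and even exceed $\text{tw}(G)+1$, so neither the width bound nor the type decrease follows; (iii) with only these local edits there is no mechanism forcing a \emph{strict} lexicographic drop. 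The standard repair (Thomas, Bellenbaum--Diestel, and in the abstract language of this paper) is different in kind: one takes \emph{two} copies of the whole decomposition, one ``shifted onto'' $(A,B)$ and one onto $(B,A)$ (every bag $V_s$ is replaced by $V_s\cap A$, respectively $V_s\cap B$, augmented using a maximum family of disjoint $V_t$--$V_{t'}$ paths so that each deleted vertex is replaced by its ``partner'' on $S$ along its path), and glues the two copies along the small separator $S$. The Menger linkage is what guarantees no bag grows, and the comparison of the number of large bags/adhesions in the old tree versus the two glued copies is what yields the strict decrease; none of this is supplied by your outline.

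For comparison, the paper does not argue directly on tree-decompositions at all: it proves a general theorem for $S_k$-trees over a family of stars fixed under shifting, takes a $\prec$-minimal such tree where $\prec$ compares, level by level $p$, the number of edges (resp.\ vertices) whose separation (resp.\ star) has order (size) at least $p$ and the number of components they span, and derives the contradiction by shifting the whole tree onto a minimum-order separation $(X,Y)$ with $(A,B)\le(X,Y)\le(D,C)$ chosen nested with as many tree separations as possible, then gluing the two shifts (Lemmas \ref{l:shift1}--\ref{l:shift2}, Theorems \ref{t:link} and \ref{t:lean}); Theorem \ref{t:thomlink} then falls out by applying this to $\cc{F}_k$ and translating via Lemmas \ref{l:build}--\ref{l:leantree}. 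One further small repair on your side: minimizing the non-increasing sequence of all bag sizes requires justifying that a minimum exists (your ``no bag inside an adjacent bag'' reduction bounds the number of bags, but you should also check the rebuilt decomposition can be returned to this class without increasing type); the usual fix is to minimize the finite vector $(a_n,\dots,a_0)$, $a_h$ the number of bags of size $h$, which is well-ordered outright.
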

In fact he showed a stronger result.

\begin{defn}\label{d:leangraph}
A tree decomposition $(T,\cc{V})$ is called \emph{lean} if for all $k \in \bb{N}$, $t,t' \in T$ and vertex sets $Z_1 \subseteq V_{t}$ and $Z_2 \subseteq V_{t'}$ with $|Z_1|=|Z_2|=k$, either $G$ contains $k$ disjoint $Z_1-Z_2$ paths or there exists an edge $(s,s') \in tTt'$ with $|V_s \cap V_{s'}| < k$.
\end{defn}

Thomas showed that every graph has a lean tree-decomposition of width tw$(G)$. It is relatively simple to make a lean tree-decomposition linked without increasing its width: we subdivide each edge $(s,s') \in T$ by a new vertex $u$ and add $V_u := V_{s} \cap V_{s'}$. The real strength of Definition \ref{d:leangraph} in comparison to Definition \ref{d:link} is the case $t=t'$. Broadly, Definition \ref{d:link} tells us that the 'branches' in the tree-decomposition are no larger than the connectivity of the graph requires, if the separators $V_{s} \cap V_{s'}$ along a path in $T$ are large, then $G$ is highly connected along this branch. The case $t=t'$ of Definition \ref{d:leangraph} tells us that the bags are also no larger than their 'external connectivity' in $G$ requires.

Bellenbaum and Diestel \cite{BD02} used some of the ideas from Thomas' paper to give short proofs of two known results concerning tree-decompositions, the first Theorem \ref{t:thomlink} and the other the tree-width duality theorem of Seymour and Thomas \cite{ST93}. Very similar ideas appear in the literature in multiple proofs of the existence of `lean' tree-decompositions of minimal width, for many different width parameters. For example $\theta$-tree-width \cite{CDHH14,GJ16}, directed path-width \cite{KS15,K14} and DAG-width \cite{K14}. Similar ideas also appear in the proof of Geelen, Gerards and Whittle \cite{GGW02} that every matroid $M$ admits a `linked' branch-decomposition of width the branch width of $M$, which was used to extend Robertson and Seymour's result about the well quasi-ordering of graphs of bounded tree-width to matroids (See also \cite{O05} for an application to rank-width).

In this paper we will prove generalizations of Theorem \ref{t:thomlink} in a general framework introduced by Diestel \cite{O14}, which give unifying proofs of the existence of `linked' or `lean' tree-decompositions for a broad variety of width parameters. In particular this theorem will imply all the known results for undirected graphs and matroids from the introduction, as well as many new results by applying it to other width parameters expressible in this framework. In particular we prove a generalization of Theorem \ref{t:thomlink} to matroids\footnote{See Section \ref{s:mat} for the relevant definitions.}:

{
\renewcommand{\thetheorem}{\ref{t:matroid}}
\begin{theorem}
Every matroid $M$ admits a lean tree-decomposition of width tw$(M)$.
\end{theorem}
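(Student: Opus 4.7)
The plan is to obtain Theorem \ref{t:matroid} by casting matroids into the abstract separation-system framework developed earlier in the paper, so that the general unifying theorem applies and directly yields a lean tree-decomposition of width $\text{tw}(M)$.

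First, I would take the abstract set of separations $\cc{S}$ to be the collection of bipartitions $\{A, E(M)\setminus A\}$ of the ground set of $M$, equipped with the matroid connectivity function
\[
\lam{A} = r_M(A) + r_M(E(M)\setminus A) - r(M).
\]
Under the standard matroid notion of tree-decomposition (a tree $T$ together with an assignment of the elements of $E(M)$ to the nodes of $T$, so that each edge of $T$ induces a bipartition of $E(M)$), the width is $\max_{e \in E(T)} \lam{A_e}$, and $\text{tw}(M)$ is the minimum of this quantity.

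The key verifications are then: $\lam{}$ is symmetric, $\lam{A} = \lam{E(M)\setminus A}$; $\lam{}$ is submodular, $\lam{A} + \lam{B} \geq \lam{A \cap B} + \lam{A \cup B}$; and a Menger-type duality holds for $\lam{}$, namely that for disjoint subsets $X, Y \subseteq E(M)$ the largest $k$ such that no bipartition of order less than $k$ separates $X$ from $Y$ equals the matroid linkage $\kappa_M(X,Y)$. The first two are classical properties of the matroid connectivity function, while the third is precisely \emph{Tutte's linking theorem} for matroids, which plays the role of Menger's theorem in the graph case.

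Once these axioms are checked, the unifying theorem produces a lean tree-decomposition of $M$ of width $\text{tw}(M)$ directly. The main obstacle is therefore not the abstract machinery, but the translation step: one must verify that the abstract notion of leanness delivered by the framework coincides, under the dictionary ``order of a separator $= \lam{}$'' and ``$k$ disjoint $Z_1$--$Z_2$ paths $=$ Tutte linking of order $k$'', with the matroid analogue of Definition \ref{d:leangraph} used to formulate Theorem \ref{t:matroid} in Section \ref{s:mat}. After this correspondence is spelled out, the statement is a direct specialization of the general result, and no new combinatorial ideas are required beyond those already encoded in the unifying theorem.
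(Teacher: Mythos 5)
Your proposal takes essentially the same route as the paper: specialize the general framework to the universe of bipartitions of $E(M)$ with the order function $|X,E\setminus X|_r=\lambda(X)$ coming from the rank function, use the Diestel--Oum equivalence between matroid tree-decompositions of width $<k$ and $S_k$-trees over $\cc{F}_k$, apply Theorem \ref{t:lean} (with $\cc{F}_k$ fixed under shifting by Lemma \ref{l:shift2}), and translate $\cc{F}_k$-leanness into the matroid notion of leanness. The only cosmetic difference is that the paper phrases matroid leanness (Definition \ref{d:leanmat}) directly in terms of $\lambda(Z_1,Z_2)=\min\{\lambda(X)\,:\,Z_1\subseteq X,\ Z_2\subseteq E\setminus X\}$, so Tutte's linking theorem is not actually needed in the dictionary.
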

\addtocounter{theorem}{-1}
}

In order to prove a result broad enough to cover both graph and matroid tree-width, it will be necessary to combine some of the ideas from the proof of Bellenbaum and Diestel \cite{BD02} with that of Geelen, Gerards and Whittle \cite{GGW02}. In particular, we note that, when interpreted in terms of tree-decompositions of graphs, our proof will give a slightly different proof of Theorem \ref{t:thomlink} than appears in \cite{BD02}.

In the next section we will introduce the necessary background material. In Sections \ref{s:link} and \ref{s:lean} respectively we will give proofs of our theorems on the existence of linked and lean tree-decompositions. Finally, in Section \ref{s:app} we will use these theorems to deduce results about different width parameters of graphs and matroids.

\section{Background material}
\subsection{Notation}
Given a tree $T$ and vertices $t_1,t_2 \in V(T)$ we will write $t_1Tt_2$ for the unique path in $T$ between $t_1$ and $t_2$. Similarly, given edges $e_1,e_2 \in E(T)$ then we will write $e_1Te_2$ for the unique path in $T$ which starts at $e_1$ and ends at $e_2$. If $\ra{e}$ is a directed edge in $T$, say $\ra{e} = (x,y)$, then we will write $\la{e} = (y,x)$, and we denote by $T(\ra{e})$ the subtree of $T$ formed by taking the component $C$ of $T-x$ which contains $y$, and adding the edge $(x,y)$. For general graph theoretic notation we will follow \cite{D10}.

\subsection{Separation systems}
Our objects of study will be the separation systems of Diestel and Oum. A more detailed introduction to these structures can be found in \cite{O14}.

A \emph{separation system} $(\ra{S},\leq,*)$  as defined in \cite{O14} consists of a partially ordered set $\ra{S}$, whose elements are called \emph{oriented separations}, together with an order reversing involution $*$. One particular example of a separation system is the following: Given a set $V$, the set of ordered pairs
\[
\text{sep}(V) = \{ (A,B) \, : \, A \cup B = V \}
\]
together with the ordering
\[
(A,B) \leq (C,D) \text{ if and only if } A \subseteq C \text{ and } B \supseteq D,
\]
and the involution $(A,B)^* = (B,A)$ forms a separation system. Furthermore, any subset of sep$(V)$ which is closed under involutions forms a separation system, and we call such a separation system a \emph{set separation system}. We will use $V$ to denote the ground set of a set separation system unless otherwise specified. We will work with set separation systems rather than in full generality for a variety of reasons. Some of the results in the paper will carry over to the more abstract setting, but only with so many technical restrictions that little, if anything, is lost by this restriction. 

The elements $(A,B)$ of $\ra{S}$ are called \emph{separations} and the corresponding \emph{separation} is $\{A,B\}$. $(A,B)$ and $(B,A)$ are then the \emph{orientations} of $\{A,B\}$\footnote{When the context is clear we will often refer to both oriented and unoriented separations as merely `separations' to improve the flow of the text.}.

Two separations $\{A,B\}$ and $\{C,D\}$ are \emph{nested} if they have $\leq$-comparable orientations. A set of separations is \emph{nested} if it's elements are pairwise nested\footnote{We will often use terms defined for separations in reference to it's orientations, and vice versa.}. A (multi-)set of separations $\sigma \subset \ra{S}$ is a \emph{star} if
\[
(A,B) \leq (D,C) \text{ for all submultisets } \{(A,B),(C,D)\} \subset \sigma.
\]

There are binary operations on sep$(V)$ such that $(A,B) \wedge (C,D)$ is the infimum and $(A,B) \vee (C,D)$ is the supremum of $(A,B)$ and $(C,D)$ in sep$(V)$ given by:
\[
(A,B) \wedge (C,D) = (A \cap C, B \cup D) \text{    and    }   (A,B) \vee (C,D) = (A \cup C, B \cap D).
\]
If a set separation system $\ra{S}$ is closed under $\wedge$ and $\vee$ we say it is a \emph{universe} of set separations. The following lemma, whose elementary proof we omit, is often useful.
\begin{lemma}\label{l:corner}
If $(A,B)$ is nested with $(C,D)$ and $(E,F)$ then it is also nested with $(C,D) \vee (E,F), (C,D)  \wedge (E,F), (C,D)  \vee (F,E)$ and $(C,D) \wedge (F,E)$.
\end{lemma}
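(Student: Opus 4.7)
The plan is a direct case analysis using the coordinatewise formulas for $\wedge$ and $\vee$; the author describes this as an ``elementary proof we omit'', and indeed the content is essentially an organised enumeration of orientations. Since nestedness depends only on the underlying separations, I can freely swap any of $(A,B), (C,D), (E,F)$ with its involute without affecting the hypothesis. Moreover, the four corner separations $(C,D)\vee(E,F), (C,D)\wedge(E,F), (C,D)\vee(F,E), (C,D)\wedge(F,E)$ are permuted among themselves by the swap $(E,F)\leftrightarrow(F,E)$, so the conclusion is also symmetric under these moves. Using these symmetries I would fix $(A,B)\le(C,D)$ as a standing assumption, reducing the orientation of $\{E,F\}$ relative to $\{A,B\}$ to two essentially different subcases: $(A,B)\le(E,F)$ or $(E,F)\le(A,B)$.

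In the first subcase, the combined inclusions $A\subseteq C\cap E$ and $B\supseteq D\cup F$ immediately yield $(A,B)\le(C,D)\wedge(E,F)\le(C,D)\vee(E,F)$. For the crossed corner $(C,D)\vee(F,E)=(C\cup F, D\cap E)$, the chains $A\subseteq C\subseteq C\cup F$ and $D\cap E\subseteq D\subseteq B$ give $(A,B)\le(C,D)\vee(F,E)$. For the last corner $(C,D)\wedge(F,E)=(C\cap F, D\cup E)$ the comparison flips direction: $C\cap F\subseteq F\subseteq B$ together with $A\subseteq E\subseteq D\cup E$ give $(C,D)\wedge(F,E)\le(B,A)$, which witnesses nestedness of the underlying separations.

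The subcase $(E,F)\le(A,B)\le(C,D)$ is handled analogously, using the derived containments $E\subseteq C$ and $D\subseteq F$ to collapse $(C,D)\wedge(E,F)$ and $(C,D)\vee(E,F)$ to $(E,F)$ and $(C,D)$ respectively, and then verifying the two remaining corners by similar inclusion chases. The hard part, such as it is, is purely organisational --- keeping track of the sign configurations for the three separations and ensuring that no corner in any case is overlooked. There is no conceptual obstacle beyond the coordinatewise definitions of $\wedge$ and $\vee$, which is why the author feels comfortable omitting the proof.
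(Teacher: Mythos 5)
Your symmetry reduction is sound and Case 1 is correct; in Case 2 the corners $(C,D)\wedge(E,F)=(E,F)$, $(C,D)\vee(E,F)=(C,D)$ and $(C,D)\vee(F,E)$ also check out (for the last one, $B\subseteq F\subseteq C\cup F$ and $D\cap E\subseteq E\subseteq A$ give $(B,A)\le (C,D)\vee(F,E)$). The gap is the remaining corner of Case 2, which you wave through with ``similar inclusion chases'': from $A\subseteq C$, $D\subseteq B$, $E\subseteq A$, $B\subseteq F$ none of the four comparisons between $(A,B)$ and $(C,D)\wedge(F,E)=(C\cap F,\,D\cup E)$ follows, and in fact the statement fails in exactly this configuration. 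Take $V=\{1,2,3,4\}$, $(A,B)=(\{1,2\},\{2,3,4\})$, $(C,D)=(\{1,2,3\},\{3,4\})$, $(E,F)=(\{1\},\{2,3,4\})$, all in $\mathrm{sep}(V)$. Then $(E,F)\le(A,B)\le(C,D)$, so $(A,B)$ is nested with both, but $(C,D)\wedge(F,E)=(\{2,3\},\{1,3,4\})$ has no $\le$-comparable orientation with $(A,B)$: all four comparisons fail since $\{1,2\}\not\subseteq\{2,3\}$, $\{2,3\}\not\subseteq\{1,2\}$, $\{1,2\}\not\subseteq\{1,3,4\}$ and $\{1,3,4\}\not\subseteq\{1,2\}$. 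So the gap cannot be filled: the lemma as literally stated (the ``elementary proof'' the paper omits) is false for general set separations.

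The underlying issue is the crossing hypothesis in the usual fish lemma. Nestedness with $(C,D)$ and with $(E,F)$ gives two witnessing orientations of $\{A,B\}$; only if they can be chosen to coincide, say $(A,B)\le(C,D)$ and $(A,B)\le(E,F)$ (your Case 1), do all four corners follow. If they must differ, one gets $(E,F)\le(A,B)\le(C,D)$, hence $(C,D)$ and $(E,F)$ are themselves nested, and the mixed corner $(C,D)\wedge(F,E)$ may cross $(A,B)$, as above. The lemma should therefore either assume that $\{C,D\}$ and $\{E,F\}$ cross, or be stated only for the corners of orientations lying above a common orientation of $(A,B)$. This repaired version suffices for the paper: in both places Lemma \ref{l:corner} is invoked (in the proofs of Claims \ref{c:sepeq} and \ref{c:equal}), the alternative case --- that $(X,Y)$ is already nested with the separation in question --- is precisely the conclusion being extracted, so only the crossing case genuinely needs the lemma. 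Your write-up, however, neither notices the failing corner nor adds the needed hypothesis, so as it stands it proves a false statement.
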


We call a function $(A,B) \mapsto |A,B|$ on a universe of set separations an \emph{order function} if it is non-negative, symmetric and submodular. That is, if $0 \leq |A,B| = |B,A|$ for all $(A,B) \in \ra{S}$ and
\[
|A \cap C, B \cup D| + |A \cup C, B \cap D| \leq |A,B|+ |C,D|
\]
for all $(A,B),(C,D) \in \ra{S}$. Given a universe of set separations $\ra{S}$ with any order function we will often write ${\ra{S}\!}_k$ for the separation system
\[
{\ra{S}\!}_k := \{ (A,B) \in \ra{S} \,:\, |A,B| < k \}.
\] 

If $r: 2^V \rightarrow \N$ is a non-negative submodular function then it is easy to verify that
\[
|X,Y|_r = r(X) + r(Y) - r(V)
\]
is an order function on any universe contained in sep$(V)$. We note the following lemma, which will be useful later.

\begin{lemma}\label{l:un}
Let $\ra{S}$ be a universe of set separations with an order function $|.|_r$ for some non-negative non-decreasing submodular function $r : 2^V \rightarrow \N$. If $(B,A) \leq (C,D)$, then $|A \cup C, B \cap D|_r \leq |C,D|_r$.
\end{lemma}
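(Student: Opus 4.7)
My plan is to exploit two features of the hypothesis: that the elements of $\ra{S}$ are set separations, so $A \cup B = V = C \cup D$, and that $(B,A) \leq (C,D)$ unpacks as $B \subseteq C$ and $D \subseteq A$. The crucial first step is to observe that
\[
A \cup C \supseteq A \cup B = V,
\]
so $A \cup C = V$ and hence $r(A \cup C) = r(V)$. This collapses the left-hand side to
\[
|A \cup C, B \cap D|_r = r(A \cup C) + r(B \cap D) - r(V) = r(B \cap D),
\]
reducing the problem to showing $r(B \cap D) \leq r(C) + r(D) - r(V)$.

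To finish, I would use $B \subseteq C$ to get $B \cap D \subseteq C \cap D$, then apply the non-decreasingness of $r$ to deduce $r(B \cap D) \leq r(C \cap D)$. A single application of submodularity to $C$ and $D$ yields $r(C \cup D) + r(C \cap D) \leq r(C) + r(D)$, and substituting $C \cup D = V$ rearranges this to $r(C \cap D) \leq r(C) + r(D) - r(V) = |C,D|_r$. Chaining the two inequalities completes the argument.

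I do not anticipate any substantive obstacle: the proof uses only that set separations cover $V$, together with one invocation each of monotonicity and submodularity. The only point worth flagging is that one never needs to estimate $r(A \cup C)$ via submodularity; recognising outright that $A \cup C = V$, which is where the set-separation hypothesis (as opposed to the abstract setting of \cite{O14}) is genuinely used, is what makes the calculation short.
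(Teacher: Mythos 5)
Your proposal is correct and matches the paper's own proof essentially line for line: both observe that $A \cup C = V$ (so the left-hand side collapses to $r(B \cap D)$), then use $B \cap D \subseteq C \cap D$ with monotonicity, and finish with submodularity applied to $C$ and $D$ together with $C \cup D = V$. No issues to flag.
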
\vspace{-10pt}
\begin{proof}
Note that $A \cup C = V$ and $B  \cap D \subset C \cap D$. Hence, 
\begin{align*}
|A \cup C, B \cap D|_r &= r(A \cup C) + r(B \cap D) - r(V) \\
&= r(B \cap D) \leq r(C \cap D) \\ 
&\leq r(C) + r(D) - r(C \cup D) = |C,D|_r.
\end{align*}
\end{proof}

Our main application of separation systems will be to separations of graphs and matroids. If $G$ is a graph, we say that an ordered pair $(A,B)$ of subsets of $V(G)$ is an oriented separation if $A\cup B = V(G)$ and there is no edge between $A \setminus B$ and $B \setminus A$. It is not hard to show that the set of oriented separations $\ra{S}$ of a graph $G$ forms a universe of set separations as a subset of sep$\left(V(G)\right)$, and if we consider the non-negative non-decreasing submodular function $r:2^{V(G)} \rightarrow \N$ given by $r(A) := |A|$, then the order function given by $r$ is $|X,Y|_r = |X| + |Y| - |V| =  |X \cap Y|$. Throughout this paper, whenever we consider separation systems of graph separations we will use this order function.

Similarly, if $M = (E,\cc{I})$ is a matroid with rank function $r$, we say that an ordered bipartition $(A,E\setminus A)$ of the edge set is an oriented separation. Note that $r: 2^{E} \rightarrow \N$ is a non-negative non-decreasing submodular function. Again, the set of oriented separations $\ra{S}$ of a matroid $M$ forms a universe of set separations, with an order function given by $|X,Y|_r = r(X) + r(Y) - r(E)$. Again, whenever we consider separations systems of matroid separations we will use this order function.

\subsection{$S$-trees over sets of stars}

Given a tree $T$ and $t \in V(T)$ we write
\[
{\ra{F}\!}_t := \{ \ra{e} \,:\, \ra{e} = (x,t) \in \ra{E}(T) \}
\]

\begin{defn}
Let $\ra{S}$ be a set separation system. An \emph{$S$-tree} is a pair $(T,\alpha)$ where $T$ is a finite tree and 
\[
\alpha : \ra{E}(T) \rightarrow \ra{S}
\]
is a map from the set of oriented edges of $T$ to $\ra{S}$ such that, for each edge $e \in E(T)$, if $\alpha(\ra{e}) = (A,B)$ then $\alpha(\la{e}) = (B,A)$. 
\end{defn}

Let us call an $S$-tree \emph{tame} if each $\sigma_t$ is a star. It is easy to check that if $(T,\alpha)$ is a tame $S$-tree then $\alpha$ preserves the natural ordering on $\ra{E}(T)$. We note that, in contrast to [\cite{O15}, Section 6], we are considering $\sigma_t$ as a multiset of separations. In \cite{O15}, since they only consider the underlying sets, it can happen that a separation $(A,B)$ appears twice in $\sigma_t$, even if $(A,B) \not\leq (B,A)$, which will make $\alpha$ not order-preserving. In \cite{O15} this problem is avoided by 'pruning' the $S_k$-trees, by removing branches behind duplicated separations, so as only to work with trees in which each $\sigma_t$ is a set. In order to reduce technical details in the proofs of Theorem \ref{t:link} and \ref{t:lean} we will instead work with multisets. For a finite set $X$ we will write $\N^{X}$ for the set of all finite submultisets of $X$.

If $\cc{F} \subseteq \N^{\ra{S}}$ is a family of stars we say that an $S$-tree $(T,\alpha)$ is \emph{over $\cc{F}$} if for all $t \in V(T)$, the multiset $\sigma_t := \alpha(\ra{F}_t) \in \cc{F}$. Sometimes, for notational convenience, we will refer to an $S$-tree over $\cc{F}$ when $\cc{F} \not\subseteq \N^{\ra{S}}$, by which it should be taken to mean an $S$-tree over $\cc{F} \cap \N^{\ra{S}}$.  Note that if $\cc{F}$ is a family of stars, then ever $S$-tree over $\cc{F}$ is tame.

It is observed in \cite{DO142} that many existing ways of decomposing graphs can be expressed in this framework if we take $\ra{S}$ to be the universe of separations of a graph. For example given a star $\sigma = \{ (A_1,B_1), (A_2,B_2), \ldots , (A_n,B_n)\}$ let us write $\langle \sigma \rangle := | \bigcap_i B_i |$. If we let
\[
\cc{F}_k = \{ \sigma \in \N^{\ra{S}} \, : \, \sigma \text{ a star, } \langle \sigma \rangle <k\}
\]
then it is shown in \cite{DO142} that a graph $G$ has a tree-decomposition of width $< k-1$ if and only if there exists an $S_k$-tree over $\cc{F}_k$. More examples will be given later in Section \ref{s:app} when we apply Theorems \ref{t:link} and \ref{t:lean} to existing types of tree-decomposition.

Given two separations $(A,B) \leq (C,D)$ in a universe of set seperations $\ra{S}$ with an order function $|.|$ let us define 
\[
\lam{(A,B),(C,D)} =\min \{ |X,Y| \, : \, (A,B) \leq (X,Y) \leq (C,D) \}.
\]
We will think of $\lambda$ as being a measure of connectivity, and indeed in the case of separation systems of graphs and matroids it will coincide with the normal connectivity function. 

\begin{defn}\label{d:linksep}
Let $\ra{S}$ be a universe of set separations with an order function $|.|$, $k \in \N$. We say that an $S_k$-tree $(T,\alpha)$  over some family of stars is \emph{linked} if for every pair of edges $\ra{e} \leq \ra{f}$ in $\ra{E}(T)$ the following holds:
\[
\min \{ |\alpha(\ra{g})| \,:\, \ra{e}\leq \ra{g}\leq \ra{f} \} = \lam{ \alpha(\ra{e}), \alpha(\ra{f}) }.
\]
\end{defn}

This definition more closely resembles the definition of linked from Geelen, Gerards and Whittle \cite{GGW02} than Definition \ref{d:link}. However, as in the introduction, if we take a tree-decomposition of a graph which is linked in this sense and add the adhesion sets as parts, it will be linked in the sense of Thomas. For any family of stars $\cc{F}$ we also introduce the following concept of leanness.  

\begin{defn}
Let $\ra{S}$ be a set separation system and let $(T,\alpha)$ be an $S$-tree over some family of stars $\cc{F}$. Given a vertex $t \in T$ we say that a separation $(A,B)$ is \emph{addable at $t$} if $\sigma_t \cup \{(A,B)\} \in \cc{F}$.
\end{defn}

\begin{defn}\label{d:leansep}
Let $\ra{S}$ be a universe of set separations with an order function $|.|_r$ for some non-negative non-decreasing submodular function $r: 2^V \rightarrow \N$, $k \in \N$. We say that an $S_k$-tree $(T,\alpha)$ over a family of stars $\cc{F}$ is \emph{$\cc{F}$-lean} if for every pair of vertices $t$ and $t'$ and every pair of separations $(A,B) \leq (B',A')$ such that $(A,B)$ is addable at $t$ and $(A',B')$ is addable at $t'$, either
\[
\lam{ (A,B) , (B',A') } \geq  \min \{ r(A), r(A') \}
\]
or
\[
\min \{ |\alpha(\ra{g})| \,:\, \ra{g} \in tT t' \} = \lam{ (A,B) , (B',A')}.
\]
\end{defn}

We note that, unlike in the case of Definitions \ref{d:link} and \ref{d:leangraph}, it is not true in general that the existence of an $\cc{F}$-lean $S_k$-tree over $\cc{F}$ will imply the existence of a linked $S_k$-tree over $\cc{F}$. Indeed, for many families $\cc{F}$ of stars, the set of separations addable at any star may be very limited, if not empty. In this case the property of being $\cc{F}$-lean may be vacuously true of any $S_k$-tree over $\cc{F}$. 

\subsection{Shifting $S$-trees}
Let $\ra{S}$ be a universe of set separations. Our main tool will be a method for taking an $S$-tree $(T,\alpha)$ and a separation $(X,Y)$ and building another $S$-tree $(T',\alpha')$  which contains $(X,Y)$ as the image of some edge. 

Given a tame $S$-tree $(T,\alpha)$, a separation $(A,B) = \alpha(\ra{e}) \in \alpha(\ra{E}(T))$, and another separation $(A,B)\leq (X,Y)$ the \emph{shift of $(T,\alpha)$ onto $(X,Y)$ (with respect to $\ra{e}$)} is the $S$-tree $(T(\ra{e}),\alpha')$ where $\alpha'$ is defined as follows:
 
For every edge $f \in E(T(\ra{e}))$ there is a unique orientation of $f$ such that $\ra{e} \leq \ra{f}$. We define
\[
\alpha'(\ra{f}) := \alpha(\ra{f}) \vee (X,Y) \text{    and    } \alpha'(\la{f}) := \alpha'(\ra{f})^* = \alpha(\la{f}) \wedge (Y,X).
\]
Note that, since $(A,B) \leq (X,Y)$ we have that $\alpha'(\ra{e}) = \alpha(\ra{e}) \vee (X,Y) = (A,B) \vee (X,Y) = (X,Y)$, and so $(X,Y) \in \alpha(\ra{E}(T(\ra{e})))$.

Our tool will be in essence [Lemma 4.2, \cite{DO141}], however, for technical reasons we will take a slightly different statement. The first reason is that we need a slightly more general notion of shifting, since in \cite{DO141} they only shift a tree with respect to a leaf. Secondly, in \cite{DO141} they define $\alpha'$ as a composition of $\alpha$ with a map from $\ra{S}$ to $\ra{S}$, which causes problems if the same separation appears twice in a tree. We avoid this issue by defining $\alpha'$ in a slightly different way, which allows us to avoid some of the technical restrictions on [Lemma 4.2, \cite{DO141}]. As with the change from sets to multisets in the definition of an $S$-tree, this is only a superficial change in order to avoid messy technical details in the proofs of Theorems \ref{t:link} and \ref{t:lean}.

\begin{lemma}\label{l:tree}
Let $S$ be a universe of set separations. Given a tame $S$-tree $(T,\alpha)$, $\ra{e}$ and $(X,Y)$ as above, the shift of $(T,\alpha)$ onto $(X,Y)$ with respect to $\ra{e}$ is a tame $S$-tree.
\end{lemma}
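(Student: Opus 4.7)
The plan is to verify two things about the constructed pair $(T(\ra{e}),\alpha')$: first, that $\alpha'(\la{f}) = \alpha'(\ra{f})^*$ for every $f \in E(T(\ra{e}))$, so that it is a genuine $S$-tree; and second, that $\sigma_t = \alpha'(\ra{F}_t)$ is a star for every $t \in V(T(\ra{e}))$. The first is an immediate computation using the De~Morgan identity $(P \vee Q)^* = P^* \wedge Q^*$ that holds in any universe of set separations:
\[
\alpha'(\ra{f})^* = \bigl(\alpha(\ra{f}) \vee (X,Y)\bigr)^* = \alpha(\la{f}) \wedge (Y,X),
\]
matching the stipulated definition of $\alpha'(\la{f})$.

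For the tameness check I would root $T(\ra{e})$ at $x$, where $\ra{e}=(x,y)$. Every edge $f \in E(T(\ra{e}))$ then has a natural orientation from parent to child, i.e.\ pointing away from $x$; I claim this coincides with the unique orientation $\ra{f}$ satisfying $\ra{e} \leq \ra{f}$. Indeed, the orientations of a tree edge $f$ correspond to the two components of $T-f$, ordered by inclusion of their vertex sets, and $\ra{e}=(x,y) \leq \ra{f}$ exactly when the tail-side of $f$ contains $x$. Now fix $t \in V(T(\ra{e}))$. Among the edges at $t$, at most one is ``upward''---the edge $(p,t)$ from $t$'s parent $p$ (if $t \neq x$)---and for this edge the orientation $(p,t)$ lies both in $\ra{F}_t$ and in the $\ra{e}\leq$-convention, so $\alpha'((p,t)) = \alpha((p,t)) \vee (X,Y)$. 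The remaining edges at $t$ are ``downward'' edges $(t,c)$ to children $c$; for these the $\ra{F}_t$-orientation $(c,t)$ is the \emph{reverse} of the $\ra{e}\leq$-orientation, so $\alpha'((c,t)) = \alpha((c,t)) \wedge (Y,X)$.

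To verify the star condition at $t$, I must show $\alpha'(\ra{f}_1) \leq \alpha'(\ra{f}_2)^*$ for any two distinct $\ra{f}_1,\ra{f}_2 \in \ra{F}_t$. The original tameness gives $\alpha(\ra{f}_1) \leq \alpha(\la{f}_2)$, and the conclusion in each of the three non-trivial sub-cases (downward/downward, upward/downward, downward/upward; upward/upward is vacuous) follows by monotonicity of $\vee$ and $\wedge$. For example, in the downward/downward case with children $c_1,c_2$,
\[
\alpha'((c_1,t)) = \alpha((c_1,t)) \wedge (Y,X) \leq \alpha((c_1,t)) \leq \alpha((t,c_2)) \leq \alpha((t,c_2)) \vee (X,Y) = \alpha'((c_2,t))^*;
\]
the other cases are entirely analogous, each reducing to $\alpha(\ra{f}_1) \leq \alpha(\la{f}_2)$ together with a single application of monotonicity.

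The main obstacle is purely bookkeeping: the author's convention of reserving $\ra{f}$ for the $\ra{e}\leq$-orientation collides with the convention $\ra{f}\in\ra{F}_t$ at downward edges, so one must be scrupulous about which orientation is substituted into the formula defining $\alpha'$. Beyond this there is no real difficulty—no appeal to Lemma~\ref{l:corner} or to the order function is required, only the fact that $\vee$ and $\wedge$ are monotone in each argument in a universe of set separations.
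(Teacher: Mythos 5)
Your proof is correct and is essentially the paper's own argument: the involution property holds by the definition of $\alpha'$ (your De~Morgan check), and tameness is verified exactly as in the paper by singling out at each vertex $t$ the unique edge of $\ra{F}_t$ oriented away from the tail of $\ra{e}$ (your ``upward'' edge), writing the other edges with $\wedge\,(Y,X)$, and deducing the star inequalities from the original star at $t$ via monotonicity of $\vee$ and $\wedge$. The only cosmetic difference is that you treat the downward/upward case directly where the paper appeals to the order-reversing involution, and you absorb the leaf vertex of $T(\ra{e})$ into the case analysis rather than handling it separately.
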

\begin{proof}
The shift is clearly an $S$-tree by definition of $\alpha'$. So, it remains to check that it is tame. Let $\ra{e} = (\ell,t)$. Since $\ell$ is a leaf, $\alpha'({\ra{F}\!}_\ell)$ is a singleton, and so clearly a star. 

Given $t \in V(T(\ra{e})) \setminus \ell$ there is a unique edge $\ra{f} \in {\ra{F}\!}_t$ such that $\ra{e} \leq \ra{f}$. Then,
\[
\alpha'({\ra{F}\!}_t) = \{ \alpha(\ra{f}) \vee (X,Y) \} \cup \{ \alpha(\ra{g}) \wedge (Y,X) \,: \, \ra{g} \in {\ra{F}\!}_t \setminus \ra{f} \}.
\]
It follows, since $\alpha(\ra{g}) \leq \alpha(\la{h})$ for all $\ra{g},\ra{h} \in {\ra{F}\!}_t \setminus \ra{f}$, that
\[
\alpha'(\ra{g}) = \alpha(\ra{g}) \wedge (Y,X) \leq  \alpha(\ra{g}) \leq \alpha(\la{h}) \leq  \alpha(\la{h}) \vee (X,Y) = \alpha'(\la{h}),
\]
and
\begin{align*}
\alpha'(\ra{f}) = \alpha(\ra{f}) \vee (X,Y) \leq \alpha(\la{g}) \vee (X,Y) = \alpha'(\la{g}).
\end{align*}
Since the involution is order-reversing, it follows that $\alpha'({\ra{F}\!}_t)$ is a star.
\end{proof}

In general, if $\ra{S}$ is not a universe but just a set separation system, it may not be true that the shift of an $S$-tree is still an $S$-tree. However, when $\ra{S}={\ra{S}\!}_k$ for some universe there is a natural condition on $(X,Y)$ which guarantees that the shift will still be an $S_k$-tree.

\begin{defn}
Let $\ra{S}$ be a set separation system living in some universe with an order function $|.|$ and let $(A,B),(X,Y) \in \ra{S}$. We say that $(X,Y)$ is \emph{linked} to $(A,B)$ if $(A,B) \leq (X,Y)$ and
\[
 |X,Y| = \lam{ (A,B) , (X,Y)}.
\]
\end{defn}

\begin{lemma}\label{l:shift1}
Let $\ra{S}$ be a universe of set separations with an order function $|.|$, $k \in \N$, $(T,\alpha)$ a tame $S_k$-tree, $(A,B)=\alpha(\ra{e}) \in \alpha(\ra{E}(T))$ and let $(X,Y)$ be linked to $(A,B)$. Then the shift of $(T,\alpha)$ onto $(X,Y)$ with respect to $\ra{e}$ is a tame $S_k$-tree.
\end{lemma}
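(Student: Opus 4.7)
The plan is to leverage Lemma \ref{l:tree}, which already gives that the shift is a tame $S$-tree, and then check separately that every separation in the image of $\alpha'$ has order less than $k$. Since $(T,\alpha)$ is tame, $\alpha$ preserves the ordering on $\ra{E}(T)$, so for every oriented edge $\ra{f}$ of $T(\ra{e})$ with $\ra{e} \leq \ra{f}$ we have $(A,B) = \alpha(\ra{e}) \leq \alpha(\ra{f})$. Combined with $(A,B) \leq (X,Y)$, this places $(A,B)$ below both $\alpha(\ra{f})$ and $(X,Y)$, so in particular $(A,B) \leq \alpha(\ra{f}) \wedge (X,Y) \leq (X,Y)$.

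Next I would invoke submodularity of the order function on the pair $\alpha(\ra{f})$ and $(X,Y)$:
\[
|\alpha(\ra{f}) \vee (X,Y)| + |\alpha(\ra{f}) \wedge (X,Y)| \leq |\alpha(\ra{f})| + |X,Y|.
\]
The linkedness hypothesis tells us that $|X,Y| = \lam{(A,B),(X,Y)}$, i.e. no separation $(U,V)$ with $(A,B) \leq (U,V) \leq (X,Y)$ has order strictly smaller than $|X,Y|$. Since $\alpha(\ra{f}) \wedge (X,Y)$ is such a separation, we obtain $|\alpha(\ra{f}) \wedge (X,Y)| \geq |X,Y|$. Substituting into the submodularity inequality yields
\[
|\alpha'(\ra{f})| = |\alpha(\ra{f}) \vee (X,Y)| \leq |\alpha(\ra{f})| < k,
\]
using that $(T,\alpha)$ is an $S_k$-tree. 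By the symmetry of $|.|$ the same bound holds on the reverse orientations, so every image of $\alpha'$ lies in ${\ra{S}\!}_k$.

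The two steps above handle every edge $f$ of $T(\ra{e})$ in its natural orientation pointing away from $\ra{e}$, which covers all edges since $T(\ra{e})$ is precisely the subtree lying on the $\ra{e}$-side. Combined with Lemma \ref{l:tree}, which supplies tameness, this establishes that the shift is a tame $S_k$-tree. The only nontrivial ingredient is the inequality $|\alpha(\ra{f}) \wedge (X,Y)| \geq |X,Y|$, which is where the linkedness hypothesis on $(X,Y)$ is essential; without it the submodular estimate would not suffice to preserve membership in ${\ra{S}\!}_k$.
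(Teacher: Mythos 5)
Your proposal is correct and follows essentially the same argument as the paper: invoke Lemma \ref{l:tree} for tameness, use the order-preservation of $\alpha$ and $(A,B) \leq (X,Y)$ to place $\alpha(\ra{f}) \wedge (X,Y)$ between $(A,B)$ and $(X,Y)$, apply the linkedness hypothesis to get $|\alpha(\ra{f}) \wedge (X,Y)| \geq |X,Y|$, and conclude $|\alpha(\ra{f}) \vee (X,Y)| \leq |\alpha(\ra{f})| < k$ by submodularity. The only cosmetic difference is that the paper reduces to the orientation $\ra{e} \leq \ra{f}$ "by symmetry" where you appeal to the symmetry of $|.|$ for the reverse orientations; these are the same observation.
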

\begin{proof}
By Lemma \ref{l:tree} the shift is a tame $S$-tree, so it will be sufficient to show that $\alpha'(\ra{E}(T(\ra{e}))) \subseteq {\ra{S}\!}_k$. Suppose that $(C,D)  = \alpha'(\ra{f}) \in \alpha'(\ra{E}(T(\ra{e})))$. By symmetry we may assume that $\ra{e} \leq \ra{f}$, and so $(C,D) = \alpha(\ra{f}) \vee (X,Y)$ by definition. 

Since $\ra{e}\leq \ra{f}$, $\alpha(\ra{e}) = (A,B) \leq \alpha(\ra{f})$ and also $(A,B) \leq (X,Y)$. Hence,
\[
 (A,B) \leq \alpha(\ra{f}) \wedge (X,Y) \leq (X,Y).
\]
Therefore, since $(X,Y)$ is linked to $(A,B)$, it follows that
\[
|\alpha(\ra{f}) \wedge (X,Y)| \geq |X,Y|
\]
and so by the submodularity of the order function
\[
|C,D| = |\alpha(\ra{f}) \vee (X,Y)| \leq |\alpha(\ra{f})| < k,
\]
and so $(C,D) \in {\ra{S}\!}_k$.
\end{proof}

Finally we will need a condition that guarantees that the shift of an $S_k$-tree over $\cc{F}$ is still over $\cc{F}$. 

\begin{defn}
Let $\ra{S}$ be a set separation system living in some universe with an order function $|.|$ and let $\cc{F} \subseteq \N^{\ra{S}}$. We say that $\cc{F}$ is \emph{fixed under shifting} if whenever $\sigma \in \cc{F}$, $(T,\alpha)$ is a tame $S$-tree with $\ra{e} = (t,t') \in \ra{E}(T)$,  $\sigma = \alpha({\ra{F}\!}_s)$ for some $s \in T(\ra{e}) \setminus t$, and $(X,Y)$ is linked to $\alpha(\ra{e})$, then in the shift of $(T,\alpha)$ onto $(X,Y)$ with respect to $\ra{e}$, $\sigma'=\alpha'({\ra{F}\!}_s) \in \cc{F}$.
\end{defn}

It essentially this property of a family of stars $\cc{F}$ that is used in \cite{DO142} to prove a duality theorem for $S_k$-trees over $\cc{F}$. It may seem like a strong property to hold, but in fact it is seen to hold in a large number of cases corresponding to known width-parameters of graphs such as branch-width, tree-width (see Lemma \ref{l:shift2}), and path-width. 

\begin{lemma}\label{l:closedundershifting}
Let $\ra{S}$ be a universe of set separations with an order function $|.|$, $k \in \mathbb{N}$ and let $\cc{F} \subset \N^{\ra{S}}$ be a family of stars which is fixed under shifting. Let $(T,\alpha)$ be a tame $S_k$-tree, $\ra{e} = (t,t') \in \ra{E}(T)$ such that:
\[
\text{for every } s \in V(T(\ra{e})) \setminus t, \, \alpha\left({\ra{F}\!}_s\right) \in \cc{F}.
\]
If $(X,Y)$ is linked to $\alpha(\ra{e})$ then the shift of $(T,\alpha)$ onto $(X,Y)$ with respect to $\ra{e}$ is a tame $S_k$-tree over $\cc{F} \cup \{(Y,X)\}$. Furthermore, if $\{(Y,X)\} \not\in \cc{F}$ then $t$ is the unique vertex such that $\alpha'\left({\ra{F}\!}_t \right) = \{(Y,X)\}$.
\end{lemma}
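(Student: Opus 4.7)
The plan is to verify the conclusion directly from Lemma~\ref{l:shift1} together with the definition of fixed under shifting, treating the vertex $t$ (which is a leaf of $T(\ra{e})$) as the only case where something new may appear. By Lemma~\ref{l:shift1} applied to the tame $S_k$-tree $(T,\alpha)$, the edge $\ra{e}$, and the separation $(X,Y)$ which is linked to $\alpha(\ra{e})$, the shift $(T(\ra{e}),\alpha')$ is already a tame $S_k$-tree. So the only thing left to check is that for every $s \in V(T(\ra{e}))$ the multiset $\alpha'(\ra{F}_s)$ lies in $\cc{F} \cup \{\{(Y,X)\}\}$, together with the uniqueness statement.

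First I would handle the leaf $t$. Since $\ra{e} = (t,t')$, in the subtree $T(\ra{e})$ the vertex $t$ is a leaf whose only incoming edge is $\la{e}$, so $\ra{F}_t = \{\la{e}\}$. Writing $(A,B) = \alpha(\ra{e})$, by definition of the shift we get
\[
\alpha'(\la{e}) = \alpha(\la{e}) \wedge (Y,X) = (B,A) \wedge (Y,X) = (B \cap Y, A \cup X) = (Y,X),
\]
where the last equality uses $(A,B) \leq (X,Y)$, i.e.\ $A \subseteq X$ and $B \supseteq Y$. Hence $\alpha'(\ra{F}_t) = \{(Y,X)\}$.

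For any other vertex $s \in V(T(\ra{e})) \setminus t$, we have $\alpha(\ra{F}_s) \in \cc{F}$ by hypothesis, and $(X,Y)$ is linked to $\alpha(\ra{e})$. This is exactly the setup of the definition of \emph{fixed under shifting}, which then delivers $\alpha'(\ra{F}_s) \in \cc{F}$ as required. Combining the two cases shows that $(T(\ra{e}),\alpha')$ is an $S_k$-tree over $\cc{F} \cup \{\{(Y,X)\}\}$.

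For the furthermore statement, suppose $\{(Y,X)\} \notin \cc{F}$. The previous paragraph shows $\alpha'(\ra{F}_s) \in \cc{F}$ for every $s \neq t$, so in particular $\alpha'(\ra{F}_s) \neq \{(Y,X)\}$ for such $s$, while $\alpha'(\ra{F}_t) = \{(Y,X)\}$ by the leaf computation. Thus $t$ is the unique vertex with $\alpha'(\ra{F}_t) = \{(Y,X)\}$. There is no real obstacle here: the lemma is essentially a bookkeeping assembly of Lemma~\ref{l:shift1} and the definition of fixed under shifting, with the only computational content being the identification $\alpha'(\la{e}) = (Y,X)$ at the leaf, which is precisely the potentially new star that motivates allowing $\{(Y,X)\}$ in the family.
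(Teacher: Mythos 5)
Your proof is correct and follows essentially the same route as the paper: invoke Lemma~\ref{l:shift1} for the tame $S_k$-tree part, identify the star at the leaf $t$ as $\{(Y,X)\}$ (the paper does this via $\alpha'(\ra{e})=(X,Y)$ and the involution, you via the equivalent $\wedge$ computation), and apply the definition of fixed under shifting at all other vertices. Your explicit treatment of the ``furthermore'' clause is a small addition the paper leaves implicit, but the argument is the same.
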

\begin{proof}
By Lemma \ref{l:shift1} the shift is a tame $S_k$-tree, it remains to show that it is over $\cc{F} \cup \{(Y,X)\}$. Firstly, we note that by definition $\alpha'(\ra{e}) = (X,Y)$ and so $\alpha'({\ra{F}\!}_t) = \{ \alpha'(\la{e}) \} = \{ (Y,X) \} \in \cc{F} \cup \{(Y,X)\}$.

Suppose then that $s \in V(T(\ra{e})) \setminus t$. By assumption, $\alpha\left({\ra{F}\!}_s\right) \in \cc{F}$ and so, since $\cc{F}$ is fixed under shifting, $\sigma'=\alpha'({\ra{F}\!}_t) \in \cc{F}$.
\end{proof}

Let $\ra{S}$ be a universe of set separations with an order function $|.|_r$ for some non-decreasing submodular function $r: 2^V \rightarrow \mathbb{N}$. For any star 
\[
\sigma = \{ (A_0,B_0), (A_2,B_2), \ldots , (A_n,B_n)\}
\]
let us define the \emph{size} of $\sigma$
\[
\langle \sigma \rangle_r = \sum_{i=0}^n r(B_i) - n . r(V).
\]
Note that, when $r(X) = |X|$, $\langle \sigma \rangle_r = |\bigcap_{i=0}^n B_i|$. We define
\[
\cc{F}_p = \{ \sigma \subset \N^{\ra{S}} \, : \, \sigma \text{ a star, } \langle \sigma \rangle_r < p\}.
\]
We note the following properties of $\langle . \rangle_r$.

\begin{lemma}\label{l:sepstar}
Let $\ra{S}$ be a universe of set separations with an order function $|.|_r$ for some non-negative non-decreasing submodular function $r: 2^V \rightarrow \N$. If $\sigma$ is a star and $(A,B) \in \sigma$ then $\langle \sigma \rangle_r \geq |A,B|_r$. Also, if $\sigma$ is a star and $\sigma \cup \{(A,B) \}$ is a star then $\langle \sigma \rangle_r  \geq r(A)$.
\end{lemma}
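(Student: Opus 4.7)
The plan is to reduce both parts of the lemma to a single submodular inequality: for any star
\[
\sigma = \{(A_0,B_0), (A_1,B_1), \dots, (A_n,B_n)\}
\]
in a universe of set separations with $|.|_r$ as above,
\[
\sum_{i=0}^{n} r(B_i) \;\geq\; n \cdot r(V) + r\Bigl(\bigcap_{i=0}^{n} B_i\Bigr).
\]

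First I would record a consequence of the star condition inside a universe of set separations. For any two distinct elements $(A_i,B_i), (A_j,B_j) \in \sigma$, the relation $(A_i,B_i) \leq (B_j,A_j)$ gives $A_i \subseteq B_j$, and combining with $A_i \cup B_i = V$ shows $B_i \cup B_j = V$. More generally, for any $I \subseteq \{0,\dots,n\}$ and $j \notin I$, $(\bigcap_{i \in I} B_i) \cup B_j = \bigcap_{i \in I}(B_i \cup B_j) = V$. With this in hand, the displayed inequality follows by induction on $n$: the inductive step applies the submodularity of $r$ to $\bigcap_{i<n} B_i$ and $B_n$, whose union is $V$ by the observation just made, so $r(\bigcap_{i<n} B_i) + r(B_n) \geq r(V) + r(\bigcap_{i \leq n} B_i)$.

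For the first statement, fix $(A_j,B_j) \in \sigma$; without loss of generality $j = 0$. Apply the main inequality to the sub-star $\sigma \setminus \{(A_0,B_0)\}$ of size $n$ to obtain
\[
\sum_{i=1}^{n} r(B_i) \;\geq\; (n-1) \cdot r(V) + r\Bigl(\bigcap_{i=1}^{n} B_i\Bigr).
\]
The star condition applied to the pair $(A_i,B_i), (A_0,B_0)$ forces $A_0 \subseteq B_i$ for each $i \geq 1$, so $A_0 \subseteq \bigcap_{i \geq 1} B_i$, and monotonicity of $r$ gives $r(\bigcap_{i \geq 1} B_i) \geq r(A_0)$. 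Adding $r(B_0)$ to both sides and rearranging yields exactly $\langle \sigma \rangle_r \geq r(A_0) + r(B_0) - r(V) = |A_0,B_0|_r$.

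For the second statement, the assumption that $\sigma \cup \{(A,B)\}$ is itself a star gives $A \subseteq B_i$ for every $i$, hence $A \subseteq \bigcap_{i=0}^{n} B_i$. Applying the main inequality to $\sigma$ directly and using monotonicity yields $\sum_{i=0}^{n} r(B_i) \geq n \cdot r(V) + r(A)$, which rearranges to $\langle \sigma \rangle_r \geq r(A)$. There is no serious obstacle; the only point to handle carefully is the inductive submodularity step, which crucially uses that any two $B_i$'s in a star inside a universe of set separations already cover $V$.
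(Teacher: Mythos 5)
Your proof is correct and follows essentially the same route as the paper: the paper's Claim 2.7 is exactly your ``main inequality'' (stated there with an extra set $X$, which reduces to your version on taking $X=V$), proved by the same telescoping submodularity applied to the partial intersections $\bigcap_{i\le j} B_i$, using that in a star each partial intersection together with the next $B_j$ covers $V$; both applications then proceed as you do, via $A \subseteq \bigcap B_i$ and monotonicity of $r$.
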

\begin{proof}
Both proofs will follow from the following claim:
\begin{claim}\label{c:starint}
Let $Z_0,Z_1, \ldots Z_n, X \subseteq V$ and let us write $Z^*_i = \bigcap_{j=0}^i Z_j $.If $Z^*_i \cup Z_{i+1} = V$ for each $i$, then
\[
\sum_{i=0}^n r(Z_i \cap X)  \geq r(Z^*_n \cap X) + n .r(X).
\]
\end{claim}
\begin{proof}[Proof of Claim]
Since $Z^*_i \cup Z_{i+1} = V$, by submodularity
\[
r(Z^*_i \cap X) + r(Z_{i+1} \cap X)  \geq r(Z^*_{i+1} \cap X ) + r(X).
\]
Since $Z^*_0 = Z_0$, if we add these inequalities for $i=0,\ldots,n-1$, we get
\[
\sum_{i=0}^n r(Z_i \cap X) \geq r(\bigcap_{i=0}^n Z_i \cap X) + n .r(X).
\]
\end{proof}

For the first result let us write $\sigma = \{ (A,B), (A_1,B_1), \ldots , (A_n,B_n)\}$. Since $\sigma$ is a star, $(A,B) \leq (B_i,A_i)$ for all $i$ and so $A \subset \bigcap_{i=1}^n B_i$. Hence $r(A) \leq r(\bigcap_{i=1}^n B_i)$. Therefore, applying Claim \ref{c:starint} to $B_1,B_2,\ldots,B_n,V$, we see that
\begin{align*}
\langle \sigma \rangle_r &= r(B) + \sum_{i=1}^n r(B_i) - n.r(V) \\
&\geq r(B) + r(\bigcap_{i=1}^n B_i) - r(V)\\
&\geq r(B) + r(A) - r(V)\\
&= |A,B|_r
\end{align*}

For the second let us write $\sigma = \{(A_0,B_0), (A_1,B_1), \ldots (A_n,B_n)\}$. If $\sigma \cup \{(A,B) \}$ is a star then $(A,B) \leq (B_i,A_i)$ and so $A \subset B_i$ for all $i$. Hence, by Claim \ref{c:starint} applied to $B_0,B_1,\ldots, B_n,V$,
\begin{align*}
\langle \sigma \rangle_r  &= \sum_{i=0}^n r(B_i) - n.r(V) \\
&\geq r(\bigcap_{i=0}^n B_i)\\
&\geq r(A).
\end{align*}

\end{proof}

The following lemma can be seen as an analogue of [Lemma 2, \cite{BD02}], in that it gives a condition for when the shifting operation does not increase `width', when interpreted as $\max \{ \langle \sigma_t \rangle_r \,:\, t \in V(T) \}$. The proof of this lemma follows closely the proofs of [Lemmas 6.1 and 8.3, \cite{DO142}].

\begin{lemma}\label{l:shift2}
Let $\ra{S}$ be a universe of set separations with an order function $|.|_r$ for some non-negative non-decreasing submodular function $r: 2^V \rightarrow \N$ and let $p \in \mathbb{N}$. Then $\cc{F}_p$ is fixed under shifting
\end{lemma}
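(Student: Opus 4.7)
The plan is to show that shifting never increases $\langle \cdot \rangle_r$, which is enough since $\sigma \in \cc{F}_p$ means $\langle \sigma \rangle_r < p$ and hence also $\langle \sigma' \rangle_r < p$. First I would set up notation: write $\sigma = \alpha(\ra{F}_s) = \{(A_0, B_0), (A_1, B_1), \ldots, (A_n, B_n)\}$, where $(A_0, B_0) = \alpha(\ra{g_0})$ for the unique edge $\ra{g_0} \in \ra{F}_s$ with $\ra{e} \leq \ra{g_0}$, and set $(A, B) := \alpha(\ra{e})$. Then $(A, B) \leq (A_0, B_0)$ and $(A, B) \leq (B_i, A_i)$ for every $i \geq 1$, and by the definition of the shift
\[
\sigma' = \{(A_0 \cup X, B_0 \cap Y)\} \cup \{(A_i \cap Y, B_i \cup X) : 1 \leq i \leq n\},
\]
so the task reduces to showing
\[
[r(B_0 \cap Y) - r(B_0)] + \sum_{i=1}^n [r(B_i \cup X) - r(B_i)] \leq 0.
\]

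Next I would estimate the summation by applying submodularity to each term to get $r(B_i \cup X) - r(B_i) \leq r(X) - r(B_i \cap X)$, and then using Claim \ref{c:starint} with $Z_i = B_i$ and $W = X$ to bound $\sum_{i=0}^n r(B_i \cap X)$ from below by $r(\bigcap_i B_i \cap X) + n \cdot r(X)$. The hypothesis of the claim, $\bigcap_{j \leq i} B_j \cup B_{i+1} = V$, is guaranteed by the star property: $A_{i+1} \subseteq B_j$ for $j \neq i+1$, so $A_{i+1} \subseteq \bigcap_{j \leq i} B_j$, and unioning with $B_{i+1}$ recovers $V$. Combining these estimates yields
\[
\sum_{i=1}^n [r(B_i \cup X) - r(B_i)] \leq r(B_0 \cap X) - r(\textstyle\bigcap_i B_i \cap X).
\]

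Then I would handle $r(B_0 \cap Y) - r(B_0)$ using the linkedness hypothesis. Specifically, submodularity of the order function $|.|_r$ applied to $(A_0, B_0)$ and $(X, Y)$ gives an upper bound on $|A_0 \cup X, B_0 \cap Y|_r$ in terms of $|A_0, B_0|_r$ once we know $|A_0 \cap X, B_0 \cup Y|_r \geq |X, Y|_r$; the latter follows because $(A, B) \leq (A_0 \cap X, B_0 \cup Y) \leq (X, Y)$, so linkedness of $(X, Y)$ to $(A, B)$ applies directly. Rearranged, this gives $r(B_0 \cap Y) - r(B_0) \leq r(A_0) - r(A_0 \cup X)$. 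Combining all three bounds, the whole inequality reduces to
\[
r(A_0) + r(B_0 \cap X) \leq r(A_0 \cup X) + r(\textstyle\bigcap_i B_i \cap X),
\]
which I expect to be the main obstacle. To close it, I would apply submodularity to $A_0$ and $B_0 \cap X$, using the corner identity $A_0 \cup (B_0 \cap X) = (A_0 \cup B_0) \cap (A_0 \cup X) = A_0 \cup X$ (since $A_0 \cup B_0 = V$), to obtain $r(A_0) + r(B_0 \cap X) \leq r(A_0 \cup X) + r(A_0 \cap B_0 \cap X)$; then monotonicity of $r$ together with the containment $A_0 \cap B_0 \subseteq \bigcap_i B_i$ (a direct consequence of the star property, since $A_0 \subseteq B_i$ for $i \geq 1$) yields $r(A_0 \cap B_0 \cap X) \leq r(\bigcap_i B_i \cap X)$, completing the chain.
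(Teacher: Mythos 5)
Your proof is correct and takes essentially the same route as the paper: reduce to showing the shift does not increase $\langle \cdot \rangle_r$, bound the terms $r(B_i\cup X)$ via submodularity, invoke Claim \ref{c:starint}, and use linkedness of $(X,Y)$ to $(A,B)$ applied to a corner separation lying between them. The only (cosmetic) difference is that you apply linkedness to the corner $(A_0\cap X, B_0\cup Y)=(A_0,B_0)\wedge(X,Y)$ and compensate with one extra submodularity-plus-monotonicity step, whereas the paper applies it to $(B^*\cap X, A^*\cup Y)$ with $B^*=\bigcap_{i\geq 1}B_i$ and $A^*=\bigcup_{i\geq 1}A_i$ and then uses $A^*\subseteq B_0$.
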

\begin{proof}
Suppose that $\sigma \in \cc{F}_p$, $(T,\alpha)$ is a tame $S$-tree with $\ra{e} = (t,t') \in \ra{E}(T)$,  $\sigma = \alpha({\ra{F}\!}_s)$ for some $s \in T(\ra{e}) \setminus t$, and $(X,Y)$ is linked to $\alpha(\ra{e})$. Let $(T',\alpha')$ be the shift of $(T,\alpha)$ onto $(X,Y)$ with respect to $\ra{e}$ and write $\sigma'=\alpha'({\ra{F}\!}_s) \in \cc{F}$.

Let $\sigma = \{(A_0,B_0),(A_1,B_1), \ldots, (A_n,B_n) \}$ and let $\alpha(\ra{e}) = (A,B)$. There is a unique edge $\ra{g} \in {\ra{F}\!}_s$ such that $\ra{e} \leq \ra{g}$, let us suppose without loss of generality that $\alpha(\ra{g}) = (A_0,B_0)$. Then, $(A,B) \leq (A_0,B_0)$. By definition of $\alpha'$,
\[
\sigma' = \alpha'({\ra{F}\!}_s) = \{(A_0 \cup X,B_0 \cap Y),(A_1 \cap Y ,B_1 \cup X), \ldots, (A_n \cap Y,B_n \cup X) \}.
\]
So, it will be sufficient to show that
\[
\langle \sigma' \rangle_r = r(B_0 \cap Y) + \sum_{i=1}^n r(B_i \cup X) - n. r(V) \leq \langle \sigma \rangle_r.
\]
that is,
\begin{equation}\label{e:show}
r(B_0 \cap Y) + \sum_{i=1}^n r(B_i \cup X) - n. r(V) \leq \sum_{i=0}^n r(B_i) - n. r(V) 
\end{equation}

Since $r$ is submodular, it follows that
\[
r(B_0 \cap Y) + r(B_0 \cup Y) \leq r(B_0) + r(Y)
\]
and, for each $i=1,\ldots,n$,
\[
r(B_i \cap X) + r(B_i \cup X) \leq r(B_i) + r(X).
\]
Therefore, in order to show (\ref{e:show}) it will be sufficient to show that
\begin{equation}\label{e:show2}
r(B_0 \cup Y) +  \sum_{i=1}^n r(B_i \cap X) \geq r(Y) + n. r(X)
\end{equation}

By Claim \ref{c:starint} applied to $B_1,B_2,\ldots, B_n,X$,
\[
\sum_{i=1}^n r(B_i \cap X) \geq r\left(\bigcap_{i=1}^n B_n \cap X\right) + (n-1).r(X).
\]

Let us write $A^* = \bigcup_{j=1}^n A_j$ and $B^* = \bigcap_{j=1}^n B_j$. Note that $A \subseteq A_0 \subseteq B^*$ and $B \supseteq B_0 \supseteq A^*$. Since $(X,Y)$ and $(B^*, A^*) \in \ra{S}$, so is $(X \cap B^*,Y \cup A^*)$ and $(A,B) \leq (X \cap B^*,Y \cup A^*) \leq (X,Y)$. So, since $(X,Y)$ is linked to $(A,B)$, it follows that $|X \cap B^*,Y \cup A^*|_r \geq |X,Y|_r$. Therefore by definition of $|.|_r$,
\[
r(B^* \cap X) + r(A^* \cup Y) \geq r(X) + r(Y).
\]

Since $A^* \subset B_0$ and $r$ is non-decreasing, it follows that $r(B_0 \cup Y) \geq r(A^* \cup Y)$ and so we can conclude that
\begin{align*}
r(B_0 \cup Y) +  \sum_{i=1}^n r(B_i \cap X) &\geq  r(B_0 \cup Y) +  r(B^* \cap X) + (n-1).r(X) \\
&\geq r(A^* \cup Y) + r(B^* \cap X) + (n-1).r(X) \\
&\geq r(Y) +  n .r(X).
\end{align*}
\end{proof}

\section{Linked $S_k$-trees}\label{s:link}
In this section we will prove a general theorem on the existence of linked tree-decompositions. The proof follows closely the proof of Geelen, Gerards and Whittle \cite{GGW02}, extending their result to a broader class of tree-decompositions. They consider branch decompositions of integer valued symmetric submodular functions. Given such a function $\lambda$ on a set $V$ we can consider $\lambda$ as an order function on the universe $\ra{S} = \{ (A, V \setminus A)\}$ of bipartitions of $V$ by taking $|A,V\setminus A| = \lambda(A)$, where by scaling by an additive factor we may assume $\lambda$ is a non-negative function. They defined a notion of `linked' for these decompositions and showed the following:

\begin{theorem}\label{t:branch}[\cite{GGW02}, Theorem 2.1] 
An integer-valued symmetric submodular function with branch-width $n$ has a linked branch-decomposition of width $n$.
\end{theorem}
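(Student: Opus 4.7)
The plan is to follow the classical strategy of Geelen, Gerards, and Whittle: select a branch-decomposition of $\lambda$ of width $n$ minimizing a suitable lexicographic profile of edge weights, and, if it fails to be linked, use the shifting operation of Lemma \ref{l:shift1} to construct one with strictly smaller profile, yielding a contradiction.

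First I would set up the problem inside the separation framework. View $\lambda$ as an order function on the universe $\ra{S}$ of bipartitions of $V$ via $|A, V \setminus A| := \lambda(A)$; then branch-decompositions of $\lambda$ of width at most $n$ correspond to $S_{n+1}$-trees over the family $\cc{F}$ of stars arising from ternary trees whose leaves are in bijection with $V$. Define the \emph{profile} of $(T,\alpha)$ to be the sequence $(w_n, w_{n-1}, \ldots, w_0)$, where $w_i = |\{e \in E(T) : |\alpha(\ra{e})| = i\}|$, ordered lexicographically with $w_n$ most significant. Fix a branch-decomposition $(T, \alpha)$ of width $n$ with minimum profile.

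Suppose for contradiction that $(T, \alpha)$ is not linked in the sense of Definition \ref{d:linksep}. Then there exist $\ra{e} \leq \ra{f}$ in $\ra{E}(T)$ and a separation $(X, Y)$ with $\alpha(\ra{e}) \leq (X, Y) \leq \alpha(\ra{f})$ such that $|X, Y| < m := \min\{|\alpha(\ra{g})| : \ra{e} \leq \ra{g} \leq \ra{f}\}$. Among all such triples, choose $(\ra{e}, \ra{f}, (X, Y))$ minimizing $k := |X, Y|$. The minimality of $k$ forces $(X, Y)$ to be linked to $\alpha(\ra{e})$: any $(X', Y')$ with $\alpha(\ra{e}) \leq (X', Y') \leq (X, Y)$ and $|X', Y'| < k$ would, together with $\ra{e}$ and $\ra{f}$, form a witness of smaller value, contradicting the choice of $k$. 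The symmetric argument gives that $(Y, X)$ is linked to $\alpha(\la{f})$.

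Now apply Lemma \ref{l:shift1} to shift $(T, \alpha)$ onto $(X, Y)$ with respect to $\ra{e}$, producing a tame $S_k$-tree $(T_1, \alpha_1)$ on $T(\ra{e})$ with $\alpha_1(\ra{e}) = (X, Y)$; separately shift onto $(Y, X)$ with respect to $\la{f}$ to obtain $(T_2, \alpha_2)$ on $T(\la{f})$ with $\alpha_2(\la{f}) = (Y, X)$. Splice these along the common $(X, Y)$-edge to form a new branch-decomposition $(T', \alpha')$: informally, replace the entire path $\ra{e}Tf$ by a single edge labeled $(X, Y)$, using the shifted labels from $T_1$ on the $\ra{f}$-past side and the shifted labels from $T_2$ on the $\ra{e}$-past side, while keeping the branches off the path intact. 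By Lemma \ref{l:shift1}, every edge outside the path has weight in $(T', \alpha')$ at most its original weight in $(T, \alpha)$, while the entire path (every edge of weight $\geq m > k$) is replaced by a single edge of weight $k$. So the profile of $(T', \alpha')$ is strictly smaller than that of $(T, \alpha)$, contradicting minimality.

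The main obstacle is justifying the splicing rigorously so that the result is a genuine branch-decomposition: this requires the family $\cc{F}$ to be fixed under shifting (in analogy with Lemma \ref{l:shift2}), including for the leaf stars $\{(\{v\}, V \setminus \{v\})\}$ that the shift can in principle distort, and one must ensure that the branches off the path $\ra{e}Tf$ can be consistently assigned labels from either $T_1$ or $T_2$ at their attachment points so as to yield a single ternary tree with the correct leaf bijection to $V$. This bookkeeping aside, the profile argument is immediate from Lemma \ref{l:shift1} combined with the strict inequality $k < m$.
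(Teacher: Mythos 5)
Your overall strategy (an extremal decomposition plus the shifting machinery) is the right one, and your deduction that $(X,Y)$ is linked to $\alpha(\ra{e})$ and $(Y,X)$ to $\alpha(\la{f})$ from the minimal choice of $|X,Y|$ is fine. But the step you set aside as bookkeeping -- the splice -- is exactly where the argument breaks, and it is the point where the actual proof (here: Theorem \ref{t:branch} is obtained as a special case of Theorem \ref{t:link}, since the universe of bipartitions is grounded and the branch-decomposition family of stars is fixed under shifting) does something genuinely different. You cannot replace the path $eTf$ by a single edge labelled $(X,Y)$ while keeping the off-path branches intact with the correct leaf bijection to $V$. Structurally, those branches hang at internal vertices of the path, which your splice deletes, and reattaching them at the two surviving endpoints destroys ternarity. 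More fundamentally, the shifted labels are incompatible with keeping a leaf bijection: shifting $T(\ra{e})$ onto $(X,Y)$ replaces the label of a leaf edge displaying an element $v$ by its join with $(X,Y)$, so every element outside $Y$ is emptied out of its leaf in that copy (dually for the shift onto $(Y,X)$), and neither shifted copy of an off-path branch displays the original partition classes; hence there is no consistent way to give a single copy of each branch ``labels from $T_1$ or $T_2$''. This is why the construction of \cite{GGW02}, and of Theorem \ref{t:link}, takes the disjoint union of the two shifted trees $T(\ra{e})$ and $T(\la{f})$ and identifies them along the edge mapped to $(X,Y)$: the middle part (path plus off-path branches) is deliberately duplicated, and one works with $S_k$-trees over a family of stars fixed under shifting (which contains the degenerate leaf stars the shift creates), recovering a genuine branch-decomposition only at the end via the correspondence from \cite{DO142}.

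Once the correct glued construction is used, your claimed strict decrease of the profile fails as stated. High-order path edges do not disappear: in the copy of $T(\ra{e})$ shifted onto $(X,Y)$ the edge $f$ keeps its label $(C,D)$, and in the other copy $e$ keeps $(A,B)$, so edges of order $\geq m$ survive; moreover every off-path edge now has two copies, so the edge counts could a priori even increase. Two ingredients you omit are needed to repair this. First, $(X,Y)$ must be chosen, among the minimum-order separations between $\alpha(\ra{e})$ and $\alpha(\ra{f})$, nested with as many separations of $\alpha(\ra{E}(T))$ as possible; combined with groundedness of the bipartition universe this gives Claim \ref{c:sepeq}, i.e.\ that for each duplicated edge at most one of its two copies retains its original order while the other drops to order at most $|X,Y|$, whence the number of edges of order $\geq p$ does not increase for $p > |X,Y|$. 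Second, the minimized potential must also record the numbers of components $c(T_p)$ of the high-order subforests, because after the first step the edge counts need not strictly drop: the final contradiction is a component-counting one (the surviving copies of $e$ and $f$ keep their orders in \emph{different} halves of the glued tree although $e$ and $f$ lie in a common component of the subforest of edges of order $> |X,Y|$), not a strict decrease in the number of high-order edges. With only your lexicographic edge profile and the splice as described, the proof does not go through.
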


A direct application of Theorem \ref{t:branch} gives analogues of Theorem \ref{t:thomlink} for branch decompositions of matroids or graphs \cite{GGW02}, and also rank-decompositions of graphs \cite{O05}.

It can be shown that a branch-decomposition of $\lambda$ is equivalent to an $S$-tree over a set $\cc{T}$ of stars which is fixed under shifting (See the proof of [\cite{DO142}, Lemma 4.3]), and that the width of this branch-decomposition is the smallest $k$ such that the $S$-tree is an $S_k$-tree. Furthermore, in this way the definition of linked given in $\cite{GGW02}$ coincides with Definition \ref{d:linksep}. 

Let us say that a universe of set separations $\ra{S}$ with an order function $|.|$ is \emph{grounded} if it satisfies the conclusion of Lemma \ref{l:un}, that is, if for every pair of separations if $(B,A) \leq (C,D)$ then $|A \cup C, B \cap D| \leq |C,D|$. So, Lemma \ref{l:un} says that $\ra{S}$ is grounded whenever $|.|=|.|_r$ for some non-negative non-decreasing submodular function $r: 2^{V} \rightarrow \N$. We note that, if $\ra{S}$ is the universe of bipartitions of a set then $\ra{S}$ is grounded for any order function $|.|$. Indeed, since $|.|$ is symmetric and submodular, it follows that for any bipartition $(X,Y)$, 
\[
2|X,Y| = |X,Y| + |Y,X| \geq |X \cup Y , X \cap Y| + |X \cap Y, X \cup Y| = 2|V,\emptyset|
\]
and so, if $(B,A) \leq (C,D)$ then $|A \cup C, B \cap D| = |V, \emptyset | \leq |C,D|$. In this way the following theorem implies, and generalises Theorem \ref{t:branch}.
\begin{theorem}\label{t:link}
Let $\ra{S}$ be a grounded universe of set separations with an order function $|.|$, $k \in \N$, and let $\cc{F} \subset \N^{{\ra{S}\!}_k}$ be a family of stars which is fixed under shifting. If there exists an $S_k$-tree over $\cc{F}$, then there exists a linked $S_k$-tree over $\cc{F}$.
\end{theorem}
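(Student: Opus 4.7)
My plan is to take an $S_k$-tree $(T,\alpha)$ over $\cc{F}$ minimising a well-founded complexity measure, and show that such a tree must be linked, by contradiction. For the complexity I will use the multiset of values $|\alpha(\ra{g})|$, $g\in E(T)$, sorted in decreasing order and compared lexicographically; since each entry lies in $\{0,\ldots,k-1\}$ this is a well-order, so a minimiser exists.

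Assume for contradiction that the minimiser $(T,\alpha)$ is not linked. Then there exist edges $\ra{e}\leq\ra{f}$ in $\ra{E}(T)$ with $\lam{\alpha(\ra{e}),\alpha(\ra{f})}<\min\{|\alpha(\ra{g})|:\ra{e}\leq\ra{g}\leq\ra{f}\}$. Submodularity of $|\cdot|$ makes the set of $|\cdot|$-minimisers on the interval $[\alpha(\ra{e}),\alpha(\ra{f})]$ a sublattice, so it has a greatest element $(X,Y)$. A standard argument then shows $(X,Y)$ is linked to $\alpha(\ra{e})$ in the sense of the definition preceding Lemma \ref{l:shift1}: any $(Z,W)\in[\alpha(\ra{e}),(X,Y)]$ lies in $[\alpha(\ra{e}),\alpha(\ra{f})]$ and so satisfies $|Z,W|\geq|X,Y|$. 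Applying the involution and using that $|\cdot|$ is symmetric gives the dual statement that $(Y,X)$ is linked to $\alpha(\la{f})$.

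I then apply Lemma \ref{l:closedundershifting} twice: shifting $(T,\alpha)$ with respect to $\ra{e}$ onto $(X,Y)$ produces an $S_k$-tree $(T_1,\alpha_1)$ over $\cc{F}\cup\{(Y,X)\}$ on the subtree $T(\ra{e})$, and shifting with respect to $\la{f}$ onto $(Y,X)$ produces $(T_2,\alpha_2)$ over $\cc{F}\cup\{(X,Y)\}$ on $T(\la{f})$. By Lemma \ref{l:shift1} no edge order increases in either shift. I combine $(T_1,\alpha_1)$ and $(T_2,\alpha_2)$ into a single $S_k$-tree $(T',\alpha')$ over $\cc{F}$ by splicing along the pendant edges carrying $(X,Y)$: labels on the $\ra{e}$-tail side of the path $\ra{e}T\ra{f}$ come from $\alpha_2$, labels on the $\ra{f}$-head side come from $\alpha_1$, and a single distinguished edge on the original path is installed carrying $(X,Y)$. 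The fixed-under-shifting property of $\cc{F}$, together with Lemma \ref{l:closedundershifting}, gives that every interior star of $T'$ lies in $\cc{F}$, and the two stars at the endpoints of the distinguished $(X,Y)$-edge can be recognised as stars already appearing in $T_1$ or $T_2$. Groundedness of $\ra{S}$ (via Lemma \ref{l:un}) is what makes the shift behave submodularly on the \emph{other} side of the splice as well, so that label orders remain controlled throughout $T'$.

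Every edge of $T'$ then has order at most that of its counterpart in $T$, while the distinguished edge has order $|X,Y|$ strictly below every $|\alpha(\ra{g})|$ on the original path $\ra{e}T\ra{f}$; hence the complexity of $(T',\alpha')$ is strictly smaller than that of $(T,\alpha)$, contradicting minimality. The main obstacle will be the splicing step: $T(\ra{e})$ and $T(\la{f})$ overlap on the entire middle portion of $T$ where $\alpha_1$ and $\alpha_2$ genuinely disagree, so orchestrating a coherent combination whose stars all land in $\cc{F}$ and whose edge orders are no larger than the originals is the technical heart of the argument, drawing together the fixed-under-shifting hypothesis on $\cc{F}$ and the groundedness of $\ra{S}$.
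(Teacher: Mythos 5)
Your set-up (take a complexity-minimal $S_k$-tree over $\cc{F}$, pick a minimum-order $(X,Y)$ between $\alpha(\ra{e})$ and $\alpha(\ra{f})$, observe it is linked to $\alpha(\ra{e})$ and $(Y,X)$ to $\alpha(\la{f})$, and shift both ways) matches the paper, but the step you defer as ``the technical heart'' is a genuine gap, and it cannot be filled in the way you describe. There is no working ``splice'' of $(T_1,\alpha_1)$ and $(T_2,\alpha_2)$ along the path from $\ra{e}$ to $\ra{f}$: on the whole overlap $T(\ra{e})\cap T(\la{f})$ the two labellings genuinely disagree, and at any cut vertex the resulting edge-labels mix separations shifted onto $(X,Y)$ with separations shifted onto $(Y,X)$ together with the new $(X,Y)$-edge; nothing guarantees this multiset is even a star, let alone in $\cc{F}$ --- being fixed under shifting only controls stars lying entirely inside one shifted copy. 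The only combination certified by Lemma \ref{l:closedundershifting} to stay over $\cc{F}$ is the paper's: take the \emph{disjoint union} of the shifted trees on $T(\ra{e})$ and $T(\la{f})$ and identify the two edges carrying $(X,Y)$, so that every vertex keeps a star coming wholly from one shift.

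With that construction, however, your complexity measure breaks down. The combined tree $\hat{T}$ contains two shifted copies of everything between $e$ and $f$, hence strictly more edges than $T$, and Lemma \ref{l:shift1} only gives $|\hat{\alpha}({\ra{w}}_i)|\leq|\alpha(\ra{w})|$ with no strict drop guaranteed above $\ell$. In the critical scenario where every edge of order $>\ell$ keeps its order in exactly one copy and only the other copy collapses, the sorted-decreasing lexicographic multiset of edge orders of $\hat{T}$ is \emph{not} smaller than that of $T$ --- the extra low-order edges make it larger --- so minimality yields no contradiction. This is precisely why the paper (a) chooses $(X,Y)$ nested with as many separations of $\alpha(\ra{E}(T))$ as possible, which via Lemma \ref{l:corner} and groundedness yields Claim \ref{c:sepeq} (if one copy of an edge keeps its order, the other falls to $\leq\ell$), and (b) uses the finer order $\prec$ recording, level by level, both $e(T_p)$ and the component count $c(T_p)$. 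Minimality in $\prec$ then forces, for each $p>\ell$, the surviving copy-index to be constant on each component of $T_p$, and the contradiction is that $\alpha_2({\ra{e}}_2)=(A,B)$ and $\alpha_1({\la{f}}_1)=(C,D)$ survive in \emph{different} copies although $e$ and $f$ lie in one component of $T_{\ell+1}$. Your measure carries no component information, and your choice of $(X,Y)$ as the greatest element of the minimiser sublattice does not provide what Claim \ref{c:sepeq} needs, so even granting a combined tree over $\cc{F}$ the argument could not be completed as proposed.
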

\begin{proof}
Let $(T,\alpha)$ be an $S_k$-tree over $\cc{F}$. Note that, since $\cc{F}$ is a family of stars, $(T,\alpha)$ is tame. We write $T_p$ for the subforest of $T$ where $\ra{e} \in T_p$ if and only if $|\alpha(\ra{e})| \geq p$. Let us write $e(T_p)$ for the number of edges of $T_p$ and $c(T_p)$ for the number of components of $T_p$.

We define an order on the set 
\[
\cc{T} = \{ (T,\alpha) \,: \, (T,\alpha) \text{ an } S_k \text{-tree over } \cc{F}\}
\]
as follows. We say that $(T,\alpha) \prec (S,\alpha')$ if there exists a $p \in \mathbb{N}$ such that for all $p'>p$, $e(T_{p'}) = e(S_{p'})$ and $c(T_{p'}) = c(S_{p'})$ and either:
\begin{itemize}
\item $e(T_p) < e(S_p)$, or
\item $e(T_p)=e(S_p)$ and $c(T_p) > c(S_p)$.
\end{itemize}
Let $(T,\alpha)$ be a $\prec$-minimal element of $\cc{T}$. We claim that $(T,\alpha)$ is linked.

Suppose not, that is, there are two edges $\ra{e} \leq \ra{f}$ such that there is no $\ra{g} \in \ra{E}(T)$ with $\ra{e} \leq \ra{g} \leq \ra{f}$ and
\[
|\alpha(\ra{g})| = \lam{\alpha(\ra{e}),\alpha(\ra{f})} =: \lam{(A,B),(D,C)}.
\]
Now, there is some separation $(A,B) \leq (X,Y) \leq (D,C)$ such that 
\[
|X,Y| = \lam{(A,B),(D,C)} =: \ell.
\]
Let us choose such an $(X,Y)$ which is nested with a maximal number of separations in $\alpha\left( \ra{E}(T) \right)$. Note that $(X,Y)$ is linked to $(A,B)$ and $(Y,X)$ is linked to $(C,D)$.

\begin{figure}[ht!]
\begin{center}
\begin{tikzpicture}

\node[draw, circle,scale=.2, fill] at (0,0) {};
\node[draw, circle,scale=.2, fill] at (-3,0) {};
\node[draw, circle,scale=.2, fill] at (-4,1) {};
\node[draw, circle,scale=.2, fill] at (-4,-1) {};
\node[draw, circle,scale=.2, fill] at (1,1) {};
\node[draw, circle,scale=.2, fill] at (1,-1) {};
\node[draw, circle,scale=.2, fill] at (3,0) {};
\node[draw, circle,scale=.2, fill] at (2,1) {};
\node[draw, circle,scale=.2, fill] at (6,0) {};
\node[draw, circle,scale=.2, fill] at (7,1) {};
\node[draw, circle,scale=.2, fill] at (7,-1) {};

\draw (-4,1)--(-3,0)--(0,0)--(3,0)--(6,0)--(7,1) (7,-1)--(6,0) (2,1)--(3,0) (1,1)--(0,0)--(1,-1) (-4,-1)--(-3,0);

\node at (-1.5,0.5) {$\ra{e}$};
\node at (4.5,0.5) {$\ra{f}$};

\end{tikzpicture}
\end{center}
\caption{The tree $T$.}
\end{figure}
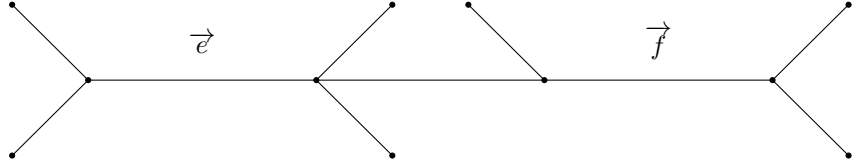

Let $(T_1,\alpha_1)$ be the $S_k$-tree given by shifting $(T,\alpha)$ onto $(X,Y)$ with respect to $\ra{e}$ and let $(T_2,\alpha_2)$ be the $S_k$-tree given by shifting $(T,\alpha)$ onto $(Y,X)$ with respect to $\la{f}$. By Lemmas \ref{l:shift1} and \ref{l:closedundershifting}, $(T_1,\alpha_1)$ and $(T_2,\alpha_2)$ are $S_k$-trees over $\cc{F} \cup \{(Y,X)\}$ and $\cc{F} \cup \{(X,Y)\}$ respectively. For each vertex and edge in $T$ let us write $v_1$ and $v_2$ or $e_1$ and $e_2$ for the copy of $v$ or $e$ in $T_1$ and $T_2$ respectively, if it exists (Note that, since $T_1=T(\ra{e})$ and $T_2 = T(\la{f})$, not every vertex or edge will appear in both trees). We let $(\hat{T},\hat{\alpha})$ be the following $S_k$-tree:

$\hat{T}$ is the tree formed by taking the disjoint union of $T_1$ and $T_2$, and identifying the edge $\ra{e}_1$ with the edge $\ra{f}_2$. $\hat{\alpha}$ is then formed by taking union of $\alpha_1$ and $\alpha_2$ on the domain $\ra{E}(\hat{T})$ where we note that, since $\alpha_1(\ra{e}) = (X,Y) = \alpha_2(\ra{f})$ this map is well defined. By Lemma \ref{l:closedundershifting} $(\hat{T},\hat{\alpha})$ is an $S_k$-tree over $\cc{F}$.

\begin{figure}[ht!]
\begin{center}
\begin{tikzpicture}[scale=0.6]

\node[draw, circle,scale=.2, fill] at (0,0) {};
\node[draw, circle,scale=.2, fill] at (-3,0) {};
\node[draw, circle,scale=.2, fill] at (-6,0) {};
\node[draw, circle,scale=.2, fill] at (3,0) {};
\node[draw, circle,scale=.2, fill] at (6,0) {};
\node[draw, circle,scale=.2, fill] at (9,0) {};
\node[draw, circle,scale=.2, fill] at (-7,1) {};
\node[draw, circle,scale=.2, fill] at (-2,1) {};
\node[draw, circle,scale=.2, fill] at (-1,1) {};
\node[draw, circle,scale=.2, fill] at (4,1) {};
\node[draw, circle,scale=.2, fill] at (5,1) {};
\node[draw, circle,scale=.2, fill] at (10,1) {};
\node[draw, circle,scale=.2, fill] at (-7,-1) {};
\node[draw, circle,scale=.2, fill] at (-2,-1) {};
\node[draw, circle,scale=.2, fill] at (4,-1) {};
\node[draw, circle,scale=.2, fill] at (10,-1) {};

\draw (-6,0)--(9,0) (-7,1)--(-6,0)--(-7,-1) (-2,1)--(-3,0)--(-2,-1) (-1,1)--(0,0) (4,1)--(3,0)--(4,-1) (5,1)--(6,0) (10,1)--(9,0)--(10,-1);

\draw [red] plot [smooth] coordinates {(10,3) (0,0) (10,-3)};
\draw [blue] plot [smooth] coordinates {(-7,3) (3,0) (-7,-3)};

\node[red] at (4.5,2.2) {$T(\ra{e})$};
\node[blue] at (-1.5,2.2) {$T(\la{f})$};

\node at (7.5,0.5) {${\ra{f}\!}_1$};
\node at (-4.5,0.5) {${\ra{e}\!}_2$};

\end{tikzpicture}
\end{center}
\caption{The tree $\hat{T}$.}
\end{figure}
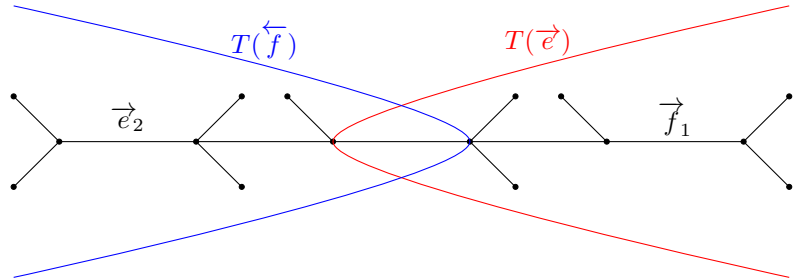
Note that, since $(X,Y)$ is linked to $(A,B)$ and $(Y,X)$ is linked to $(C,D)$ it follows from Lemma \ref{l:shift1} that for every $w \in T(\ra{e}) \cap T(\la{f})$
\[
|\alpha(\ra{w})| \geq \max \{ |\hat{\alpha}({\ra{w}\!}_1)|, |\hat{\alpha}({\ra{w}\!}_2)|\}.
\]

\begin{claim}\label{c:sepeq}
If $|\alpha(\ra{w})| >\ell$ and 
\[
|\alpha(\ra{w})| =|\hat{\alpha}({\ra{w}\!}_i)|
\]
then, 
\[
|\hat{\alpha}({\ra{w}\!}_{2-i})| \leq \ell.
\]
\end{claim}
\begin{remark}
For ease of exposition, we consider this to be vacuously satisfied if ${\ra{w}\!}_{2-i}$ doesn't exist.
\end{remark}
\begin{proof}
Indeed, suppose without loss of generality that $|\alpha(\ra{w})| =|\hat{\alpha}({\ra{w}\!}_1)|$ and that $\ra{e} \leq \ra{w}$. Note that, $\alpha(\ra{e}) = (A,B) \leq \alpha(\ra{w}) =: (E,F)$. Then,
\[
\hat{\alpha}({\ra{w}\!}_1) = (E,F)  \vee (X,Y).
\]
Since $|\hat{\alpha}({\ra{w}\!}_1)|=|E,F|$, by submodularity of the order function $|(E,F) \wedge (X,Y)| \leq |X,Y|$. However,
\[
(A,B) \leq (E,F) \wedge (X,Y)\leq (D,C)
\]
and so  $(E,F) \wedge (X,Y)$ was a candidate for $(X,Y)$. Moreover, since $(E,F)$ is nested with every separation in $\alpha\left(\ra{E}(T)\right)$, it follows from Lemma \ref{l:corner} that $(E,F) \wedge (X,Y)$ is nested with every separation in $\alpha\left(\ra{E}(T)\right)$ that $(X,Y)$ is, and also with $(E,F)$ itself. Hence, by our choice of $(X,Y)$, it follows that $(X,Y)$ was already nested with $(E,F)$. 

We may suppose that $w \in T(\ra{e}) \cap T(\la{f})$, since otherwise there is no ${\ra{w}\!}_2$. Hence, since $\ra{e} \leq \ra{w}$, there are two cases to consider, either $\ra{w} \leq \ra{f}$ or $\la{w} \leq \ra{f}$. Let us suppose first that $\ra{w} \leq \ra{f}$. Then, by definition, $\alpha_1({\ra{w}\!}_1) = (E,F) \vee (X,Y) = (E \cup X, F \cap Y)$ and $\alpha_2({\la{w}\!}_2) = (F,E) \vee (Y,X) = (F \cup Y, E \cap X)$.

There are now four cases as to how $(E,F)$ and $(X,Y)$ are nested. If $(E,F) \leq (X,Y)$ then $\alpha_1({\ra{w}\!}_1) = (X,Y)$, contradicting our assumption that $|\hat{\alpha}({\ra{w}\!}_1)| > \ell$. If $(F,E) \leq (Y,X)$ then $\alpha_2({\la{w}\!}_2) = (Y,X)$, and so $|\hat{\alpha}({\ra{w}\!}_2)| = \ell$.

If $(F,E) \leq (X,Y)$ then, since $\ra{S}$ is grounded, $|E \cup X, F \cap Y| = |\alpha_1({\ra{w}\!}_1)| \leq |X,Y| = \ell$, again a contradiction, and if $(E,F) \leq (Y,X)$ then $|F \cup Y, E \cap X| = |\alpha_2({\la{w}\!}_2)| \leq |X \cap Y| = \ell$.

Suppose then that $\la{w} \leq \ra{f}$. Again, $\alpha_1({\ra{w}\!}_1) = (E,F) \vee (X,Y) = (E \cup X, F \cap Y)$ and in this case $\alpha_2({\ra{w}\!}_2) = (E,F) \vee (Y,X) = (E \cup Y, F \cap X)$.

As before, there are four cases as to how $(E,F)$ and $(X,Y)$ are nested. If $(E,F) \leq (X,Y)$ then $\alpha_1({\ra{w}\!}_1) = (X,Y)$, a contradiction, and if $(E,F) \leq (Y,X)$ then $\alpha_2({\ra{w}\!}_2) = (Y,X)$, and $|\hat{\alpha}({\ra{w}\!}_2)| = \ell$.

If $(F,E) \leq (X,Y)$ then, since $\ra{S}$ is grounded, $|E \cup X, F \cap Y| = |\alpha_1({\ra{w}\!}_1)| \leq |X \cap Y| = \ell$, a contradiction, and finally if $(F,E) \leq (Y,X)$ then $|E \cup Y, F \cap X| = |\alpha_2({\ra{w}\!}_2)| \leq |X,Y| = \ell$.
\end{proof}

\begin{claim}\label{c:sepcontra}
For every $p > l$ and every $\ra{w} \in \ra{E}(T)$ with $|\alpha(\ra{w})| = p$ exactly one of $\hat{\alpha}({\ra{w}\!}_1),\hat{\alpha}({\ra{w}\!}_2)$ has order $p$, and the other has order $\leq \ell$. Furthermore, for each component $C$ of $T_p$ if $|\hat{\alpha}({\ra{w}\!}_i)| = |\alpha(\ra{w})|$ for some $\ra{w} \in C$, then $|\hat{\alpha}({\ra{w'}\!}_i)|= |\alpha(\ra{w'})|$ for every $\ra{w'} \in C$.
\end{claim}
\begin{proof}[Proof of Claim]
Suppose for contradiction that $p>l$ is the largest integer where the claim fails to hold. It follows that $e(\hat{T}_{p'}) = e(T_{p'})$ and $c(\hat{T}_{p'}) = c(T_{p'})$  for all $p' > p$. Hence, by $\prec$-minimality of $T$, $e(\hat{T}_p) \geq e(T_p)$. However, by assumption, for every separation $\alpha(\ra{w})$ of order $>p$ exactly one of $\hat{\alpha}({\ra{w}\!}_1), \hat{\alpha}({\ra{w}\!}_2)$ has the same order, and the other has order $\leq \ell$. Also, if $|\alpha(\ra{w})| = p > \ell$, then $|\alpha(\ra{w})| \geq \max \{ |\hat{\alpha}({\ra{w}\!}_1)|, |\hat{\alpha}({\ra{w}\!}_2)| \}$ and by Claim \ref{c:sepeq} if one of $\hat{\alpha}({\ra{w}\!}_1), \hat{\alpha}({\ra{w}\!}_2)$ has the same order as $\alpha(\ra{w})$ then the other has order $\leq \ell$. 

Hence, $e(\hat{T}_p) \leq e(T_p)$, and so by $\prec$-minimality of $T$ it follows that $e(\hat{T}_p) = e(T_p)$, and for every $\ra{w}$ with $|\alpha(\ra{w})|=p$ exactly one of $\hat{\alpha}({\ra{w}\!}_1), \hat{\alpha}({\ra{w}\!}_2)$ is of order $p$, and the other has order $\leq \ell$.

Recall that $\hat{T}$ was formed by joining a copy of $T(\ra{e})$ and $T(\la{f})$ along a separation $(X,Y)$ of order $\ell < p$. It follows from the first part of the claim that $c(\hat{T}_p)\geq c(T_p)$, and so by $\prec$-minimality of $T$, $c(\hat{T}_p)= c(T_p)$. Hence, for each component $C$ of $T_p$ if $|\hat{\alpha}({\ra{w}\!}_i)| = |\alpha(\ra{w})|$ for some $\ra{w} \in C$, then $|\hat{\alpha}({\ra{w'}\!}_i)|= |\alpha(\ra{w'})|$ for every $\ra{w'} \in C$. Therefore the claim holds for $p$, contradicting our assumption.
\end{proof}

By assumption, $\ra{e}$ and $\ra{f}$ lie in the same component of $T_{\ell+1}$, since for every $\ra{e} \leq \ra{g} \leq \ra{f}$
\[
|\alpha(\ra{g})| > |X,Y| = \ell.
\]
However, $\alpha_2({\ra{e}\!}_2) = (A,B)$ and $\alpha_1({\la{f}\!}_1) = (C,D)$, contradicting Claim \ref{c:sepcontra}.
\end{proof}

\section{$\cc{F}$-lean $S_k$-trees}\label{s:lean}
\begin{theorem}\label{t:lean}
Let $\ra{S}$ be a universe of set separations with an order function $|.|_r$ for some non-negative non-decreasing submodular function $r: 2^V \rightarrow \N$, $k \in \N$, and let $\cc{F} \subset \N^{{\ra{S}\!}_k}$ be a family of stars which is fixed under shifting. If there exists an $S_k$-tree over $\cc{F}$, then there exists an $\cc{F}$-lean $S_k$-tree over $\cc{F}$.
\end{theorem}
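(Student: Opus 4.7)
The plan is to follow the proof of Theorem~\ref{t:link} as closely as possible, with $\prec$ the same partial order on the family $\cc{T}$ of $S_k$-trees over $\cc{F}$. Take $(T,\alpha)$ a $\prec$-minimal element and assume for contradiction it is not $\cc{F}$-lean: pick vertices $t,t'$ and separations $(A,B)\leq(B',A')$ with $(A,B)$ addable at $t$ and $(A',B')$ addable at $t'$ for which $\ell:=\lam{(A,B),(B',A')}<\min\{r(A),r(A')\}$ and $|\alpha(\ra{g})|>\ell$ for every $\ra{g}\in tTt'$. Fix a separation $(X,Y)$ with $(A,B)\leq(X,Y)\leq(B',A')$ of order $\ell$, chosen to be nested with as many of the separations in $\alpha(\ra{E}(T))$ as possible; then $(X,Y)$ is linked to $(A,B)$ and $(Y,X)$ is linked to $(A',B')$.

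To build a new $S_k$-tree $\hat{T}$ over $\cc{F}$, I perform a two-sided shift. Attach a new leaf $\ell_1$ to $t$ by an edge labelled $(A,B)$, forming the extension $T_A$; by addability, every star of $T_A$ except that of $\ell_1$ lies in $\cc{F}$. By Lemmas~\ref{l:shift1} and~\ref{l:closedundershifting}, the shift of $T_A$ onto $(X,Y)$ with respect to $(\ell_1,t)$ is a tame $S_k$-tree $(T_1,\alpha_1)$ over $\cc{F}\cup\{(Y,X)\}$. Symmetrically, attach a leaf $\ell_2$ to $t'$ with edge labelled $(A',B')$ and shift onto $(Y,X)$ to obtain $(T_2,\alpha_2)$. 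Let $\hat{T}$ be the tree obtained by deleting $\ell_1,\ell_2$ and pasting the two resulting trees along a new edge joining the $t$-copy of $T_1$ to the $t'$-copy of $T_2$, labelled $(X,Y)$. The star at $t$ in $\hat{T}$ coincides with the shifted star at $t$ in $T_1$, which lies in $\cc{F}$ by the fixed-under-shifting property applied to $\sigma_t\cup\{(A,B)\}$; similarly at $t'$; all other stars are inherited unchanged from $T_1$ or $T_2$. Since $\ell<k$, the bridge lies in ${\ra{S}\!}_k$, so $\hat{T}$ is a tame $S_k$-tree over $\cc{F}$.

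The heart of the proof is to show $\hat{T}\prec T$, contradicting minimality. This proceeds by two claims modelled on the linked proof. The analogue of Claim~\ref{c:sepeq} asserts that for every $\ra{w}\in\ra{E}(T)$ with $|\alpha(\ra{w})|>\ell$, exactly one of its two copies in $\hat{T}$ has order $|\alpha(\ra{w})|$ while the other has order at most $\ell$. The key new input is that addability of $(A,B)$ at $t$ gives $(A,B)\leq\alpha(\ra{w}_t)$ for every edge $\ra{w}$ of $T$ oriented away from $t$ (by transitivity through $\sigma_t$), and symmetrically for $t'$; consequently $\alpha(\ra{w})\wedge(X,Y)$ is a valid candidate of order $\ell$, and maximality of nestedness forces $(X,Y)$ to be nested with $\alpha(\ra{w})$. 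A four-case analysis using the grounded property (Lemma~\ref{l:un}) then produces the bound. The consistency claim corresponding to Claim~\ref{c:sepcontra} follows by the same $\prec$-minimality induction as in the linked proof.

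The inequality $\ell<\min\{r(A),r(A')\}$ becomes decisive for the final contradiction. For any path separation $\alpha(\ra{e}_i)$, the two ``flipped'' nestings $\alpha(\ra{e}_i)\leq(Y,X)$ and $(Y,X)\leq\alpha(\ra{e}_i)$ would force $A\subseteq X\cap Y$ and $A'\subseteq X\cap Y$ respectively; submodularity then gives $r(X\cap Y)\leq r(X)+r(Y)-r(V)=\ell$, contradicting $\ell<r(A)$ or $\ell<r(A')$. Hence every path separation lies in one of the two ``standard'' cases $\alpha(\ra{e}_i)\leq(X,Y)$ (preserved side $T_2$) or $(X,Y)\leq\alpha(\ra{e}_i)$ (preserved side $T_1$). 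Combined with $(A,B)\leq(X,Y)\leq(B',A')$ and monotonicity of the chain, this forces a transition between adjacent path edges that lie in a single component of $T_{\ell+1}$, contradicting the consistency claim. I anticipate the main obstacle to be the corner case where no transition falls strictly between actual path edges (the whole chain already lying on one side of $(X,Y)$); this should be handled by shrinking the interval defining $(X,Y)$ to a subinterval bounded by a path separation and invoking the same argument, or by treating the case $t=t'$ separately.
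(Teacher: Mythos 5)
Your construction of $\hat{T}$ (attach leaves at $t$ and $t'$ labelled by the addable separations, shift onto $(X,Y)$ resp.\ $(Y,X)$, and glue along an edge labelled $(X,Y)$) is exactly the paper's, and your Claim~\ref{c:sepeq}-analogue is in the right spirit. The genuine gap is your choice of potential: you reuse the edge-based order from the linked proof (counting edges with $|\alpha(\ra{e})|\geq p$ and components of $T_p$), but leanness is a statement about the stars $\langle\sigma_s\rangle_r$ at the \emph{vertices}, and the contradiction must detect that the stars at $t$ and $t'$ strictly shrink under the shift. The paper instead orders $S_k$-trees by the number of vertices $s$ with $\langle\sigma_s\rangle_r$ at least a given threshold (together with component counts of the induced subforests), and proves a new claim with no analogue in the linked proof: $\langle\hat{\sigma}_{t_1}\rangle_r<\langle\sigma_t\rangle_r$ and $\langle\hat{\sigma}_{t'_2}\rangle_r<\langle\sigma_{t'}\rangle_r$, which is precisely where the hypothesis $\ell<\min\{r(A),r(A')\}$ is spent (via submodularity, $r(A)>|X,Y|_r$ forces $r(A\cup Y)>r(Y)$). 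You use that hypothesis only to exclude the ``flipped'' nestings, which is the easier half of the paper's Claim~\ref{c:equal}.

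Concretely, with your edge-based order the argument breaks in two places. First, in the case $t=t'$ -- which the introduction identifies as the real content of leanness -- the path $tTt'$ has no edges, so there is no pair $\ra{e}\leq\ra{f}$ of high-order edges to work with; the glued tree need not be smaller in the edge-based order at all (each edge of $T$ contributes one surviving copy of the same order plus one copy of order $\leq\ell$, and the bridge has order $\ell$), so $\prec$-minimality yields no contradiction. You explicitly defer this case (``treating the case $t=t'$ separately'') without a method, but it is the essential case, handled in the paper by Claim~\ref{c:size} against the vertex-based minimality. Second, even for $t\neq t'$ the linked proof's endgame does not transfer: there the shifts were taken with respect to the path edges themselves, so $\alpha_2({\ra{e}\!}_2)=(A,B)$ and $\alpha_1({\la{f}\!}_1)=(C,D)$ survived with full order on opposite sides of one component of $T_{\ell+1}$. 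Here the shifts are with respect to the new leaf edges, and one only knows $(A,B)\leq\alpha(\ra{e})$ for the first path edge, not $\alpha(\ra{e})\leq(X,Y)$; hence no path edge is guaranteed to keep or lose its order on a prescribed side, and the ``transition between adjacent path edges'' you hope to force is exactly what needs proving. In the paper it is supplied by the strict decrease of the bag sizes at $t_1$ and $t'_2$ combined with the component-consistency claim for the vertex-based subforests, not by any statement about edge orders.
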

\begin{proof}
Let $(T,\alpha)$ be an $S_k$-tree over $\cc{F}$. Note that, since $\cc{F}$ is a family of stars, $(T,\alpha)$ is tame. We write $T^p$ for the induced subforest of $T$ on the set of vertices
\[
V(T^p) = \{ t \in T \, : \,  \langle \sigma_t \rangle_r \leq p \}
\]
Let us write $v(T^p)$ for the number of vertices of $T^p$ and $c(T^p)$ for the number of components.

We define an order on the set 
\[
\cc{T} = \{ (T,\alpha) \,: \, (T,\alpha) \text{ an } S_k \text{-tree over } \cc{F}\}
\]
as follows. We say that $(T,\alpha) \prec (S,\alpha')$ if there exists an $p \in \mathbb{N}$ such that for all $p'>p$ $v(T^{p'}) = v(S^{p'})$ and $c(T^{p'}) = c(S^{p'})$ and either:
\begin{itemize}
\item $v(T^p) < v(S^p)$, or
\item $v(T^p)=v(S^p)$ and $c(T^p) > c(S^p)$.
\end{itemize}
Let $(T,\alpha)$ be a $\prec$-minimal element of $\cc{T}$. We claim that $(T,\alpha)$ is $\cc{F}$-lean.

Suppose not, then there is some pair of vertices $t,t' \in T$ and a pair of separations $(A,B) \leq (B',A')$ such that $(A,B)$ is addable at $t$ and $(A',B')$ is addable at $t'$, with
\[
\lambda\big( (A,B) , (B,A) \big) <  \min \{ r(A), r(A') \}
\]
and
\[
|\alpha(\ra{g})| > \lambda\big( (A,B) , (B',A') \big) \text{ for all } \ra{g} \in t T t'.
\]

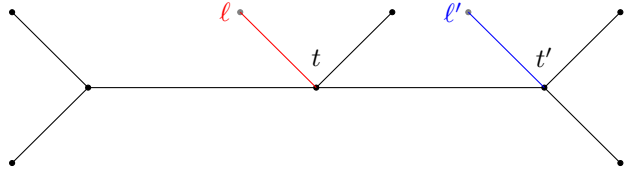
\begin{figure}[ht!]
\begin{center}
\begin{tikzpicture}

\node[draw, circle,scale=.2, fill] at (0,0) {};
\node[draw, circle,scale=.2, fill] at (-3,0) {};
\node[draw, circle,scale=.2, fill] at (3,0) {};
\node[draw, circle,scale=.2, fill] at (-4,-1) {};
\node[draw, circle,scale=.2, fill] at (-4,1) {};
\node[draw, circle,scale=.2, fill] at (1,1) {};
\node[draw, circle,scale=.2, fill] at (4,1) {};
\node[draw, circle,scale=.2, fill] at (4,-1) {};
\node[draw, gray, circle,scale=.2, fill] at (-1,1) {};
\node[draw, gray, circle,scale=.2, fill] at (2,1) {};

\draw (-3,0)--(3,0) (-4,1)--(-3,0)--(-4,-1) (1,1)--(0,0) (4,1)--(3,0)--(4,-1);
\draw[red] (0,0)--(-1,1);
\draw[blue] (3,0)--(2,1);

\node at (0,0.4) {$t$};
\node at (3,0.4) {$t'$};

\node[red] at (-1.2,1) {$\ell$};
\node[blue] at (1.8,1) {$\ell'$};

\end{tikzpicture}
\end{center}
\caption{The tree $T$.}
\end{figure}

Let $(T_1,\alpha_1)$ be the $S_k$-tree formed in the following way: $T_1$ is formed from $T$ by adding an extra leaf $\ell$ at $t$, $\alpha_1 \restriction \ra{E}(T) = \alpha$, and $\alpha_1(\ell,t) = (A,B)$. Similarly let $(T_2,\alpha_2)$ be the $S_k$-tree formed in the following way: $T_2$ is formed from $T$ by adding an extra leaf $\ell'$ at $t'$, $\alpha_2 \restriction \ra{E}(T) = \alpha$,  and $\alpha_2(\ell',t') = (A',B')$. Note that, since $(A,B)$ and $(A',B')$ were addable at $t$ and $t'$, $(T_1,\alpha_1)$ and $(T_2,\alpha_2)$ are tame $S_k$-trees over $\cc{F} \cup \{ (B,A) \} $ and $\cc{F} \cup \{(B',A')\}$ respectively. Let us denote by $\ra{e_1}$ the edge $(\ell,t) \in \ra{E}(T_1)$ and $\la{f_2}$ the edge $(\ell',t') \in \ra{E}(T_2)$. For each vertex and edge in $T$ let us write $v_1$ and $v_2$ or $e_1$ and $e_2$ for the copy of $v$ or $e$ in $T_1$ and $T_2$ respectively, and similarly $\sigma^i_{v_i}$ for the stars $\alpha_i({\ra{F}\!}_{v_i})$. Note that, for every $v \neq t,t'$ we have $\sigma^i_{v_i} = \sigma_v$, however $\sigma^1_{t_1} = \sigma_t \cup \{(A,B)\}$ and $\sigma^2_{t'_2} = \sigma_{t'} \cup \{(A',B')\}$.

Over all separations $(X,Y)$ such that $(A,B) \leq (X,Y) \leq (B',A')$ and $|X,Y|_r = \lambda\big( (A,B) , (B',A') \big) =: \ell$ we pick $(X,Y)$ which is nested with maximally many separations in $\alpha(\ra{E}(T))$. Note that $(X,Y)$ is linked to $(A,B)$ and $(Y,X)$ is linked to $(A',B')$.

Let $(T_1,\beta_1)$ be the $S_k$-tree given by shifting $(T_1,\alpha_1)$ onto $(X,Y)$ with respect to ${\ra{e}\!}_1$ and let $(T_2,\beta_2)$ be the $S_k$-tree given by shifting $(T_2,\alpha_2)$ onto $(Y,X)$ with respect to ${\la{f}\!}_2$. Note that, since $\cc{F}$ is fixed under shifting, by Lemmas \ref{l:shift1} and \ref{l:closedundershifting}, $(T_1,\alpha_1)$ and $(T_2,\beta_2)$ are $S_k$-trees over $\cc{F} \cup \{ (X,Y) \}$ and $\cc{F} \cup \{ (Y,X) \}$ respectively and that, since $\ell$ and $\ell'$ were leaves, these are indeed $S_k$ trees with underlying trees $T_1$ and $T_2$ respectively. We let $(\hat{T},\hat{\alpha})$ be the following $S_k$-tree:

$\hat{T}$ is the tree formed by taking the disjoint union of $T_1$ and $T_2$, and identifying the edge ${\ra{e}\!}_1$ with the edge ${\ra{f}\!}_2$. $\hat{\alpha}$ is then formed by taking the disjoint union of $\beta_1$ and $\beta_2$ on the domain $\ra{E}(\hat{T})$ 
where we note that, since $\beta_1({\ra{e}\!}_1) = (X,Y) = \beta_2({\ra{f}\!}_2)$ this map is well defined. We note that, by Lemma \ref{l:closedundershifting}, $(\hat{T},\hat{\alpha})$ is an $S_k$-tree over $\cc{F}$. For each vertex in $v \in T$ there are now two copies $v_1$ and $v_2$ in $\hat{T}$. We will write $\hat{\sigma}_{v_i}$ for the stars $\hat{\alpha}({\ra{F}\!}_{v_i})$. 

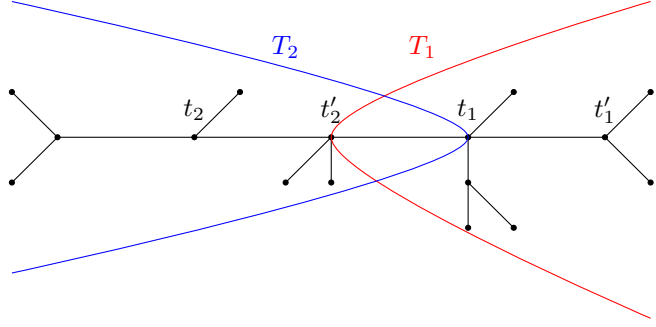
\begin{figure}[ht!]
\begin{center}
\begin{tikzpicture}[scale=0.6]

\node[draw, circle,scale=.2, fill] at (0,0) {};
\node[draw, circle,scale=.2, fill] at (-3,0) {};
\node[draw, circle,scale=.2, fill] at (3,0) {};
\node[draw, circle,scale=.2, fill] at (6,0) {};
\node[draw, circle,scale=.2, fill] at (-6,0) {};
\node[draw, circle,scale=.2, fill] at (-7,1) {};
\node[draw, circle,scale=.2, fill] at (-2,1) {};
\node[draw, circle,scale=.2, fill] at (4,1) {};
\node[draw, circle,scale=.2, fill] at (7,1) {};
\node[draw, circle,scale=.2, fill] at (-7,-1) {};
\node[draw, circle,scale=.2, fill] at (-1,-1) {};
\node[draw, circle,scale=.2, fill] at (0,-1) {};
\node[draw, circle,scale=.2, fill] at (3,-1) {};
\node[draw, circle,scale=.2, fill] at (7,-1) {};
\node[draw, circle,scale=.2, fill] at (3,-2) {};
\node[draw, circle,scale=.2, fill] at (4,-2) {};

\draw (-6,0)--(6,0) (-7,1)--(-6,0)--(-7,-1) (-2,1)--(-3,0) (0,-1)--(0,0)--(-1,-1) (4,1)--(3,0)--(3,-1)--(3,-2) (3,-1)--(4,-2) (7,1)--(6,0)--(7,-1);

\node at (0,0.6) {$t'_2$};
\node at (-3,0.6) {$t_2$};
\node at (3,0.6) {$t_1$};
\node at (6,0.6) {$t'_1$};

\draw [red] plot [smooth] coordinates {(7,3) (0,0) (7,-4)};
\draw [blue] plot [smooth] coordinates {(-7,3) (3,0) (-7,-3)};

\node[red] at (2,2) {$T_1$};
\node[blue] at (-1,2) {$T_2$};

\end{tikzpicture}
\end{center}
\caption{The tree $\hat{T}$.}
\end{figure}

Note that, since $(X,Y)$ is linked to $(A,B)$ and $(Y,X)$ is linked to $(A',B')$ it follows from Lemma \ref{l:shift2} that for every $s_i \in V(T_i)$
\begin{equation}\label{e:smaller}
\langle \hat{\sigma}_{s_i} \rangle_r \leq \langle \sigma^i_{s_i} \rangle_r \leq \langle \sigma_{s} \rangle_r,
\end{equation}
where the final inequality holds trivially for all $s \neq t,t'$, and for $s=t,t'$ since adding a separation to a star only decreases the size.

\begin{claim}\label{c:equal}
If $\langle \sigma_{s} \rangle_r > \ell$ and
\[
\langle \hat{\sigma}_{s_i} \rangle_r = \langle \sigma_{s} \rangle_r
\]
then
\[
\langle \hat{\sigma}_{s_{i-2}} \rangle_r \leq \ell.
\]
\end{claim}
\begin{proof}
Indeed, let us assume without loss of generality that 
\[
\langle \hat{\sigma}_{s_1} \rangle_r = \langle \sigma^1_{s_1} \rangle_r,
\]
and suppose first that $s \neq t,t'$, and so $\sigma^i_{s_i} = \sigma_s$. Let us write
\[
\sigma_s = \{ (A_0,B_0), \ldots, (A_n,B_n) \}.
\]
For each $s \in T$ there is a unique edge $\ra{g}$ such that ${\ra{e}\!}_1 \leq {\ra{g}\!}_1$, and similarly a unique edge $\ra{h}$ such that ${\la{f}\!}_2 \leq {\ra{h}\!}_2$. Let us suppose without loss of generality that $\alpha(\ra{g}) = (A_0,B_0)$ and $\alpha(\ra{h}) = (A_j,B_j)$, where perhaps $j=0$. Then
\[
\hat{\sigma}_{s_1} = \{ (A_0 \cup X,B_0 \cap Y), \ldots, (A_n \cap Y,B_n \cup X) \}
\]
and 
\[
\hat{\sigma}_{s_2} =  \{ (A_j \cup Y,B_j \cap X), \ldots, (A_n \cap X,B_n \cup Y) \}
\]

Now, since $\langle \hat{\sigma}_{s_1} \rangle_r \leq \langle \sigma_{s} \rangle_r$, it follows that equality holds throughout the proof of Lemma \ref{l:shift2}, and so in particular
\[
|B^* \cap X, A^* \cup Y|_r = |X,Y|_r,
\]
where  $B^* =  \bigcap_{i\neq 0} B_i$ and $A^* = \bigcup_{i \neq 0} A_i$.

However, $(A,B) \leq (B^* \cap X, A^* \cup Y) \leq (B',A')$ and so, by our choice of $(X,Y)$, $(X,Y)$ is nested with at least as many separations in $\alpha(\ra{E}(T))$ as $(B^* \cap X, A^* \cup Y)$. However, since $(B^*,A^*)$ is nested with $\alpha(\ra{E}(T))$ and $(B^* \cap X, A^* \cup Y) \leq (B_i,A_i)$ for each $i \neq 0$, it follows by Lemma  \ref{l:corner} that $(X,Y)$ was already nested with $(B_i,A_i)$ for each $i \neq 0$.

We note that if $(B_i,A_i) \leq (Y,X)$ for any $i \neq 0$, then $(A,B) \leq (B_i,A_i) \leq (Y,X)$ and $(A,B) \leq (X,Y)$. Hence $A \subset X \cap Y$ and so $r(A) \leq r(X \cap Y) \leq |X,Y|_r$, contradicting our assumptions on $A$. Similarly if $(B_i,A_i) \leq (X,Y)$ for $i \neq j$.

Therefore, for each $i \neq j,0$, either $(X,Y) \leq (B_i,A_i)$ or $(Y,X) \leq (B_i,A_i)$. Note that, in the first case  $r(B_i \cup Y) = r(V)$ and $r(B_i \cup X) = r(X)$, and vice versa. Hence,
\begin{align*}
\langle \hat{\sigma}_{s_1} \rangle_r + \langle \hat{\sigma}_{s_2} \rangle_r &= r(B_0 \cap Y) + r(B_j \cap X) + \sum_{i \neq 0} r(B_i \cup X) + \sum_{i \neq j} r(B_i \cup Y) - 2n.r(V) \\
&=  r(B_0 \cap Y) + r(B_j \cap X) + r(B_0 \cup Y) + r(B_j \cup X) + \sum_{i \neq 0,j} r(B_i)  - (n+1).r(V) \\
&= \langle \sigma_{s} \rangle_r + r(B_0 \cap Y) + r(B_j \cap X) + r(B_0 \cup Y) - r(B_0) - r(B_j) - r(V)\\
&\leq \langle \sigma_{s} \rangle_r + r(Y) + r(X) - r(V)\\
&\leq \langle \sigma_{s} \rangle_r + |X,Y|_r.
\end{align*}

From which is follows that, if $\langle \hat{\sigma}_{s_1} \rangle_r = \langle \sigma_{s} \rangle_r$, $ \langle \hat{\sigma}_{s_2} \rangle_r \leq |X,Y|_r$. If $s=t$ or $t'$, then a similar calculation holds.
\end{proof}

\begin{claim}\label{c:size}
\[
\langle \hat{\sigma}_{t_1} \rangle_r  < \langle \sigma_t \rangle_r.
\]
and
\[
\langle \hat{\sigma}_{t'_2} \rangle_r< \langle \sigma_{t'} \rangle_r.
\]
\end{claim}
\begin{proof}
Let us write
\[
\sigma_t = \{ (A_1,B_1), \ldots, (A_n,B_n) \},
\]
and so
\[
\langle \hat{\sigma}_{t_1} \rangle_r = r(Y) + \sum_{i=1}^ n r(B_i \cup X) - n.r(V).
\]
Suppose for contradiction that $\langle \hat{\sigma_{t_1}} \rangle_r = \langle \sigma_t \rangle_r$. As before it follows that we can split $(A_i,B_i)$ into sets $I$ and $I'$ such that $(X,Y) \leq (B_j,A_j)$ for $j \in I $ and $(Y,X) \leq (B_j,A_j)$ for $j \in I'$. It follows that
\[
\langle \hat{\sigma_{t_1}} \rangle_r = r(Y) + \sum_{j \in I} r(B_j) - |I|.r(V).
\]
However, since $\sigma_t$ is a star it follows by Claim \ref{c:starint} that
\[
\sum_{j \in I'} r(B_j) \geq r(\bigcap_{j \in I'} B_j) + (|I'|-1).r(V).
\]
Now, $A \subset \bigcap_{i \in I'} B_i$ and also by definition of $I'$, $Y \subset \bigcap_{j \in I'} B_j$ and hence $r(Y \cup A) \leq r(\bigcap_{j \in I'} B_j)$. However, by submodularity
\[
r(X) + r(A \cup Y) \geq r(A) + r(V)
\]
and by assumption $r(A) > |X,Y|_r = r(X) + r(Y) - r(V)$, and so
\[
r(A \cup Y) > r(Y),
\]
from which we conclude that
\begin{align*}
r(Y) &< r(Y \cup A) \\
&\leq r(\bigcap_{j \in I'} B_j) \\
&\leq \sum_{j \in I'} r(B_j) - (|I'|-1).r(V).
\end{align*}
Therefore,
\begin{align*}
\langle \hat{\sigma}_{t_1} \rangle_r &= r(Y) + \sum_{j \in I} r(B_j) - |I|.r(V) \\
&< \sum_{i=1}^n r(B_i) - (n-1).r(V) = \langle \sigma_t \rangle_r.
\end{align*}
A similar argument shows that $\langle \hat{\sigma}_{t'_2} \rangle_r < \langle \sigma_{t'} \rangle_r$.
\end{proof}

\begin{claim}\label{c:alleq}
For every $p > \ell$ and every $s \in V(T)$ with $\langle \sigma_s \rangle_r = p$ exactly one of $\hat{\sigma}_{s_1} , \hat{\sigma}_{s_2}$ has size $p$, and the other has size $\leq \ell$ and further for each component $C$ of $T^p$, if $\langle \hat{\sigma}_{s_i} \rangle_r =\langle \sigma_s \rangle_r$ for some $s \in C$, then $\langle \hat{\sigma}_{s'_i} \rangle_r =\langle \sigma_{s'} \rangle_r$ for every $s' \in C$.
\end{claim}
\begin{proof}
Let $\overline{p} = \max \{ \langle \sigma_s \rangle_r \, : \, s \in V(T)\}$. We will prove the statement by induction starting with $p=\overline{p}$.

By $\prec$-minimality of $T$ it follows that $v(\hat{T}^{\overline{p}}) \geq v(T^{\overline{p}})$. However, since $\overline{p} \geq \langle \sigma_t \rangle_r \geq r(A) >\ell$, for every $s \in V(T)$ with $\langle \sigma_s \rangle_r = \overline{p}$, by Claim \ref{c:equal} if one of $\hat{\sigma}_{s_1} , \hat{\sigma}_{s_2} $ has the same size as $\sigma_s$, the other has size $\leq \ell$.

Therefore, it follows that $v(\hat{T}^{\overline{p}}) \leq v(T^{\overline{p}})$, and so $v(\hat{T}^{\overline{p}}) = v(T^{\overline{p}})$, and for every $s \in V(T)$ with $\langle \sigma_s \rangle_r = \overline{p}$ exactly one of $ \hat{\sigma}_{s_1} , \hat{\sigma}_{s_2} $ has the same size as $\sigma_s$, and the other has size $\leq \ell$. Recall that $\hat{T}$ is formed by joining two copies of $T$ by an edge between $t_1$ and $t'_2$, and that by Claim \ref{c:size} $\hat{\sigma}_{t_1}$ and $\hat{\sigma}_{t'_2}$ both have size $<\overline{p}$. It follows that $c(\hat{T}^{\overline{p}}) \geq c(T^{\overline{p}})$, and so again by $\prec$-minimality of $T$, $c(\hat{T}^{\overline{p}}) = c(T^{\overline{p}})$. Furthermore, for each component $C$ of $T^{\overline{p}}$, if $\langle \hat{\sigma}_{s_i} \rangle_r =\langle \sigma_s \rangle_r$ for some $s \in C$, then $\langle \hat{\sigma}_{s'_i} \rangle_r =\langle \sigma_{s'} \rangle_r$ for every $s' \in C$.

Let $\ell < p < \overline{p}$ and suppose the claim holds for all $p'>p$, . We split into three cases, first let us suppose $p \geq \max \{ \langle \sigma_t \rangle_r, \langle \sigma_{t'} \rangle_r \}$. In this case, as before, by $\prec$-minimality of $T$ $v(\hat{T}^p) \geq v(T^p)$. However, by the induction claim, for every $s \in V(T)$ with $\langle \sigma_s \rangle_r > p$ exactly one of $\hat{\sigma}_{s_1} , \hat{\sigma}_{s_2}$ has size $p$, and the other has size $\leq \ell$. Furthermore, by Claim \ref{c:equal}, for every $s \in V(T)$ with $\langle \sigma_s \rangle_r = p$ if one of $\hat{\sigma}_{s_1} , \hat{\sigma}_{s_2} $ has the same size as $\sigma_s$, the other has size $\leq \ell$.

Therefore it follows that $v(\hat{T}^p) \leq v(T^p)$, and so $v(\hat{T}^p) = v(T^p)$, and for every $s \in V(T)$ with $\langle \sigma_s \rangle_r = p$ exactly one of $ \hat{\sigma}_{s_1} , \hat{\sigma}_{s_2} $ has the same size as $\sigma_s$, and the other has size $\leq \ell$. As before, recall that $\hat{T}$ is formed by joining two copies of $T$ by an edge between $t_1$ and $t'_2$, and that by Claim \ref{c:size} $\hat{\sigma}_{t_1}$ and $\hat{\sigma}_{t'_2}$ both have size $< \max \{ \langle \sigma_t \rangle_r, \langle \sigma_{t'} \rangle_r \} \leq p$. It follows that $c(\hat{T}^p) \geq c(T^p)$, and so again by $\prec$-minimality of $T$, $c(\hat{T}^p) = c(T^p)$. Further, for each component $C$ of $T^p$, if $\langle \hat{\sigma}_{s_i} \rangle_r =\langle \sigma_s \rangle_r$ for some $s \in C$, then $\langle \hat{\sigma}_{s'_i} \rangle_r =\langle \sigma_{s'} \rangle_r$ for every $s' \in C$.

Suppose then that $p=\max \{ \langle \sigma_t \rangle_r, \langle \sigma_{t'} \rangle_r \}$, say without loss of generality $p= \langle \sigma_t \rangle_r$. As before we conclude that for every $s \in V(T)$ with $\langle \sigma_s \rangle_r = p$ exactly one of $ \hat{\sigma}_{s_1} , \hat{\sigma}_{s_2} $ has the same size as $\sigma_s$, and the other has size $\leq \ell$. In this case, since by Claim \ref{c:size} $\langle \hat{\sigma}_{t_1} \rangle_r < \langle \sigma_t \rangle_r = p$, it follows that $\langle \hat{\sigma}_{t_1} \rangle_r \leq \ell$. This allows us to conclude that the copies of each component of $T^p$ in $\hat{T}^p$ are separated by the vertex $t_1$, and thus $c(\hat{T}^p) \geq c(T^p)$. So as before we can conclude that for each component $C$ of $T^p$, if $\langle \hat{\sigma}_{s_i} \rangle_r =\langle \sigma_s \rangle_r$ for some $s \in C$, then $\langle \hat{\sigma}_{s'_i} \rangle_r =\langle \sigma_{s'} \rangle_r$ for every $s' \in C$.

Finally, when $\ell < p < \max \{ \langle \sigma_t \rangle_r, \langle \sigma_{t'} \rangle_r \}$ the same argument will hold, since one of $\langle \hat{\sigma}_{t_1} \rangle_r$ or $\langle \hat{\sigma}_{t_2} \rangle_r$ has size $ \leq \ell$.
\end{proof}

Suppose that $t=t'$. Since $(A,B)$ and $(A',B')$ are addable at $\sigma_t$ it follows by Lemma \ref{l:sepstar} that $\langle \sigma_t \rangle_r \geq r(A) \geq \min \{r(A),r(A')\} > \ell$, and so by Claim \ref{c:alleq}
\[
\langle \sigma_{t} \rangle_r = \max \{ \langle \hat{\sigma}_{t_1} \rangle_r, \langle \hat{\sigma}_{t_2} \rangle_r  \},
\]
contradicting Claim \ref{c:size}.

Suppose that $t \neq t'$. Then, since 
\[
|\alpha(\ra{g})| > \ell \text{ for all } \ra{g} \in t T t,
\]
it follows by Lemma \ref{l:sepstar} that $\langle \sigma_s \rangle_r >  \ell$ for all $s \in tTt'$, and so $t$ and $t'$ live in the same component of $T^{(\ell +1)}$. However, $\langle \hat{\sigma}_{t_1} \rangle_r < \ell$ and $\langle \hat{\sigma}_{t'_2} \rangle_r < \ell$, contradicting Claim \ref{c:alleq}.

\end{proof}

\begin{remark}
Unlike in the case of tree-width for graphs it is not true in general that the existence of a $\cc{F}$-lean $S_k$-tree over $\cc{F}$ implies the existence of a linked $S_k$-tree over $\cc{F}$. It seems likely that similar methods should prove the existence of an $S_k$-tree which is both linked and $\cc{F}$-lean. However, since Theorem \ref{t:link} could be stated in slightly more generality, and to avoid lengthening an already quite technical proof, we have stated the results separately. Explicitly, one should consider a minimal element of the following order on the set of $S_k$-trees over $\cc{F}$: we let $T<S$ if there is some $p \in \mathbb{N}$ such that:
\begin{itemize}
\item $e(T_p) < e(S_p)$; or
\item $e(T_p) = e(S_p)$ and $c(T_p) > c(S_p)$; or
\item $e(T_p) = e(S_p), c(T_p) > c(S_p)$ and $v(T^p) < v(S^p)$; or
\item $e(T_p) = e(S_p), c(T_p) > c(S_p), v(T^p) < v(S^p)$ and $c(T^p) > c(S^p)$,
\end{itemize}
and for all $p' > p$, all four quantities are equal. 
\end{remark}

\section{Applications}\label{s:app}
\subsection{Graphs}
Throughout this subsection $\ra{S}$ is the universe of separations of some graph. Given a star 
\[
\sigma = \{ (A_0,B_0), (A_1,B_1), \ldots , (A_n,B_n) \}
\]
we define int$(\sigma) = \bigcap_{i=0}^n B_i$. Given an $S_k$ tree $(T,\alpha)$ we can construct an associated tree-decompositon $(T,\cc{V})_{\alpha}$ by letting
\[
V_t = \text{int}(\sigma_t) \text{ for each } t \in T.
\]
\begin{lemma}\label{l:build}
If $(T,\alpha)$ is a tame $\ra{S}$-tree then $(T,\cc{V})_{\alpha}$ is a tree-decompositon.  Furthermore, if $\alpha(t,t') = (A,B)$ then $V_t \cap V_{t'}= A \cap B$.
\end{lemma}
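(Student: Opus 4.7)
The plan is to verify the three axioms of a tree-decomposition in turn, and then read off the \emph{furthermore} equality directly from the star property at the endpoints of the edge $(t,t')$. Throughout I will use only that $(T,\alpha)$ is tame, which the excerpt tells us is equivalent to $\alpha$ preserving the natural ordering on $\ra{E}(T)$, together with the star condition on each $\sigma_t = \alpha(\ra{F}_t)$ and the two defining properties of a graph separation, namely $A \cup B = V(G)$ and the absence of edges between $A\setminus B$ and $B\setminus A$.

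For the vertex-coverage and edge-coverage axioms I would use a standard sink argument. Given $v \in V(G)$ (resp.\ $xy \in E(G)$), for each edge $e$ of $T$ I choose the orientation $\ra{e}$ so that $v$ (resp.\ both $x$ and $y$) lies in the second coordinate of $\alpha(\ra{e}) = (A,B)$: in the vertex case this uses only $A\cup B = V(G)$; in the edge case it uses that $xy$ cannot have endpoints in $A\setminus B$ and $B\setminus A$ respectively. Since $T$ is a finite tree, any such orientation of its edges has a sink, i.e.\ a vertex $t$ at which every incident edge lies in $\ra{F}_t$; at this $t$ every element of $\sigma_t$ has $v$ (resp.\ $\{x,y\}$) in its $B$-component, so $v \in \bigcap_i B_i = V_t$ (resp.\ $\{x,y\} \subseteq V_t$).

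For the path axiom, fix the path $t_1 = s_0, s_1, \ldots, s_k = t_3$ and orient it forward: $\ra{e}_i = (s_i, s_{i+1})$ with $\alpha(\ra{e}_i) = (A_i,B_i)$. By tameness $(A_0,B_0) \leq (A_1,B_1) \leq \cdots \leq (A_{k-1},B_{k-1})$, so the $A_i$ are nested up and the $B_i$ are nested down. Now $\la{e}_0 \in \ra{F}_{s_0}$ with $\alpha(\la{e}_0) = (B_0,A_0)$ forces $v \in A_0$ for any $v \in V_{s_0}$, and symmetrically $\ra{e}_{k-1} \in \ra{F}_{s_k}$ forces $v \in B_{k-1}$ for $v \in V_{s_k}$. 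At an intermediate $s_i$ the two on-path elements of $\ra{F}_{s_i}$ then give $v \in A_i \cap B_{i-1}$ by the monotonicity above; for any off-path $\ra{g} \in \ra{F}_{s_i}$ with $\alpha(\ra{g}) = (X,Y)$, the star condition applied to $\alpha(\ra{g})$ and $\alpha(\ra{e}_{i-1}) \in \sigma_{s_i}$ gives $(X,Y) \leq (B_{i-1},A_{i-1})$, so $Y \supseteq A_{i-1} \ni v$. Hence $v \in V_{s_i}$; taking $s_i = t_2$ closes the axiom.

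The furthermore statement then drops out: if $\alpha(t,t') = (A,B)$, then $\la{e} \in \ra{F}_t$ contributes $A$ to $V_t$ and $\ra{e} \in \ra{F}_{t'}$ contributes $B$ to $V_{t'}$, giving $V_t \cap V_{t'} \subseteq A \cap B$; for the reverse inclusion I would take $v \in A \cap B$ and use the star condition at $t$ (respectively $t'$) to show that every other element of $\sigma_t$ (resp.\ $\sigma_{t'}$) has $v$ in its $B$-component. The only real obstacle I anticipate is bookkeeping: keeping straight which orientation of each edge lies in each $\ra{F}_t$ and which way the star inequality points. No genuinely new idea beyond tameness and the star property is needed.
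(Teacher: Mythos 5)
Your proposal is correct and follows essentially the same route as the paper's proof: a sink argument over an orientation of $T$ for the two coverage axioms, and order-preservation of $\alpha$ (tameness) combined with the star property at the middle vertex for the path axiom and for both inclusions of $V_t \cap V_{t'} = A \cap B$. The only cosmetic difference is that you propagate membership vertex-by-vertex along the path with indices, where the paper argues directly at $t_2$; the substance is identical.
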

\begin{proof}
Given an edge $e \in E(G)$ we can define an orientation on $E(T)$ as follows. For each edge $f \in E(T)$ let us pick one of the orientations $\ra{f}$ such that $\alpha(\ra{f}) = (A,B)$ with $e \in B$. Note that, since $\alpha(\ra{f})=(A,B)$ is a separation, either $e \in A$ or $e \in B$. Any orientation has a sink $t$, and it is a simple check that $e \in V_t$. This proves that $(T,\cc{V})$ satisfies the first two properties of a tree-decomposition.

Finally suppose that $t_2 \in t_1Tt_3$ and $x  \in V_{t_1} \cap V_{t_3}$. There is a unique edge $\ra{e} \in \ra{F}_{t_2}$ such that $t_1 \in T(\la{e})$ and similarly a unique edge $\ra{f} \in \ra{F}_{t_2}$ such that $t_3 \in T(\la{f})$. Let $\alpha(\ra{e}) = (A,B)$ and $\alpha(\ra{f}) = (C,D)$ and let us write
\[
\sigma_{t_2} = \{ (A,B), (C,D), (A_1,B_1), \ldots, (A_n,B_n) \}.
\]
Since $x \in V_{t_1}$ and $\alpha$ preserves the tree-ordering, it follows that $x \in A$ and hence $x \in D \cap \bigcap_{i=1}^n B_i$. Similarly, since $x \in V_{t_3}$, $x \in C$ and hence $x \in B \cap \bigcap_{i=1}^n B_i$ and so 
\[
x \in B \cap D \cap \bigcap_{i=1}^n B_i = \text{int}(\sigma_{t_2}) = V_{t_2}.
\]

Finally, suppose that $\alpha(t,t') = (A,B)$. Since $V_t = \text{int}(\sigma_t) \subseteq A$ and $V_{t'} = \text{int}(\sigma_{t'}) \subseteq B$ it follows that $V_t \cap V_{t'} \subseteq A \cap B$. Conversely it is a simple check that, since $\sigma_t$ and $\sigma_{t'}$ are stars, $A \cap B \subset \text{int}(\sigma_t)$ and $A \cap B \subset \text{int}(\sigma_{t'})$. It follows that $V_t \cap V_{t'}= A \cap B$
\end{proof}
We note that, conversely, given a tree-decomposition $(T,\cc{V})$ we can build a tame $S_k$-tree $(T,\alpha)_{\cc{V}}$ by letting 
\[
\alpha(\ra{e}) = \alpha(t,t') = \left( \bigcup_{s \, : \, t \in sTt'} V_s, \bigcup_{s \, : \, t'\in sTt} V_s\right),
\]
and in this way the two notions are equivalent.

In this way, by applying Theorems \ref{t:link} and \ref{t:lean} to appropriate families $\cc{F}$ of stars we can prove a number of results about tree-decompositions of graphs, some known and some new. We will adapt Definition \ref{d:link} slightly to the following

\begin{defn}\label{d:link2}
A tree decomposition $(T,\cc{V})$ is called \emph{linked} if for all $k \in \bb{N}$ and every $t,t' \in T$, either $G$ contains $k$ disjoint $V_{t}$-$V_{t'}$ paths or there is an edge $(s,s') \in tTt'$ such that $|V_s \cap V_{s'}|<k$.
\end{defn}

As in the introduction, it is easy to see that if a linked tree-decomposition exists in the sense of Definition \ref{d:link2}, then by subdividing each edge and adding as a bag the separating set $V_t \cap V_{t'}$ we obtain a linked tree-decomposition in the sense of Definition \ref{d:link}.

\begin{lemma}\label{l:linktree}
Let $\ra{S}$ be the universe of graph separations for some graph $G$. If $(T,\alpha)$ is a linked tame $S_k$-tree then $(T,\cc{V})_{\alpha}$ is linked.
\end{lemma}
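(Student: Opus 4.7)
The plan is to fix $t, t' \in V(T)$ and $k \in \mathbb{N}$ and show that either $G$ contains $k$ disjoint $V_t$-$V_{t'}$ paths or some edge on $tTt'$ has adhesion less than $k$. We may assume $t \ne t'$. Let $t = t_0, \ldots, t_m = t'$ be the path in $T$ with oriented edges $\ra{e}_i := (t_{i-1}, t_i)$; write $(A_i, B_i) := \alpha(\ra{e}_i)$ and $V_{e_i} := V_{t_{i-1}} \cap V_{t_i}$. Then $\ra{e}_1 \le \cdots \le \ra{e}_m$ in $\ra{E}(T)$, and by Lemma~\ref{l:build} together with the identity $|X,Y| = |X \cap Y|$ for graph separations we have $|\alpha(\ra{e}_i)| = |V_{e_i}|$. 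The edges $\ra{g}$ with $\ra{e}_1 \le \ra{g} \le \ra{e}_m$ are precisely $\ra{e}_1, \ldots, \ra{e}_m$, so the linked hypothesis gives
\[
\mu := \min_i |V_{e_i}| = \lam{\alpha(\ra{e}_1), \alpha(\ra{e}_m)}.
\]

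Let $p$ be the maximum number of internally disjoint $V_t$-$V_{t'}$ paths in $G$, which by Menger's theorem equals the minimum size of a $V_t$-$V_{t'}$ separator. I will show $\mu = p$, from which the lemma follows at once: if $p \ge k$ then the $k$ disjoint paths exist, and if $p < k$ then some $V_{e_i}$ has size $\mu = p < k$, giving the required edge. The inequality $\mu \ge p$ is straightforward, since each $(X,Y) \in [\alpha(\ra{e}_1), \alpha(\ra{e}_m)]$ satisfies $V_t \subseteq A_1 \subseteq X$ and $V_{t'} \subseteq B_m \subseteq Y$, so $X \cap Y$ is a $V_t$-$V_{t'}$ separator of order $|X \cap Y| \ge p$. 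The reverse inequality $\mu \le p$ requires producing a separation in the interval of order at most $p$, and this is the delicate part.

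To this end I work inside the middle subgraph $H := G[A_m \cap B_1]$. Taking any family of $p$ internally disjoint $V_t$-$V_{t'}$ paths in $G$ and passing to the subpath of each between its first visit to $V_{e_1}$ and the first subsequent visit to $V_{e_m}$ produces $p$ internally disjoint $V_{e_1}$-$V_{e_m}$ paths inside $H$, so Menger applied in $H$ yields a $V_{e_1}$-$V_{e_m}$ separator $S \subseteq A_m \cap B_1$ with $|S| = p$. I then set
\[
A' := V_t \cup S \cup \{v : v \text{ is reachable from } V_t \text{ in } G-S\} \cup \{v : v \text{ lies in a component of } G-S \text{ contained in } A_1\}
\]
and $B' := (V \setminus A') \cup S$, giving a graph separation $(A',B')$ with $A' \cap B' = S$ and hence of order $p$. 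The inclusion $(A_1, B_1) \le (A', B')$ uses that every $v \in A_1$ has all its $G$-neighbours in $A_1$, so the component of $v$ in $G - S$ lies inside $A_1$; the tree-decomposition property implies $V_{t'} \cap A_1 \subseteq V_t$, which forces this component either to meet $V_t$ or to be contained entirely in $A_1$, and in either case $v \in A'$. The inclusion $(A', B') \le (A_m, B_m)$ reduces to showing that no vertex reachable from $V_t$ escapes $A_m$: otherwise a witnessing path, taking its last entry into $V_{e_1}$ followed by the subsequent first visit to $V_{e_m}$, would yield a $V_{e_1}$-$V_{e_m}$ path in $H - S$ contradicting the choice of $S$. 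The main obstacle will be this lifting step; naive submodularity or corner-style arguments produce bounds polluted by additive terms such as $|\alpha(\ra{e}_1)| + |\alpha(\ra{e}_m)|$ and cannot force $\mu \le p$ on their own, so the Menger construction inside the middle strip $A_m \cap B_1$ is essential.
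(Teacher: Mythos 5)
Your route is essentially the paper's: combine linkedness of $(T,\alpha)$ with Lemma \ref{l:build}, and then bridge Menger's theorem for $V_t$--$V_{t'}$ paths to the order-theoretic quantity $\lam{\alpha(\ra{e}_1),\alpha(\ra{e}_m)}$. The paper compresses your ``delicate part'' into the single phrase ``it follows by Menger's theorem'' (applied to the two edges at $t$ and $t'$ pointing towards each other), whereas you build the low-order separation in the interval explicitly. That expansion is welcome, but two of its steps are not right as written. First, your justification that the separator $S$ in $H=G[A_m\cap B_1]$ has size $p$ argues in the wrong direction: exhibiting $p$ disjoint $V_{e_1}$--$V_{e_m}$ paths in $H$ only bounds the minimum separator from \emph{below}, and moreover the truncation you describe (first visit to $V_{e_1}$, then first subsequent visit to $V_{e_m}$) need not stay in $H$, since after meeting $V_{e_1}$ a path may dip back into $A_1\setminus B_1$. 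What the proof actually needs is the opposite, and much easier, inequality: every $V_{e_1}$--$V_{e_m}$ path in $H$ is already a $V_t$--$V_{t'}$ path in $G$ (because $V_{e_1}=V_t\cap V_{t_1}\subseteq V_t$ and $V_{e_m}\subseteq V_{t'}$), so $H$ contains at most $p$ disjoint such paths and Menger applied in $H$ gives $|S|\le p$, which suffices.

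Second, your verification of $(A',B')\le(A_m,B_m)$ only addresses $A'\subseteq A_m$. Since the two sides of a graph separation overlap, you must separately check $B_m\subseteq B'$, i.e.\ that no vertex of $A'\setminus S$ lies in $V_{e_m}=A_m\cap B_m$; otherwise $(A',B')$ is not a valid candidate in the interval and the bound on $\lambda$ does not follow. This is true but needs an argument: a vertex of $V_t$, or one reachable from $V_t$ in $G-S$, lying in $V_{e_m}\setminus S$ would, by the same last-entry-into-$V_{e_1}$/first-subsequent-visit-to-$V_{e_m}$ truncation, yield a $V_{e_1}$--$V_{e_m}$ path in $H-S$ (possibly a single vertex, using that $V_{e_1}\cap V_{e_m}\subseteq S$ because any common vertex is itself such a path), contradicting the choice of $S$. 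Relatedly, the assertion that ``every $v\in A_1$ has all its $G$-neighbours in $A_1$'' fails for $v\in A_1\cap B_1$; the correct dichotomy is that a component of $G-S$ meeting $A_1\setminus B_1$ either stays inside $A_1$ or meets $A_1\cap B_1\subseteq V_t$, and the appeal to $V_{t'}\cap A_1\subseteq V_t$ is not what is doing the work there. With these repairs your construction goes through and recovers the paper's short proof.
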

\begin{proof}
Suppose that $(T,\alpha)$ is a linked tame $S_k$-tree. Given $r \in \mathbb{N}$ and $t$ and $t' \in T$ such that $G$ does not contain $r$ disjoint $V_{t}$-$V_{t'}$ paths, we wish to show that there is an edge $(s,s') \in tTt'$ such that $|V_s \cap V_{s'}|<r$. Let $\ra{e}$ be the unique edge adjacent to $t$ such that $t' \in T(\ra{e})$ and similarly let $\la{f}$ be the unique edge adjacent to $t'$ such that $t \in T(\la{f})$. Note that $\ra{e} \leq \ra{f}$.

Let us write $\alpha(\ra{e}) = (A,B)$ and $\alpha(\ra{f}) = (C,D)$. Since $A \cap B \subset V_{t}$ and $C \cap D \subset V_{t'}$, and $G$ does not contain $r$ disjoint $V_{t}-V_{t'}$ paths, it follows by Menger's theorem that $\lambda\left( (A,B), (C,D)\right) < r$. Hence, since $(T,\alpha)$ is linked, there is some edge $\ra{e} \leq \ra{g} \leq \ra{f}$ such that $|\alpha(\ra{g})| =: |X,Y| < r$. Let us write $\ra{g} = (s,s')$. Note that, $(s,s') \in tTt'$ by construction, and by Lemma \ref{l:build} $|V_s \cap V_{s'}| = |X \cap Y| < r$, as claimed.
\end{proof}

For many families of stars, being $\cc{F}$-lean will not tell us much about the tree-decomposition. Indeed, if $\cc{F}$ only contains multi-sets of size $3$ or $1$ (as in the case of branch decompositions), then there is never an addable separation for any star in $\cc{F}$. However, for certain families of stars being $\cc{F}$-lean will imply leaness in the traditional sense. 

\begin{defn}
Let $\ra{S}$ be a separation system. A family of stars $\cc{F} \subset 2^{\ra{S}}$ is \emph{$S$-stable} if whenever $\sigma \in \cc{F}$ and $(A,B) \in \ra{S}$ is such that $\sigma \cup \{(A,B)\}$ is a star then $\sigma \cup \{(A,B)\} \in \cc{F}$.
\end{defn}

\begin{lemma}\label{l:stable}
Let $\ra{S}$ be a universe of separations with an order function $|.|_r$ for some non-decreasing submodular function $r: 2^V \rightarrow \mathbb{N}$, and let $p \in \mathbb{N}$. Then $\cc{F}_p$ is $S$-stable.
\end{lemma}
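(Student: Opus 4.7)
My plan is to unpack the definitions and verify the conclusion by a direct one-line computation. I would fix $\sigma \in \cc{F}_p$ and $(A,B) \in \ra{S}$ with $\sigma \cup \{(A,B)\}$ a star, write $\sigma = \{(A_0,B_0),\ldots,(A_n,B_n)\}$, so that by hypothesis
\[
\langle \sigma \rangle_r = \sum_{i=0}^n r(B_i) - n \cdot r(V) < p.
\]
Since the hypothesis of $S$-stability already gives that $\sigma \cup \{(A,B)\}$ is a star, no star-check is required; it remains only to verify that $\langle \sigma \cup \{(A,B)\}\rangle_r < p$, which then places $\sigma \cup \{(A,B)\}$ in $\cc{F}_p$.

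For the second and final step, I would expand the definition of $\langle \cdot \rangle_r$ applied to the enlarged star $\sigma \cup \{(A,B)\}$, which now has $n+2$ members, obtaining
\[
\langle \sigma \cup \{(A,B)\}\rangle_r \;=\; r(B) + \sum_{i=0}^n r(B_i) - (n+1)\cdot r(V) \;=\; \langle \sigma \rangle_r + r(B) - r(V).
\]
Since $B \subseteq V$ and $r$ is non-decreasing, $r(B) \leq r(V)$, so $\langle \sigma \cup \{(A,B)\}\rangle_r \leq \langle \sigma \rangle_r < p$, whence $\sigma \cup \{(A,B)\} \in \cc{F}_p$, as required. There is no genuine obstacle: only the monotonicity half of the standing hypothesis on $r$ is needed, and submodularity plays no role here. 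This is in sharp contrast with Lemma \ref{l:shift2}, where the stronger property that $\cc{F}_p$ is fixed under shifting requires the full submodular machinery (in particular the use of Claim \ref{c:starint}); $S$-stability, which asks only about enlargement by a single compatible separation and not about replacing separations by their shifts, is correspondingly easier.
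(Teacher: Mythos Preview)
Your proof is correct and is essentially identical to the paper's own proof: both compute $\langle \sigma \cup \{(A,B)\}\rangle_r = \langle \sigma \rangle_r + r(B) - r(V) \leq \langle \sigma \rangle_r < p$ using only the monotonicity of $r$. Your additional remark that submodularity plays no role here (in contrast to Lemma~\ref{l:shift2}) is accurate and a nice observation, though the paper does not make it explicit.
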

\begin{proof}
If $(A,B) \in \ra{S}$ is such that $\sigma \cup \{(A,B) \}$ is a star then,
\[
\langle  \sigma \cup \{(A,B)\} \rangle_r = \langle \sigma \rangle_r + r(B) - r(V) \leq \langle \sigma \rangle_r < p,
\]
and so $\sigma \cup \{(A,B)\}  \in \cc{F}_p$.
\end{proof}

\begin{lemma}\label{l:leantree}
Let $\ra{S}$ be the universe of graph separations for some graph $G$ and let $\cc{F} \subset \N^{\ra{S}}$ be an $S$-stable family of stars. If $(T,\alpha)$ is an $\cc{F}$-lean tame $S$-tree then $(T,\cc{V})_{\alpha}$ is lean.
\end{lemma}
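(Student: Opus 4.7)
The plan is to translate the lean property of the tree-decomposition $(T,\cc{V})_\alpha$ directly into the $\cc{F}$-lean property of $(T,\alpha)$ by encoding the prescribed vertex sets $Z_1 \subseteq V_t$ and $Z_2 \subseteq V_{t'}$ as ``trivial'' graph separations and then reading off the conclusion via Menger's theorem. Given $k \in \N$, $t,t' \in V(T)$, and $Z_1 \subseteq V_t$, $Z_2 \subseteq V_{t'}$ with $|Z_1|=|Z_2|=k$, my first step would be to set $(A,B) := (Z_1, V(G))$ and $(A',B') := (Z_2, V(G))$. Since $A \setminus B$ and $A' \setminus B'$ are both empty, these are valid oriented graph separations, and clearly $(A,B) \leq (B',A') = (V(G),Z_2)$.

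Next I would verify that $(A,B)$ is addable at $t$ (and symmetrically that $(A',B')$ is addable at $t'$). Writing $\sigma_t = \{(A_i,B_i)\}$, the hypothesis $Z_1 \subseteq V_t = \bigcap_i B_i$ immediately gives $(Z_1,V(G)) \leq (B_i,A_i)$ for every $i$, so that $\sigma_t \cup \{(Z_1,V(G))\}$ is a star; now the assumption that $\cc{F}$ is $S$-stable kicks in to guarantee this augmented star is still in $\cc{F}$. This step is where the $S$-stability hypothesis gets used, and it is really the only content beyond bookkeeping.

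Now the $\cc{F}$-leanness of $(T,\alpha)$ applies. Note that the order function here is $|X,Y|_r = |X \cap Y|$ and $r(Z_1) = r(Z_2) = k$, so $\min\{r(A), r(A')\} = k$. Moreover, separations $(X,Y)$ with $(Z_1,V(G)) \leq (X,Y) \leq (V(G),Z_2)$ are precisely graph separations with $Z_1 \subseteq X$ and $Z_2 \subseteq Y$, and by Menger's theorem
\[
\lam{(Z_1,V(G)),(V(G),Z_2)} = \min\{|X\cap Y| : Z_1 \subseteq X,\, Z_2 \subseteq Y\}
\]
equals the maximum number of disjoint $Z_1$-$Z_2$ paths in $G$. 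Thus the first alternative of $\cc{F}$-leanness ($\lam{\cdot,\cdot} \geq k$) yields the required $k$ disjoint paths, while the second alternative produces an edge $\ra{g} = (s,s') \in tTt'$ with $|\alpha(\ra{g})| = \lam{\cdot,\cdot}$; combining this with Lemma~\ref{l:build}, which identifies $|V_s \cap V_{s'}|$ with $|\alpha(\ra{g})|$, we obtain either $k$ disjoint $Z_1$-$Z_2$ paths or an edge of the desired form, exactly as the lean property demands.

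The main obstacle, such as it is, is really just the correct bookkeeping: choosing the ``trivial'' separations $(Z_1,V(G))$ and $(Z_2,V(G))$ rather than some more obvious candidate like $(Z_1, V(G) \setminus Z_1)$ (which need not be a separation), and recognising that in the graph universe $\lam{(Z_1,V(G)),(V(G),Z_2)}$ is exactly the Mengerian connectivity between $Z_1$ and $Z_2$. Once these two identifications are in place, the proof is a direct translation and does not require any further nontrivial ingredient.
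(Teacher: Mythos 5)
Your proposal is correct and follows essentially the same route as the paper's proof: encode $Z_1,Z_2$ as the separations $(Z_1,V)$ and $(Z_2,V)$, use $S$-stability to get addability at $t$ and $t'$, translate $\lam{(Z_1,V),(V,Z_2)}$ into the Mengerian $Z_1$--$Z_2$ connectivity, and read off the adhesion set of the resulting edge via Lemma~\ref{l:build}. The only difference is presentational (the paper argues by contraposition, assuming the $k$ disjoint paths do not exist), so nothing further is needed.
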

\begin{proof}
Given $k \in \mathbb{N}$, $t,t'\in T$ and vertex sets $Z_1 \subseteq V_{t}$ and $Z_2 \subseteq V_{t'}$ with $|Z_1|=|Z_2|=k$, such that $G$ doesn't contain $k$ disjoint $Z_1$-$Z_2$ paths, we wish to show that there exists an edge $(s,s') \in tTt'$ with $|V_s \cap V_{s'}| < k$.

Since $Z_1 \subset V_{t}$ it follows that $\{(Z_1,V) \} \cup \sigma_{t}$ forms a star, and similarly so does $\{(Z_2,V) \} \cup \sigma_{t'}$. Since $\cc{F}$ is $\ra{S}$-stable, both of these stars are in $\cc{F}$, and so $(Z_1,V)$ is addable at $\sigma_{t}$ and $(Z_2,V)$ is addable at $\sigma_{t'}$. Since $G$ does not contain $k$ disjoint $Z_1$-$Z_2$ paths, it follows that $\lambda \left( (Z_1,V),(V,Z_2) \right) < k$. Hence, since $(T,\alpha)$ is $\cc{F}$-lean, there exists an edge $\ra{g} \in tTt'$ such that $|\alpha(\ra{g})|:= |X,Y| = \lambda \left( (Z_1,V),(V,Z_2) \right)$. As before let us write $\ra{g} = (s,s')$. Then $(s,s') \in tTt'$ and $|V_s \cap V_{s'}| = |X \cap Y| < k$, as claimed.
\end{proof}

Given a star $\sigma \in \N^{\ra{S}}$ let us write $n(\sigma)$ for the cardinality of the multiset $\sigma$. Diestel and Oum \cite{DO142} showed that for the families of stars
\[
\cc{F}_k := \{ \sigma \in \N^{{\ra{S}\!}_k} \, : \, \sigma \text{ a star, } \langle \sigma \rangle_r <k\},
\]
\[
\cc{P}_k  := \{ \sigma \subset \cc{F}_k \, : \, n(\sigma) \leq 2 \}
\]
and 
\begin{align*}
\cc{T}_k := &\{ \sigma \in \N^{{\ra{S}\!}_k} \, : \, \sigma \text{ a star}, \sigma = \{ (A_1,B_1), (A_2,B_2), (A_3,B_3) \} \text{ and } \bigcup_{i=1}^3 G[A_i] = G \} \\
&\cup  \{ \sigma \in \cc{F}_k \, : \, n(\sigma) = 1 \},
\end{align*}
the following is true:
\begin{itemize}
\item $G$ admits a tree-decomposition of width $<k-1$ if and only if there is an $S_k$-tree over $\cc{F}_k$,
\item $G$ admits a path-decomposition of width $<k-1$ if and only if there is an $S_k$-tree over $\cc{P}_k$,
\item $G$ admits a branch-decomposition of width $<k$ if and only if there is an $S_k$-tree over $\cc{T}_k$.
\end{itemize}
Furthermore, they showed that in every case the family of stars was fixed under shifting \footnote{Strictly they showed that a slightly different condition, which they called \emph{closed under shifting}, holds, however the same proof shows they are also fixed under shifting.}. Also, they showed that if $\theta \leq p$ then $\cc{F}^{\theta}_p := \cc{F}_p \cap \N^{{\ra{S}\!}_\theta}$ is fixed under shifting, and that
\begin{itemize}
\item $G$ has $\theta$-tree-width $<p-1$ if and only if there is an $S_\theta$-tree over $\cc{F}^{\theta}_p$,
\end{itemize}
where the $\theta$-tree-width of $G$ $\theta$-tw$(G)$ is defined, as in \cite{GJ16}, to be the smallest $p$ such that $G$ admits a tree-decomposition of width $p$ and adhesion $<\theta$.

\begin{defn}
A tree decomposition $(T,\cc{V})$ is called \emph{$\theta$-lean} if for all $k <\theta$, $t,t' \in T$ and vertex sets $Z_1 \subseteq V_{t_1}$ and $Z_2 \subseteq V_{t_2}$ with $|Z_1|=|Z_2|=k$, either $G$ contains $k$ disjoint $Z_1$-$Z_2$ paths or there exists an edge $(s,s') \in tTt'$ with $V_s \cap V_{s'} < k$.
\end{defn}

We note that by the same arguments as Lemmas \ref{l:stable} and \ref{l:leantree}, if an $S_\theta$-tree over $\cc{F}^{\theta}_p$ is $\cc{F}^{\theta}_p$-lean, then the corresponding tree-decomposition is $\theta$-lean. Let us write pw$(G)$, bw$(G)$, and $\theta$-tw$(G)$ for the path-, branch-, and $\theta$-tree-width of $G$ respectively. Applying Theorems \ref{t:link} and \ref{t:lean} to these families of stars gives the following theorem:

\begin{theorem}
Let $G$ be a graph then the following are true:
\begin{itemize}
\item $G$ admits a linked path-decomposition of width pw$(G)$;
\item $G$ admits a linked branch-decomposition of width bw$(G)$;
\item $G$ admits a lean tree-decomposition of width tw$(G)$;
\item $G$ admits a $\theta$-lean tree-decomposition of width $\theta$-tw$(G)$.
\end{itemize}
\end{theorem}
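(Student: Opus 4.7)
The strategy is entirely modular: each of the four bullets asserts a property of a tree/path/branch-decomposition of optimal width, and Diestel and Oum's correspondences (recalled immediately before the theorem) identify each such decomposition with an $S_k$-tree over an appropriate family $\cc{F} \in \{\cc{P}_k,\cc{T}_k,\cc{F}_k,\cc{F}^\theta_p\}$ of stars in the universe $\ra{S}$ of graph separations with order function $|X,Y|=|X\cap Y|$. Since this order function is induced by the non-negative non-decreasing submodular function $r(A)=|A|$, Lemma~\ref{l:un} shows $\ra{S}$ is grounded, so Theorems~\ref{t:link} and~\ref{t:lean} are available.

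For the first two bullets I would take $k=\text{pw}(G)+1$ (respectively $k=\text{bw}(G)+1$); by the Diestel--Oum characterization there exists an $S_k$-tree over $\cc{P}_k$ (resp.\ $\cc{T}_k$), and these families are fixed under shifting. Applying Theorem~\ref{t:link} yields a \emph{linked} such $S_k$-tree. For paths, an argument identical to Lemma~\ref{l:linktree} produces a linked path-decomposition of the required width; for branches, linkedness is already defined at the level of the $S_k$-tree (Definition~\ref{d:linksep}) and agrees with the standard notion via the Diestel--Oum correspondence, so we are done.

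For the last two bullets I would take $k=\text{tw}(G)+1$ with family $\cc{F}_k$ (resp.\ $p=\theta\text{-tw}(G)+1$ with family $\cc{F}^\theta_p$). Lemma~\ref{l:shift2} (and the analogous remark for $\cc{F}^\theta_p$) shows these are fixed under shifting, so Theorem~\ref{t:lean} yields an $\cc{F}$-lean $S_k$-tree over the family. Lemma~\ref{l:stable} gives that both families are $S$-stable, and so Lemma~\ref{l:leantree} translates $\cc{F}$-leanness into ordinary leanness of the associated tree-decomposition $(T,\cc{V})_\alpha$; the $\theta$-version is identical, bounded by the adhesion restriction built into $\cc{F}^\theta_p$. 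The width bound comes out right because $\langle\sigma_t\rangle_r = |\text{int}(\sigma_t)| = |V_t|$, so $\sigma_t \in \cc{F}_k$ exactly says $|V_t|-1<k-1$.

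The main obstacle is not an obstacle at all once the machinery is in place: it is purely a matter of checking that the various notions match up through the correspondence $(T,\alpha)\leftrightarrow (T,\cc{V})_\alpha$. The one point that deserves care is making sure that the "external connectivity" witness used in Definition~\ref{d:leansep} recovers Menger's theorem on the graph side; this is handled in the proof of Lemma~\ref{l:leantree} by adding the trivial separation $(Z_i,V)$, which is addable precisely because $\cc{F}_k$ (and $\cc{F}^\theta_p$) is $S$-stable. With that verification, all four statements reduce to a single invocation of either Theorem~\ref{t:link} or Theorem~\ref{t:lean}.
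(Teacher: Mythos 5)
Your proposal follows the paper's own route exactly: invoke the Diestel--Oum correspondences, apply Theorem~\ref{t:link} to $\cc{P}_k$ and $\cc{T}_k$ and Theorem~\ref{t:lean} to $\cc{F}_k$ and $\cc{F}^{\theta}_p$, then translate back through Lemmas~\ref{l:linktree}, \ref{l:stable} and \ref{l:leantree} (and their $\theta$-analogues), using Lemma~\ref{l:un} for groundedness. The only slip is an off-by-one in the parameter choices: since the correspondences give width $<k-1$ for $\cc{P}_k$, $\cc{F}_k$ and $\cc{F}^{\theta}_p$, you should take $k=\mathrm{pw}(G)+2$, $k=\mathrm{tw}(G)+2$ and $p=\theta\text{-tw}(G)+2$ respectively (only $k=\mathrm{bw}(G)+1$ is correct as stated), a purely cosmetic adjustment.
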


We note that none of these results are in essence new. We don't know if the first result appears explicitly in the literature anywhere, however it is implied by the corresponding result for directed path width [\cite{K14}, Theorem 7]. The second result is implied by a broader theorem [\cite{GGW02}, Theorem 2.1] on linked branch decompositions of submodular functions and the third is Thomas' theorem, Theorem \ref{t:thomlink} in this paper. The fourth is stated without proof in [\cite{CDHH14}, Theorem 2.3], and a slightly weaker result is claimed in [\cite{GJ16}, Theorem 3.1] although the proof in fact shows the stronger statement. 

As mentioned before, there is never an addable separation for any star $\sigma \in \cc{T}_k$, and so Theorem \ref{t:lean} give us no insight into branch decompositions. However, for path-decompositions it tells us something about the bags at the two leaves.

\begin{lemma}
Let $(T,\alpha)$ be an $\cc{P}_k$-lean $S_k$-tree over $\cc{P}_k$ and let $T$ be a path $t_0,t_1, \ldots, t_n$. Then $(T,\cc{V})_{\alpha}$ has the following properties:
\begin{itemize}
\item For all $Z_1, Z_2 \subset V_{t_0}$ or $Z_1,Z_2 \subset V_{t_n}$ with $|Z_1| = |Z_2|=r$ there are $r$-disjoint $Z_1-Z_2$ paths in $G$.
\item For all $Z_1 \subset V_{t_0}$ and $Z_2 \subset V_{t_n}$ with $|Z_1| = |Z_2|=r$ either there are $r$ disjoint $Z_1-Z_2$ paths in $G$ or there is an edge $(t_i,t_{i+1}) \in T$ with $|V_{t_i} \cap V_{t_{i+1}}| < r$.
\end{itemize}
\end{lemma}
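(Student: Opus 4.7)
The plan is to apply the $\cc{P}_k$-lean property to separations of the form $(Z,V)$ placed at the leaves $t_0$ and $t_n$ of the path. First I would observe that because $T$ is a path and $t_0,t_n$ are its endpoints, $\sigma_{t_0}$ and $\sigma_{t_n}$ are singletons, say $\sigma_{t_0} = \{(A_0,B_0)\}$, so by Lemma \ref{l:build} $V_{t_0} = B_0$ (and analogously $V_{t_n} = B_n$). For any $Z \subseteq V_{t_0}$, the multiset $\sigma_{t_0} \cup \{(Z,V)\}$ is readily checked to be a star of cardinality $2$ with $\langle \cdot \rangle_r = |B_0 \cap V| = |B_0| < k$, hence an element of $\cc{P}_k$; so $(Z,V)$ is addable at $t_0$, and symmetrically for subsets of $V_{t_n}$ at $t_n$.

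For the first bullet, consider $Z_1,Z_2 \subseteq V_{t_0}$ with $|Z_1| = |Z_2| = r$ (the $V_{t_n}$ case being symmetric). I would apply $\cc{P}_k$-leanness with $t = t' = t_0$ to the pair of addable separations $(Z_1,V) \leq (V,Z_2)$. Since the path $t_0 T t_0$ contains no edges, the second alternative of Definition \ref{d:leansep} is vacuous, forcing the first: $\lam{(Z_1,V),(V,Z_2)} \geq \min\{|Z_1|,|Z_2|\} = r$. Because $\lam{(Z_1,V),(V,Z_2)}$ is precisely the minimum of $|X \cap Y|$ over graph separations $(X,Y)$ with $Z_1 \subseteq X$ and $Z_2 \subseteq Y$, i.e.\ the minimum size of a $Z_1$-$Z_2$ separator in $G$, Menger's theorem immediately produces $r$ disjoint $Z_1$-$Z_2$ paths.

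For the second bullet, I would apply $\cc{P}_k$-leanness with $t = t_0$, $t' = t_n$ and the analogous pair $(Z_1,V) \leq (V,Z_2)$, both addable at their respective leaves. If $\lam{(Z_1,V),(V,Z_2)} \geq r$ then Menger again yields the desired $r$ disjoint paths; otherwise there is an edge $\ra{g} \in t_0 T t_n$ with $|\alpha(\ra{g})| = \lam{(Z_1,V),(V,Z_2)} < r$, and writing $\ra{g} = (t_i,t_{i+1})$, Lemma \ref{l:build} gives $|V_{t_i} \cap V_{t_{i+1}}| = |\alpha(t_i,t_{i+1})| < r$, as required.

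The argument is essentially a direct translation of $\cc{P}_k$-leanness into the language of Menger's theorem, so I do not anticipate any real obstacle. The only subtlety is the addability check, which works precisely because the stars at the leaves of a path have cardinality one, so adjoining the auxiliary separation $(Z,V)$ keeps us within the cardinality bound $n(\sigma) \leq 2$ defining $\cc{P}_k$.
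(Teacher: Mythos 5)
Your proposal is correct and follows essentially the same route as the paper: make $(Z_i,V)$ addable at the leaf stars (which are singletons, so adjoining one separation stays within $\cc{P}_k$), apply $\cc{P}_k$-leanness with $t=t'$ for the first bullet and $t=t_0$, $t'=t_n$ for the second, and translate the conclusion via Menger's theorem and Lemma \ref{l:build}. The paper only spells out the first bullet and leaves the second as ``similar'', whereas you carry it out explicitly; the only step you gloss over is the (trivial) check that $(Z_i,V)\in{\ra{S}\!}_k$, i.e.\ $|Z_i|\le|V_{t_0}|<k$, which the paper notes explicitly.
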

\begin{proof}
Let us show the first statement, the proof of the second is similar. Suppose $Z_1, Z_2 \subset V_{t_0}$. Since $Z_i \subset V_{t_0}$ it follows that $|Z_i| = r < k$ and so $(Z_i,V) \in {\ra{S}\!}_k$. Let $\alpha(t_1,t_0) = (A,B)$. Since $\{(A,B)\} \in \cc{P}_k$, $|B| <k$  and since $B = V_{t_0}$ and $Z_i \subseteq V_{t_0}$ both $(Z_i,V)$ are addable at $\{(A,B)\}$.

Therefore, since $(T,\alpha)$ is $\cc{P}_k$-lean, it follows that $\lam{(Z_0,V),(V,Z_1)} \geq \min \{ |Z_i| \} = r$. Hence $G$ contains $r$ disjoint $Z_1$-$Z_2$ paths.
\end{proof}

Not only is this result broad enough to imply many known theorems, the framework is also flexible enough to encompass many other types of tree-decompositions. For example, more recently, Diestel, Eberenz and Erde \cite{DEE15} showed that there exist families of stars $\cc{B}_k$ and $\cc{P}_k \subset \N^{{\ra{S}\!}_k}$, which are fixed under shifting, such that the existence of $S_k$-trees over $\cc{B}_k$ or $\cc{P}_k$ is dual to the existence of a $k$-block or $k$-profile in the graph respectively ($k$-blocks and $k$-profiles are examples of what Diestel and Oum call ``highly cohesive structures'' which represent obstructions to low width, see \cite{DO141}). They defined the \emph{profile-width} and \emph{block-width} of a graph $G$, which we denote by blw$(G)$ and prw$(G)$, to be the smallest $k$ such that there is an $S_k$-tree over $\cc{B}_k$ or $\cc{P}_k$ respectively. Again, applying Theorem \ref{t:link} to these families of stars we get the following theorem:

\begin{theorem}
Let $G$ be a graph then the following are true:
\begin{itemize}
\item $G$ admits a linked profile-decomposition of width prw$(G)$;
\item $G$ admits a linked block-decomposition of width blw$(G)$.
\end{itemize}
\end{theorem}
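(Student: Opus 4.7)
The plan is to apply Theorem \ref{t:link} directly to the families $\cc{B}_k$ and $\cc{P}_k$ and then transfer the resulting linked $S_k$-trees to linked profile- or block-decompositions via Lemma \ref{l:linktree}. By the definitions of blw$(G)$ and prw$(G)$, with $k=$ blw$(G)$ (respectively $k=$ prw$(G)$) there exists an $S_k$-tree over $\cc{B}_k$ (respectively $\cc{P}_k$); this is the starting input for Theorem \ref{t:link}.

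First I would verify that the hypotheses of Theorem \ref{t:link} are met. The universe $\ra{S}$ of separations of $G$ carries the order function $|X,Y|_r = |X \cap Y|$ coming from the non-negative non-decreasing submodular function $r(A) = |A|$, so by Lemma \ref{l:un} it is grounded. The families $\cc{B}_k, \cc{P}_k \subset \N^{{\ra{S}\!}_k}$ were shown in \cite{DEE15} to be fixed under shifting. Hence Theorem \ref{t:link} applies and yields a linked $S_k$-tree $(T,\alpha)$ over $\cc{B}_k$ (respectively $\cc{P}_k$).

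Next I would interpret this linked $S_k$-tree as a decomposition of $G$. Applying the construction of Lemma \ref{l:build} produces a tree-decomposition $(T,\cc{V})_{\alpha}$ whose bags are $V_t = \text{int}(\sigma_t)$, and by definition of blw (resp. prw) this is exactly what is meant by a block- (resp. profile-) decomposition of width $k-1$ in the convention where ``width'' is measured by $k$ rather than $|V_t|-1$ (as the ``width'' of these decompositions is simply the smallest $k$ witnessing an $S_k$-tree). Lemma \ref{l:linktree} then guarantees that the associated tree-decomposition inherits the linkedness property of $(T,\alpha)$ in the sense of Definition \ref{d:link2}; subdividing each edge $(s,s')$ and inserting $V_s \cap V_{s'}$ as a new bag (which does not affect the width since these sets have order strictly less than the adjacent bags) upgrades this to the form of Definition \ref{d:link}.

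There is no real obstacle: the theorem is essentially a corollary of Theorem \ref{t:link} combined with the duality results of \cite{DEE15} which package the profile- and block-widths as exactly the smallest $k$ admitting an $S_k$-tree over $\cc{P}_k$ or $\cc{B}_k$. The only mild technicality is confirming that Lemma \ref{l:linktree} continues to apply here, which it does since its proof only used that $(T,\alpha)$ is a tame $S_k$-tree for the universe of graph separations and did not rely on the specific family of stars being $\cc{F}_k$.
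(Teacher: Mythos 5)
Your proposal is correct and follows essentially the same route as the paper: the paper also obtains this theorem by noting that $\cc{B}_k$ and $\cc{P}_k$ are fixed under shifting (by \cite{DEE15}), that the universe of graph separations is grounded, and then applying Theorem \ref{t:link} to the $S_k$-trees witnessing blw$(G)$ and prw$(G)$. The only difference is cosmetic: the paper treats the linked $S_k$-tree itself as the ``linked block-/profile-decomposition,'' whereas you additionally pass to the associated tree-decomposition via Lemmas \ref{l:build} and \ref{l:linktree}, which is a harmless extra step.
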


The family $\cc{B}_k$ of stars is built from a family $\cc{B}^*_k$ of stars by iteratively taking all possible stars that appear as shifts of stars in $\cc{B}^*_k$, in order to guarantee that $\cc{B}_k$ is fixed under shifting. The set $\cc{B}^*_k$ can be taken to be stable, but it is not clear if this property is maintained when moving to $\cc{B}_k$. It would be interesting to know if a lean block-decomposition of width blw$(G)$ always exists.

\subsection{Matroid tree-width}\label{s:mat}
Hlin\v{e}n\'{y} and Whittle \cite{HW06,HW09} generalized the notion of tree-width from graphs to matroids. Let $M=(E,I)$ be a matroid with rank function $r$. Hlin\v{e}n\'{y} and Whittle defined a tree-decomposition of $M$ to be a pair $(T,\tau)$ where $\tau : E \rightarrow V(T)$ is an arbitrary map. Every vertex $v \in V(T)$ separates the tree into a number of components $T_1,T_2,\ldots,T_d$ and we define the \emph{width} of the bag $\langle \tau^{-1}(v) \rangle$ to be 
\[
\sum_{i=1}^d r(E \setminus \tau^{-1}(T_i)) - (d-1).r(E).
\]
The \emph{width} of a tree-decomposition is $\max \{ \langle \tau^{-1}(v) \rangle \, : \, v \in V(T) \}$ and the \emph{tree-width} of $M$ is the smallest $k$ such that $M$ has a tree-decomposition of width $k$. This is a generalisation of the tree-width of graphs, and in particular Hlin\v{e}n\'{y} and Whittle showed that for any graph $G$ with at least one edge, if $M(G)$ is the cycle matroid of $G$ then the tree-width of $G$ is the tree width of $M(G)$.

We can express their notion of a tree-decomposition of a matroid in the language of $S_k$-trees in the following way. Given any $X \subset E$ the \emph{connectivity} of $X$ is given by
\[
\lambda(X) := r(X) + r(E \setminus X) - r(M),
\]
where $r$ is the rank function of $M$.

If we consider the universe of separations $\ra{S}$ given by the bipartitions of $E$, that is, pairs of the form $(X,E\setminus X)$, it follows that $|X,E\setminus X|_r = \lambda(X)$ is an order function on $\ra{S}$.

Let us define, as before
\[
\cc{F}_k = \{ \sigma \in \N^{{\ra{S}\!}_k} \, : \ \sigma \text{ a star with }\langle \sigma \rangle < k \}.
\]
Diestel and Oum [Lemma 8.4 \cite{DO142}] showed that $M$ has tree-width $<k$ in the sense of Hlin\v{e}n\'{y} and Whittle if and only if there is an $S_k$-tree over $\cc{F}_k$. Explicitly given an $S_k$-tree $(T,\alpha)$ there is a natural map $\tau : E \rightarrow T$ where $\tau(e) = v$ if and only if $e \in B$ for all $(A,B) \in \alpha({\ra{F}\!}_v)$. Conversely given a tree-decomposition  $(T,\tau)$ and an edge $\ra{f} = (t_1,t_2)$ consider the two subtrees $T_1$ and $T_2$ consisting of the component of $T - t_2$ containing $t_1$ and the component of $T-t_1$ containing $t_2$ respectively. We can then define $\alpha(\ra{f}) = \left(\tau^{-1}\left(V(T_1)\right), \tau^{-1}\left( V(T_2)\right)\right)$. In this way we get an equivalence between $S_k$-trees and matroid tree-decompositions, and it is easy to check that the width of a bag $\langle \tau^{-1}(v) \rangle = \langle \sigma_v \rangle_r$.

We first note that Azzuto \cite{A11} showed that Theorem \ref{t:branch} implies the following:
\begin{theorem}
Every matroid has a linked tree-decomposition of width at most $2$tw$(M)$.
\end{theorem}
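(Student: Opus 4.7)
The plan is to deduce this from Theorem \ref{t:branch} by passing from a linked branch-decomposition of $M$ to an associated tree-decomposition and carefully bounding the resulting bag widths. First I would observe that the connectivity function $\lambda_M : 2^E \to \mathbb{N}$ defined by $\lambda_M(X) = r(X) + r(E \setminus X) - r(E)$ is an integer-valued, non-negative, symmetric, submodular function on $E$ whose branch-width, in the sense of \cite{GGW02}, agrees with the matroid branch-width $\text{bw}(M)$. Applying Theorem \ref{t:branch} to $\lambda_M$ produces a linked branch-decomposition $(T,\tau)$ of $M$ of width $\text{bw}(M)$: a cubic tree $T$ with leaves in bijection with $E$, in which every edge induces a bipartition of $E$ of connectivity at most $\text{bw}(M)$, and such that the minimum connectivity along any path in $T$ is realised by an edge of that path.

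Next I would reinterpret $(T,\tau)$ verbatim as a Hliněný--Whittle tree-decomposition of $M$. At an internal vertex $v$ of degree three, with the three incident subtrees inducing a partition $A_1 \sqcup A_2 \sqcup A_3 = E$, the bag width simplifies to
\[
\langle \tau^{-1}(v) \rangle = \sum_{i=1}^3 r(E \setminus A_i) - 2r(E) = \sum_{i=1}^3 \lambda_M(A_i) + r(E) - \sum_{i=1}^3 r(A_i).
\]
The key estimate is that, for each choice of $\{i,j,k\} = \{1,2,3\}$, subadditivity of $r$ applied to the disjoint $A_j, A_k$ combined with the identity $r(A_j \cup A_k) = \lambda_M(A_i) + r(E) - r(A_i)$ yields $r(A_i) + r(A_j) + r(A_k) \geq \lambda_M(A_i) + r(E)$, and hence $\sum_i r(A_i) \geq \max_i \lambda_M(A_i) + r(E)$. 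Substituting back, the bag width is bounded by $\sum_i \lambda_M(A_i) - \max_i \lambda_M(A_i) \leq 2\max_i \lambda_M(A_i) \leq 2\,\text{bw}(M)$.

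Linkedness of the resulting tree-decomposition transfers immediately, since the separations assigned to edges of $T$ are unchanged from the branch-decomposition: by the linked property of Theorem \ref{t:branch}, the minimum connectivity between any two bags is realised by an edge-separation on the path joining them. Combining the bag-width bound $2\,\text{bw}(M)$ with the standard matroid inequality between branch-width and tree-width (as in \cite{HW06}) then yields the desired linked tree-decomposition of width at most $2\,\text{tw}(M)$. The main obstacle in this plan is the bag-width calculation at internal vertices: the naive estimate is $3\,\text{bw}(M)$, and sharpening it to $2\,\text{bw}(M)$ requires the subadditive inequality $\sum_i r(A_i) \geq r(E) + \max_i \lambda_M(A_i)$ derived above from subadditivity applied jointly with the branch-decomposition identity.
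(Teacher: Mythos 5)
Your argument is correct and is essentially the deduction the paper attributes to Azzato \cite{A11}: the paper offers no proof of its own beyond noting that Theorem \ref{t:branch} implies the statement, and your route --- reinterpret a linked branch-decomposition as a Hlin\v{e}n\'{y}--Whittle tree-decomposition, bound each internal bag by $\sum_i\lambda_M(A_i)-\max_i\lambda_M(A_i)\le 2\max_i\lambda_M(A_i)$ via subadditivity, and note that linkedness transfers because the edge separations are unchanged --- is exactly that deduction. The one point to watch is the branch-width convention: the GGW-width of $\lambda_M$ (the maximum displayed connectivity) is $\mathrm{bw}(M)-1$ in the usual matroid convention, so the inequality you need from \cite{HW06} is $\mathrm{bw}(M)-1\le \mathrm{tw}(M)$; used consistently this yields the claimed bound $2\,\mathrm{tw}(M)$, whereas mixing the two conventions would only give $2\,\mathrm{tw}(M)+2$.
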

It is a simple corollary of Theorem \ref{t:link} that this bound can be improved to the best possible.
\begin{cor}
Every matroid has a linked tree-decomposition of width at most tw$(M)$.
\end{cor}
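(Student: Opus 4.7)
The plan is to derive the corollary as a direct application of Theorem \ref{t:link} to the matroid set-up introduced in the paragraphs preceding the statement. I would take $\ra{S}$ to be the universe of bipartitions $(X,E\setminus X)$ of the ground set $E$ of $M$, equipped with the order function $|X,E\setminus X|_r = \lambda(X) = r(X)+r(E\setminus X)-r(M)$, where $r$ is the rank function of $M$, and let $\cc{F}_k$ be the family of stars $\sigma$ with $\langle \sigma \rangle_r < k$ already defined. The entire proof then amounts to checking the three hypotheses of Theorem \ref{t:link} and interpreting the output.

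For the hypotheses: $\ra{S}$ is grounded because, as noted immediately before Theorem \ref{t:link}, any universe of bipartitions is automatically grounded with respect to any symmetric submodular order function, and $\lambda$ is such. The family $\cc{F}_k$ is fixed under shifting by Lemma \ref{l:shift2}, since the matroid rank function $r$ is non-negative, non-decreasing, and submodular on $2^E$. Finally, setting $k := \text{tw}(M)+1$, the equivalence of Diestel and Oum [Lemma 8.4, \cite{DO142}] quoted immediately above the corollary produces an $S_k$-tree over $\cc{F}_k$ from any tree-decomposition of $M$ of width $\text{tw}(M)$.

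Theorem \ref{t:link} then yields a linked $S_k$-tree $(T,\alpha)$ over $\cc{F}_k$. Under the bijection between $S_k$-trees over $\cc{F}_k$ and matroid tree-decompositions, this corresponds to a tree-decomposition $(T,\tau)$ whose bag at each $v$ has width $\langle \sigma_v \rangle_r < k = \text{tw}(M)+1$, i.e.\ at most $\text{tw}(M)$. The linkedness condition of Definition \ref{d:linksep} --- that for all $\ra{e}\leq\ra{f}$, $\min\{|\alpha(\ra{g})|:\ra{e}\leq\ra{g}\leq\ra{f}\}=\lam{\alpha(\ra{e}),\alpha(\ra{f})}$ --- is precisely the matroid analogue of the Geelen--Gerards--Whittle notion of linkedness transported to tree-decompositions, so $(T,\tau)$ is linked in the desired sense. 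Since the argument is a direct invocation there is no real obstacle; all the difficulty has already been absorbed into Theorem \ref{t:link} and into establishing the matroid framework. The only delicate point is to confirm that the matroid rank function really satisfies the hypotheses of Lemma \ref{l:shift2} (it does, trivially), which is why the linked version comes out as a corollary whereas the lean analogue in Theorem \ref{t:matroid} requires the stronger machinery of Theorem \ref{t:lean}.
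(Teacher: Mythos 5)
Your proposal is correct and is essentially the paper's own argument: the paper likewise obtains the corollary by applying Theorem \ref{t:link} to the universe of bipartitions of $E$ with the order function $\lambda$ given by the rank function, using Lemma \ref{l:shift2} for the fixed-under-shifting hypothesis and the Diestel--Oum equivalence between $S_k$-trees over $\cc{F}_k$ and matroid tree-decompositions to translate back and forth. Your verification of groundedness and of the width bound via $\langle \sigma_v \rangle_r < k = \text{tw}(M)+1$ matches the intended routine check, so there is nothing to add.
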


However we can also apply Theorem $\ref{t:lean}$ to give us a generalization of Theorem \ref{t:thomlink} to matroids. If we wish to express this in the framework of Hlin\v{e}n\'{y} and Whittle we could make the following definition. Given disjoint subsets $Z_1, Z_2 \subseteq E$ let us write 
\[
\lambda(Z_1,Z_2):= \min \{ \lambda(X) \,:\, Z_1 \subseteq X, Z_2 \subseteq E \setminus X \}.
\]

\begin{defn}\label{d:leanmat}
A matroid tree decomposition $(T,\tau)$ is called \emph{lean} if for all $k \in \bb{N}$, $t,t' \in T$ and subsets $Z_1 \subseteq \tau^{-1}(t)$ and $Z_2 \subseteq \tau^{-1}(t')$ with $r(Z_1)=r(Z_2)=k$, either $\lambda(Z_1,Z_2) \geq k$ or there exists an edge $(s,s') \in tTt'$ such that $\left(\tau^{-1}\left(V(T_1)\right), \tau^{-1}\left( V(T_2)\right)\right)$ is a $<k$ separation.
\end{defn}

It is a simple argument in the vein of Lemmas \ref{l:stable} and \ref{l:leantree} that if an $S_k$-tree over $\cc{F}_k$ is $\cc{F}_k$-lean then the associated matroid tree-decomposition is lean. Then, Theorem $\ref{t:lean}$ applied to $\cc{F}_k$ gives us the following generalization of Theorem \ref{t:thomlink} to matroids, the main new result in this paper.

\begin{theorem}\label{t:matroid}
Every matroid $M$ admits a lean tree-decomposition of width tw$(M)$.
\end{theorem}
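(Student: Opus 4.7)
The plan is to derive Theorem \ref{t:matroid} as a direct specialization of Theorem \ref{t:lean}, exactly paralleling the graph argument via Lemmas \ref{l:stable} and \ref{l:leantree}. Take $\ra{S}$ to be the universe of matroid separations, i.e.\ bipartitions $(X, E \setminus X)$ of the ground set, with order function $|X, E \setminus X|_r = r(X) + r(E \setminus X) - r(M)$ induced by the rank function $r$, which is non-negative, non-decreasing and submodular. Let $k = \text{tw}(M) + 1$ and let $\cc{F}_k = \{ \sigma \in \N^{\ra{S}} : \sigma \text{ a star}, \langle \sigma \rangle_r < k \}$. By the equivalence of Diestel and Oum recorded in the excerpt [Lemma 8.4, \cite{DO142}], $M$ admits an $S_k$-tree over $\cc{F}_k$. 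Since $\cc{F}_k$ is fixed under shifting by Lemma \ref{l:shift2}, Theorem \ref{t:lean} delivers an $\cc{F}_k$-lean $S_k$-tree $(T, \alpha)$ over $\cc{F}_k$, whose associated matroid tree-decomposition $(T, \tau)$ has width at most $\text{tw}(M)$.

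What remains is to show that $\cc{F}_k$-leanness of $(T, \alpha)$ translates into leanness of $(T, \tau)$ in the sense of Definition \ref{d:leanmat}. Given $t, t' \in V(T)$ and disjoint subsets $Z_1 \subseteq \tau^{-1}(t)$, $Z_2 \subseteq \tau^{-1}(t')$ with $r(Z_1) = r(Z_2) = k'$, I would feed the separations $(A, B) = (Z_1, E \setminus Z_1)$ and $(A', B') = (Z_2, E \setminus Z_2)$ into the $\cc{F}_k$-lean hypothesis. One has $(A, B) \leq (B', A')$ since $Z_1 \cap Z_2 = \emptyset$, and unpacking the infimum gives
\[
\lam{(Z_1, E \setminus Z_1), (E \setminus Z_2, Z_2)} = \min\{ \lambda(X) : Z_1 \subseteq X \subseteq E \setminus Z_2 \} = \lambda(Z_1, Z_2).
\]
Addability at $t$ holds because $Z_1 \subseteq \tau^{-1}(t) = \bigcap_i B_i$, where $\sigma_t = \{(A_i, B_i)\}_i$, forces $\sigma_t \cup \{(Z_1, E \setminus Z_1)\}$ to be a star, and then Lemma \ref{l:stable} guarantees that this extended star is still in $\cc{F}_k$; symmetrically for $t'$. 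The $\cc{F}_k$-lean conclusion then reads either $\lambda(Z_1, Z_2) \geq \min\{r(Z_1), r(Z_2)\} = k'$, or some edge on $tTt'$ has image under $\alpha$ of order $\lambda(Z_1, Z_2) < k'$, which is exactly the alternative required by Definition \ref{d:leanmat}.

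The one bookkeeping check is that addability implicitly requires $(Z_i, E \setminus Z_i) \in {\ra{S}\!}_k$, i.e.\ $\lambda(Z_i) < k$. I would deduce this from the chain $r(Z_i) \leq r(\tau^{-1}(t)) = r(\bigcap_j B_j) \leq \langle \sigma_t \rangle_r \leq \text{tw}(M)$, where the middle inequality is Claim \ref{c:starint} applied to $B_1, \ldots, B_n, V$ in the star $\sigma_t$, combined with $\lambda(Z_i) \leq r(Z_i)$, which follows from $r(E \setminus Z_i) \leq r(E)$. With this in hand, all the machinery of Theorem \ref{t:lean} applies directly. The hard work lies entirely in Theorem \ref{t:lean}; the present theorem is its matroid instantiation, and I anticipate no substantive obstacle in the translation layer beyond the submodular bookkeeping just described.
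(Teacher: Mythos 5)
Your proposal is correct and takes essentially the same route as the paper: pass to the universe of bipartitions of $E$ with the order function induced by the rank function, invoke the Diestel--Oum equivalence between matroid tree-decompositions of width $<k$ and $S_k$-trees over $\cc{F}_k$, apply Theorem \ref{t:lean} (using Lemma \ref{l:shift2} for fixedness under shifting), and translate $\cc{F}_k$-leanness into Definition \ref{d:leanmat} via the addability/$S$-stability argument in the spirit of Lemmas \ref{l:stable} and \ref{l:leantree}. Your bookkeeping step $\lambda(Z_i)\leq r(Z_i)\leq r(\tau^{-1}(t))\leq\langle\sigma_t\rangle_r<k$ is exactly the check the paper leaves implicit, so nothing is missing.
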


We note that a non-negative non-decreasing submodular function $r : 2^V \rightarrow \N$ is, if normalised such that $r(\emptyset) =0$, a polymatroid set function. So, in the broadest generality our results can be interpreted in terms of tree-decompositions of polymatroidal set functions. However, we are not aware of any references in the literature to such tree-decompositions.

\section*{Acknowledgement}
The author would like to thank Nathan Bowler for useful contributions to the proof of Theorem \ref{t:lean}.

\bibliographystyle{plain}
\bibliography{linked}
\end{document}